\def\highlightchangessince{99}
\def\otherhighlightchangessince{99}
\def\otherotherhighlightchangessince{99}
\newcommand{\reals}{\mathbb{R}}
\DeclareMathOperator{\logit}{logit}
\DeclareMathOperator{\inversegamma}{InverseGamma}
\DeclareMathOperator{\inversegaussian}{InverseGaussian}
\DeclareMathOperator{\betad}{Beta}
\DeclareMathOperator{\betaprime}{BetaPrime}
\DeclareMathOperator{\negbin}{NegBin}
\DeclareMathOperator{\betanegbin}{BetaNegBin}
\DeclareMathOperator{\gammad}{Gamma}
\DeclareMathOperator{\bin}{Bin}
\DeclareMathOperator{\expd}{Exp}
\DeclareMathOperator{\var}{Var}
\DeclareMathOperator{\cov}{Cov}
\DeclareMathOperator{\corr}{Corr}
\DeclareMathOperator{\tv}{TV}
\DeclareMathOperator{\tr}{tr}
\DeclareMathOperator{\diag}{Diag}
\DeclareMathOperator{\iid}{iid\,}
\DeclareMathOperator{\ind}{ind.\,}
\DeclareMathOperator{\pa}{pa}
\DeclareMathOperator{\fa}{fa}
\DeclareMathOperator{\eb}{EB}
\theoremstyle{plain}
\newtheorem{thm}{Theorem}
\newtheorem{lem}[thm]{Lemma}
\newtheorem{cor}[thm]{Corollary}
\theoremstyle{definition}
\newtheorem{assum}[thm]{Assumption}
\newtheorem{ex}[thm]{Example}
\newtheorem*{rem}{Remark}
\numberwithin{thm}{section}
\numberwithin{equation}{section}
\let\originalepsilon\epsilon
\renewcommand{\epsilon}{\varepsilon}
\newcommand{\given}{\;\middle\vert\;}
\let\originalleft\left
\let\originalright\right
\renewcommand{\left}{\mathopen{}\mathclose\bgroup\originalleft}
\renewcommand{\right}{\aftergroup\egroup\originalright}
\def\bas#1\eas{\begin{align*}#1\end{align*}}
\def\basn#1\easn{\begin{align}#1\end{align}}
\definecolor{changehighlightcolor}{RGB}{0,170,255}
\definecolor{changeotherhighlightcolor}{RGB}{204,136,0}
\definecolor{changeotherotherhighlightcolor}{RGB}{170,0,85}
\def\bc#1#2\ec#3{\ifnum#1>\highlightchangessince
\ifnum#1>\otherhighlightchangessince\ifnum#1>\otherotherhighlightchangessince\textcolor{changeotherotherhighlightcolor}{\ignorespaces#2\unskip}\else\textcolor{changeotherhighlightcolor}{\ignorespaces#2\unskip}\fi\else\textcolor{changehighlightcolor}{\ignorespaces#2\unskip}\fi\else\ignorespaces#2\unskip\fi}
\newcommand{\marginparc}[2]{\ignorespaces\ifnum#1>\highlightchangessince\marginpar{\vspace*{-\baselineskip}\raggedright\singlespacing\footnotesize\textsl{\textcolor{changehighlightcolor}{$\star$~#2}}}\fi\ignorespaces}
\begin{document}

\onehalfspacing

\title{%
MCMC-Based Inference in the Era of Big Data: A~Fundamental Analysis of the Convergence Complexity of High-Dimensional Chains
\author{Bala Rajaratnam and Doug Sparks\\
Stanford University
}
}
\maketitle

\begin{abstract}\noindent
Markov chain Monte Carlo (MCMC) lies at the core of modern Bayesian methodology, much of which would be impossible without it. Thus, the convergence properties of MCMCs have received significant attention, and in particular, proving (geometric) ergodicity is of critical interest. Trust in the ability of MCMCs to sample from modern-day high-dimensional posteriors, however, has been limited by a widespread perception that chains typically experience serious convergence problems in such regimes. Though there may be a good practical understanding of convergence problems (and the associated role of priors) in some settings, a clear theoretical characterization of these problems is not available. Current methods for obtaining convergence rates of such MCMCs typically proceed as if the dimension of the parameter, $p$, and sample size, $n$, are fixed. In this paper, we first demonstrate that contemporary methods have serious limitations when the dimension grows. We then propose a framework for rigorously establishing the convergence behavior of commonly used high-dimensional  MCMCs. In particular, we demonstrate theoretically the precise nature and severity of the convergence problems of popular MCMCs when implemented in high dimensions, including phase transitions in the convergence rates in various $n$~and~$p$ regimes. We then proceed to show a universality result for the convergence rate of MCMCs across an entire spectrum of models. We also show that convergence problems in some important models effectively eliminate the apparent safeguard of geometric ergodicity. We then demonstrate theoretical principles by which MCMCs can be constructed and analyzed to yield bounded geometric convergence rates (essentially recovering geometric ergodicity) even as the dimension~$p$ grows without bound. Additionally, we propose a diagnostic tool for establishing convergence (or the lack thereof) for high-dimensional MCMCs.
\end{abstract}

\thispagestyle{empty}

\setcounter{page}{0}

\section{Introduction}
\label{sec:intro}

Markov chain Monte Carlo (MCMC) is an indispensable tool that has enabled much of modern Bayesian inference, and advances in MCMC have revolutionized Bayesian methodology in recent decades \citep[see][for an overview]{diaconis2009}.
The rise of MCMC has been aided by the steady increase in computing capabilities, which has enabled many complex and sophisticated MCMC techniques.
Thus, modern MCMC allows consideration of Bayesian posteriors for which no closed-form inferential solutions can be obtained. The applicability of MCMC to a wide range of problems has enabled an ``honest exploration" of the Bayesian posterior \citep{jones2001}.

An enormous amount of effort has been invested in establishing convergence properties of Markov chains. The basic question that most such work seeks to answer is the question of how long the chain must be run in order to approximate posterior quantities of interest to a desired precision. To this end, a primary goal is typically to show that chains arising in commonly used Bayesian methods are geometrically ergodic. A general approach 
for establishing geometric ergodicity
was provided by \citet{rosenthal1995}, and many subsequent results have been based on the method that \citeauthor{rosenthal1995} laid out. Further details can be found in the work of \citet{meyn1993}, \citet{gilks1995}, \citet{jones2001}, \citet{flegal2008}, and the references therein.

Modern high-dimensional settings have created new challenges when considering the limiting properties of inferential procedures.  Statistical theory has traditionally considered regimes in which the sample size~$n$ is large and the number of parameters~$p$ is small. However, there is now much interest in so-called ``small~$n$, large~$p$'' or ``large~$n$, large~$p$'' settings, and considerable advances have been made toward asymptotics in various sample complexity regimes \citep[see, e.g.,][for an overview]{hero2015,hero2016}. Bayesian inference enjoys certain advantages in such high-dimensional settings. Bayesian procedures often yield natural ways to undertake regularization and provide straightforward quantification of uncertainty. For both Bayesian and frequentist inference, substantial attention has been paid to two different types of complexity in high-dimensional regimes.  The first type, computational complexity, considers the computing time and resources that are required for the execution of an inferential algorithm.  The second type, sample complexity, deals with the fundamental ability to recover an underlying signal in various $n$ and $p$ regimes. However, a third type of complexity is also of vital importance for modern MCMC schemes involving large numbers of parameters.  This concept, which we call \emph{convergence complexity}, is an issue that is unique to Bayesian inference. More precisely,  convergence complexity considers the ability of an MCMC scheme to draw samples from the posterior, and how the ability to do so changes as the dimension of the parameter set grows. Although MCMC is perhaps the most important tool of modern Bayesian inference, to our knowledge a systematic theoretical treatment of the convergence complexity of modern Markov chains in various $n$ and~$p$ regimes is not available. 

The need for such an investigation also stems from the perceived scalability (or lack thereof) of Bayesian inferential methods to modern high-dimensional settings. It is well understood that approaches based on $\ell_1$ or lasso regularization have enabled frequentist approaches to be scaled to
high-dimensional
settings. However,
despite heroic
efforts from the MCMC community, there is a still a widely held perception that scaling MCMCs to modern high-dimensional settings is simply not feasible. The end result is that the benefits of posterior inference are lost (especially the ability to readily quantify uncertainty). Having said this, there is however a general understanding among practitioners that scaling classical MCMCs to very high dimensions can be problematic and that prior specification can play a role in convergence issues.
Thus, we believe that a general framework for undertaking a
theoretical analysis of high-dimensional MCMCs in various $n$ and~$p$ regimes is long 
overdue, since
it is vital to understand the effectiveness of using MCMCs as a tool to draw from high-dimensional posteriors.

In this paper, we undertake a detailed investigation of the convergence complexity of modern
MCMCs
that form the basis of more sophisticated models in many applications%
\setcitestyle{notesep={; }}%
\citep[see][and other standard Bayesian texts for concrete examples]{gelman2013,ohagan2010}.%
\setcitestyle{notesep={, }}%
Specifically, we first study Markov chains associated with a Bayesian analysis of the standard regression model and extensions thereof. These extensions include the Bayesian lasso, the Bayesian elastic net, and the spike-and-slab approach. We demonstrate that for Markov chains associated with standard regression-type models (and extensions thereof), the apparent theoretical safeguard of geometric ergodicity is merely an illusion if the dimension~$p$ grows faster than the sample size~$n$. More precisely, although the chain is indeed geometrically ergodic for any fixed $n$ and~$p$, we show that the rate constant~$r\equiv r_{n,p}$ tends to~$1$ if $p$~grows faster than~$n$. Thus, the convergence of these Markov chains may still be quite slow in modern high-dimensional settings. Our results also carry over directly to graphical models. We then contrast this convergence complexity with that of chains of other popular models, including the class of hierarchical models and the multivariate mean model. 
We demonstrate that
fortunately and contrary to perception,
convergence behavior seen in high-dimensional regression models is not inherent to many commonly used high-dimensional Markov chains. Though it is not possible to analyze all models and various prior specifications, the spectrum of models we do consider gives general and compelling insights into convergence behavior.

In all the models we consider, we are able to obtain exact or sharp convergence rates for various Markov chains using novel technical approaches. The significance of doing so is better understood by first recognizing that establishing geometric ergodicity itself is considered a challenging task
and
is often undertaken on a case-by-case basis for various MCMCs. Thus we believe that the ability to obtain sharp results for the geometric convergence rate in terms of $n$ and $p$ constitutes a significant step forward
in understanding
the behavior of high-dimensional MCMCs. 
It also simultaneously delivers novel theoretical methods for deriving such convergence rates.

The remainder of the paper is organized as follows. Section~\ref{sec:preliminaries} contains a discussion of known results for Markov chains and considers these results in high-dimensional settings. Section~\ref{sec:rr} provides a rigorous consideration of high-dimensional convergence problems in the Bayesian regression framework. In Section~\ref{sec:lasso}, we investigate
extensions of standard Bayesian regression, including
the Bayesian lasso, Bayesian elastic net, and spike-and-slab regression.
In Section~\ref{sec:mean} we consider the multivariate Gaussian mean model. In Section~\ref{sec:nh} we consider normal hierarchical models with known and unknown variances.
Additionally, we propose a diagnostic tool for assessing convergence in various $n$ and~$p$ regimes. 
Section~\ref{sec:ir} demonstrates how convergence rates that are uniformly bounded away from~$1$ may be obtained theoretically for high-dimensional Markov chains. Further discussion and conclusions are presented in Section~\ref{sec:conclusion}.

\section{Preliminaries}
\label{sec:preliminaries}

In this section, we present some preliminary results on the behavior of Markov chains.
First, we review notions of Markov chain convergence and associated convergence rates, along with methods by which such properties can be rigorously established.
We then consider Gibbs sampling and relevant properties of the joint and marginal chains that arise from such schemes.
Next, we discuss the role of autocorrelation in Gibbs sampling and its relationship to a chain's overall convergence behavior.
Finally, we introduce the
concept
of \emph{convergence complexity},
by which we mean the 
dependence of the chain's geometric convergence rate~$r$ on
the sample size~$n$ and the dimension of the parameter~$p$.
We introduce examples to illustrate this
concept
and to motivate the work in the remainder of the paper.

\subsection{Convergence Rates and Geometric Ergodicity}

The
total variation distance between two probability measures $P$ and~$Q$ defined on the same $\sigma$-algebra~$\mathcal F$ is
$d_{\tv}(P,Q)=\sup_{A\in\mathcal F}\,\left|P(A)-Q(A)\right|$.
%
In terms of Markov chains, if $P^k_{\bm x_0}$ denotes the distribution of the $k$th iterate of
a Markov
chain with starting point~$\bm x_0$ and $\Pi$ denotes the chain's stationary distribution (i.e., the target posterior), then we are typically interested in 
$d_{\tv}(P^k_{\bm x_0},\Pi)$.
It is typically desirable for the distance
$d_{\tv}(P^k_{\bm x_0},\Pi)$
to converge to zero at a geometric rate, i.e., that there exist $M_{\bm x_0}>0$ and $0<r<1$ such that
\begin{align}
d_{\tv}(P^k_{\bm x_0},\Pi)
\le M_{\bm x_0} r^k
\label{ge}
\end{align}
for every $k\ge1$.
When such constants exist
(and provided certain other regularity conditions hold), the Markov chain is said to be \emph{geometrically ergodic}.

An active area of current research is the establishment of geometric ergodicity for Markov chains
commonly
used
in applied Bayesian statistics.
Rigorous proofs of such results can be challenging to obtain, and different models and sampling schemes must often be handled on a case-by-case basis
(see in particular the rich array of results established by the work of J.~Hobert and co-authors).
Although a variety of methods may be used to prove geometric ergodicity \citep[see, e.g.,][]{meyn1993}, these methods often
establish the
\emph{existence} of a constant $0<r<1$
satisfying the geometric bound in~(\ref{ge}).
More sophisticated techniques are typically needed to find
quantitative 
bounds on the geometric convergence rate.
The most widely employed approach for
finding such bounds
has been the method set forth by \citet{rosenthal1995}.  This method proceeds by establishing a \emph{drift condition} and an associated \emph{minorization condition} for the Markov chain in question.
Let $(\bm X_k:k\ge0)$ be a Markov chain with state space $\mathcal X\subseteq\reals^p$ and associated Borel $\sigma$-algebra~$\mathcal B$.  We assume the Markov chain satisfies certain regularity conditions, e.g., those of \citet{jones2001}.
Let $P_{\bm x}$ denote its transition kernel, i.e., $P_{\bm x}(A)$ is the probability that $\bm X_{i+1}\in A{
{}\in\mathcal B
}$ given that $\bm X_i=\bm x$.  Let $\Pi$ denote the stationary distribution of the chain.
The chain satisfies a drift condition if there exist a function $V:\mathcal X\to[0,\infty)$ and constants $0<\lambda<1$ and $b<\infty$ such that
\basn
\int V\,dP_{\bm x}=E\left[V\left(\bm X_{i+1}\right)\mid\bm X_i=\bm x\right]\le\lambda\, V(\bm x)+b
\quad\text{ for all }\bm x\in\mathcal X.
\label{drift}
\easn
The chain satisfies a minorization condition if there exist a probability measure~$Q$ on~$\mathcal B$, a set $C$ with $\Pi(C)>0$, and a constant $\epsilon>0$ such that
\basn
P_{\bm x}(A)\ge\epsilon\,Q(A)\quad\text{ for all }\bm x\in C\text{ and all }A\in\mathcal B.
\label{minorization}
\easn
The establishment of geometric ergodicity requires that the set~$C$ be chosen specifically as $C=\{\bm x\in\mathcal X: V(\bm x)\le d\}$ for some $d>2b/(1-\lambda)$.  \citet{jones2001} provide an accessible conceptual discussion of the connections between these conditions and geometric ergodicity.

The convergence rates of Markov chains can also be investigated using tools and techniques from functional analysis.%
\setcitestyle{notesep={; }}%
\citep[See][and the references therein for further details.]{liu1994w,liu1994l}%
\setcitestyle{notesep={, }}%
Let $\mathcal X$ denote the state space of a Markov chain $(\bm X_k:k\ge0)$ with stationary distribution~$\Pi$, and let $L_0^2(\Pi)$ denote the space of all functions $h:\mathcal X\to\reals$ such that $E[h(\bm X)]=0$ and $\var[h(\bm X)]<\infty$
where
$\bm X\sim\Pi$.
For any function $g\in L_0^2(\Pi)$, its norm $\|g\|$ is defined as the square root of $\|g\|^2=E\left\{\left[g(\bm X)\right]^2\right\}$ with $\bm X\sim\Pi$.
Now define
the
\emph{forward operator}~$\bm F$ mapping $L_0^2(\Pi)$ to itself by
\bas
{
\bm Fg(\bm x)=E\left[g(\bm X_1)\mid \bm X_0=\bm x\right].
}
\eas
The norm of the operator $\bm F$ is defined as
$\|\bm F\|=\sup_{\|g\|=1}\|\bm Fg\|$,
and its spectral radius is
$r_{\bm F}=\lim_{k\to\infty}\|\bm F^k\|^{1/k}$%
,
noting that the $k$-step forward operator $\bm F^k$ is simply
$\bm F^k g(\bm x)=E\left[g(\bm X_k)\mid \bm X_0=\bm x\right]$.
If the Markov chain is reversible, then $\bm F$ is self-adjoint.
It follows that the norm~$\|\bm F\|$, spectral radius~$r_{\bm F}$, and largest eigenvalue of~$\bm F$ all share a common value~$r$.
Moreover, under certain regularity conditions, the chain is geometrically ergodic with geometric rate constant~$r$ if $r<1$
\citep{liu1994w,liu1995,liu2004}.

\subsection{Gibbs Sampling and Marginal Chains}
\label{subsec:marginal}

Many general techniques have been developed for
constructing
Markov chains to sample from a target posterior, such as the accept--reject algorithm and the Metropolis--Hastings algorithm \citep{metropolis1953,hastings1970}.
Many of these methods are based on proposing a new point and then either accepting or rejecting it with some probability.
For such methods to obtain reasonably large acceptance probabilities in high-dimensional settings, they must propose points that are very close to the chain's current state,
which
in turn
limits their ability to quickly traverse the state space%
\setcitestyle{notesep={; }}%
\citep[see, e.g., the work on optimal scaling of][and the references therein]{roberts2001o,beskos2009}.%
\setcitestyle{notesep={, }}%

However, one special case of the Metropolis--Hastings algorithm that is quite useful in high dimensions is known as the Gibbs sampler \citep{geman1984}.
By construction, Gibbs samplers propose a new point in such a way that the acceptance probability is~$1$.  Thus, they
are very useful for tractably sampling from the posterior in high-dimensional
settings.
Moreover,
a preponderance of theoretical convergence results
establishing geometric ergodicity
for specific MCMC schemes are for Gibbs samplers.  Indeed, the machinery by which these 
theoretical
results are established \citep[such as the method of][]{rosenthal1995} is inherently better suited
to Gibbs sampling than to
other approaches%
\setcitestyle{notesep={; }}%
\citep[see, e.g.,][and the references therein]{choi2013e,khare2013,roman2015}.%
\setcitestyle{notesep={, }}%
Thus,
let $\{(\bm X_k,\bm Y_k):k\ge0\}$ be a Markov chain constructed as a Gibbs sampler that alternates between drawing $\bm X$ and~$\bm Y$
and has
(joint) stationary distribution~$\Pi$.
It is well known that the marginal sequences $\{\bm X_k:k\ge0\}$ and $\{\bm Y_k:k\ge0\}$ are
reversible Markov chains \citep[e.g.,][]{liu1994w}.
Moreover, it can be shown that
either all three chains are geometrically ergodic
with the same rate
or none of the chains are geometrically ergodic
\citep{liu1994w}.
Thus, to establish the geometric convergence rate of the joint chain, it suffices to find the largest eigenvalue of the forward operator of either marginal chain (or to find the convergence rate of the marginal chain by some other method).
This approach can simplify proofs of geometric convergence rates if one of the marginal chains is more analytically tractable than the joint chain.

\subsection{Autocorrelation Structure}
\label{subsec:autocorrelation}

Even if a Markov chain is approximately sampling from its stationary distribution, the draws are \emph{not} approximately independent (in general).  The autocorrelation strutcture
between successive iterates
can thus be of great importance to the MCMC practicitioner when considering questions such as the amount of error inherent to the MCMC samples, i.e., how much an MCMC approximation can be expected to differ from the corresponding ``true'' result.  From a more theoretical perspective, the autocorrelation structure of the chain is also of interest due to its connections to other properties of the chain, including its convergence properties.  Indeed, it is intuitively
clear
that the greater the correlation between successive iterates, the more iterations it should take for the effects of the starting point (or starting distribution) to
``wash out.''

To properly state results
on the autocorrelation structure of Markov chains,
we first
introduce a slightly more general notion of correlation.  The \emph{maximal correlation} between two random variables
$\bm X_1$ and $\bm X_2$
(with some joint distribution) is defined as
\bas
\gamma({
\bm X_1,\bm X_2
})=\sup_{g_1,g_2}\;\corr\left[g_1({
\bm X_1
}),g_2({
\bm X_2
})\right],
\eas
where the supremum is taken over all functions $g_1$ and $g_2$ such that the variances $\var[g_1({
\bm X_1
})]$ and $\var[g_2({
\bm X_2
})]$ are finite and nonzero.
If
$({
\bm Y_k
}:k\ge0)$
is a stationary Markov chain with
${
\bm Y_k
}\sim\Pi$,
then
the norm of its forward operator $\bm F$
can be shown to be
equal to the maximal correlation between successive iterates, i.e.,
\bas
\|\bm F\|=\gamma({
\bm Y_k,\bm Y_{k+1}
})
\eas
\citep{liu1994l}.
Now consider the specific case of a two-step Gibbs sampler
to draw from some posterior distribution $\pi(\bm\theta,\bm\phi\mid\bm Z)$,
where $\bm\theta$ and $\bm\phi$ represent unknown parameters and $\bm Z$ represents observed data.
This Gibbs sampler draws a sequence of iterates $(\bm\theta_k,\bm\phi_k)$ by drawing alternately from the conditional posterior distributions $\pi(\bm\theta\mid\bm\phi,\bm Z)$ and $\pi(\bm\phi\mid\bm\theta,\bm Z)$.
Suppose the chain is stationary,
and let $\gamma(\bm\theta,\bm\phi\mid\bm Z)$ denote the maximal correlation between $\bm\theta$ and~$\bm\phi$ under the joint posterior.
Then the forward operators of the joint and marginal Gibbs sampling chains all have spectral radius equal
to the square of the maximum posterior correlation as given by
$[\gamma(\bm\theta,\bm\phi\mid\bm Z)]^2$
\citep{liu1994w}.

\subsection{Convergence Complexity}
\label{subsec:cvgc-cplx}

It is
obviously extremely
useful to show that any given Markov chain is geometrically ergodic.  Still, a full characterization of the behavior of the chain cannot be reduced to simply the binary question of whether a chain does or does not have this property.  Even if a chain is geometrically ergodic, the specific value of the
geometric
rate constant~$r$
in the bound in~(\ref{ge})
can be of great practical
importance, especially in ultra-high-dimensional applications.
More specifically, if $r$ is very close to~$1$, then a chain may still converge quite slowly despite the fact that it is geometrically ergodic,
a fact that has been noted
in the literature
\citep{papaspiliopoulos2007,papaspiliopoulos2008,woodard2013}.
Of course, a value of~$r$ close to~$1$ would immediately raise the question of the sharpness of the associated inequality, i.e.,
whether the bound in~(\ref{ge})
could be satisfied with some smaller choice of~$r$.
However, such questions
regarding the
\emph{convergence complexity}
of~$r$
may be difficult to answer when
existing
methods provide only upper bounds.

More generally, in modern applications, various notions of complexity are often of interest.
Practical limitations of computing time have motivated the consideration of
\emph{computational complexity},
and fundamental questions of signal recovery have led to the investigation of
\emph{sample complexity}
\citep[see][and the references therein]{hero2015,hero2016}.
For MCMC-based inferential procedures, the convergence complexity of the Markov chain in various $n$ and~$p$ regimes is
an important
issue that warrants attention.
In the context of Markov chain convergence, some authors have investigated the relationship between a chain's convergence behavior and the sample size of the data on which the target posterior is conditioned%
\setcitestyle{notesep={; }}%
\citep[see][and the references therein]{mossel2006,papaspiliopoulos2007,woodard2013}.%
\setcitestyle{notesep={, }}%
However, in modern high-dimensional statistics,
there is also great interest in
the behavior of the chain
as the
dimension of
the unknown parameter vector
grows without bound.
If an MCMC scheme for a Bayesian method is based on a Markov chain that is geometrically ergodic, then a
key question
of practical significance is how the associated rate constant~$r_{n,p}$ behaves in
various $n$ and~$p$ regimes.
More specifically, a key question for any particular asymptotic regime is whether $r_{n,p}\to1$,
or equivalently, whether the number of iterations required for approximate convergence
(to within some fixed distance~$\epsilon$ of the stationary distribution)
tends to infinity
as
$n$ or~$p$
tends to infinity.
An answer in the affirmative
would suggest that the Markov chain could converge quite slowly in such a regime despite the apparent theoretical safeguard of geometric ergodicity. Note also that the constant~$M_{\bm{x}_0}$ in~(\ref{ge}) can be disregarded in an asymptotic analysis as the geometric component~$r^k$ drives the convergence rates. It is clear that $r^k$ is the leading term in the bound.

It may seem that
an approach
to
answering
the question of convergence complexity may be provided by the method of \citet{rosenthal1995}.
Since this method
is commonly used to obtain
an upper bound
for the geometric convergence
rate~$r$,
it is natural to ask whether this bound
can be directly analyzed in various $n$ and~$p$ regimes.
Somewhat problematically,
such upper bounds
may tend to~$1$ as
$n$ or~$p$ tends
to
infinity.
We illustrate the behavior of these upper bounds in the following examples.

\begin{ex}
\label{ex:logistic}
Consider a Bayesian analysis of the logistic regression model
\bas
Y_i\mid\bm\beta&\sim\ind\bin\left[1,\,\logit^{-1}\left(\bm x_i^T\bm\beta\right)\right]\qquad\text{ for each }i\in\{1,\ldots,n\},\\
\bm\beta&\sim N_p(\bm0_p,\,\lambda^{-1}\bm I_p),
\eas
where $\bm x_1,\ldots,\bm x_n\in\reals^p$ and $\lambda>0$ are
known, and
where $\logit^{-1}(u)=e^u/(1+e^u)$.
A Gibbs sampler to
draw
from the posterior of a slightly more general version of this
construction
was developed by \citet{polson2013}.
\citet{choi2013e} used
the method of \citet{rosenthal1995} to prove
that this Gibbs sampler is geometrically ergodic (in fact, uniformly so) with a convergence rate bounded above by the quantity
$\tilde r=1-\delta$,
where
\bas
\delta
&=\lambda^{p/2}\left(\det\bm A\right)^{-1/2}\,2^{-n}\exp\left(-\frac n4-\frac1{4\lambda}\left\|\bm X\bm A^{-1/2}\bm X^T\tilde{\bm Y}\right\|_2^2\right),
\eas
where $\bm A=\frac12\bm X^T\bm X+\lambda\bm I_p$
and $\tilde{\bm Y}=\bm Y-\frac12\bm1_n$
\citep[see Proposition~3.1 of][]{choi2013e}.
Thus, the results of \citet{choi2013e} essentially establish the upper bound
\basn
d_{\tv}(G_k,G)\le M \tilde r^k=M(1-\delta)^k
\label{bad-bound-lasso}
\easn
for some $M>0$, where $G_k$ denotes the distribution of the $k$th iterate of the joint chain and $G$ denotes the corresponding stationary distribution.
The following lemma establishes
the behavior of
$\tilde r=1-\delta$, and hence the behavior of the upper bound in~(\ref{bad-bound-lasso}),
as $n$ or~$p$ grows.  Its proof and all subsequent proofs are provided in the
supplemental sections.
\end{ex}

\begin{lem}
\label{lem:ex-logistic}
Consider the upper bound $\tilde r=1-\delta$ provided by \citet{choi2013e} for the convergence rate of the logistic regression Gibbs sampler in Example~\ref{ex:logistic}.
Then $\tilde r\to1$ exponentially fast as $n\to\infty$ (for fixed~$p$) and as $p\to\infty$ (for fixed~$n$).
\end{lem}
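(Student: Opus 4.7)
The plan is to show directly that $\delta\to 0$ exponentially in the relevant parameter, which forces $\tilde r = 1-\delta \to 1$ at the same rate. The two regimes are handled by nearly decoupled arguments because each one exposes a different factor in the expression for~$\delta$.

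For the regime $n\to\infty$ with $p$ fixed, the approach is simply to keep the manifestly exponentially small factor $2^{-n}e^{-n/4}$ and throw everything else away. Since $\bm A=\tfrac12\bm X^T\bm X+\lambda\bm I_p\succeq\lambda\bm I_p$, we have $\det\bm A\ge\lambda^p$, so $\lambda^{p/2}(\det\bm A)^{-1/2}\le 1$, and the quadratic-form factor $\exp\bigl(-\tfrac{1}{4\lambda}\|\bm X\bm A^{-1/2}\bm X^T\tilde{\bm Y}\|_2^2\bigr)$ is trivially at most~$1$. Together these yield $\delta\le 2^{-n}e^{-n/4}$ uniformly in the design, settling the first claim without any conditions on~$\bm X$.

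For the regime $p\to\infty$ with $n$ fixed, the $n$-dependent factors no longer help, so the exponential decay must be extracted from the quadratic form itself. The key algebraic step is the identity
\bas
\bm A^{-1/2}\bm X^T\bm X\,\bm A^{-1/2}=\bm A^{-1/2}(2\bm A-2\lambda\bm I_p)\bm A^{-1/2}=2\bm I_p-2\lambda\bm A^{-1},
\eas
which rewrites the exponent as
\bas
-\tfrac{n}{4}-\tfrac{1}{4\lambda}\|\bm X\bm A^{-1/2}\bm X^T\tilde{\bm Y}\|_2^2
= -\tfrac{n}{4}-\tfrac{1}{2\lambda}\|\bm X^T\tilde{\bm Y}\|_2^2+\tfrac{1}{2}\tilde{\bm Y}^T\bm X\bm A^{-1}\bm X^T\tilde{\bm Y}.
\eas
The push-through identity $\bm X\bm A^{-1}\bm X^T=\bm X\bm X^T\bigl(\tfrac12\bm X\bm X^T+\lambda\bm I_n\bigr)^{-1}$ exhibits the last term as a quadratic form in $\tilde{\bm Y}$ whose matrix has eigenvalues in $[0,2)$; it is therefore bounded by $2\|\tilde{\bm Y}\|_2^2=n/2$, independently of~$p$. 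Combining this with the prefactor bound $\lambda^{p/2}(\det\bm A)^{-1/2}\le 1$ gives $\delta\le 2^{-n}\exp\bigl(-\tfrac{1}{2\lambda}\|\bm X^T\tilde{\bm Y}\|_2^2\bigr)$.

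The main obstacle is to show that $\|\bm X^T\tilde{\bm Y}\|_2^2=\sum_{j=1}^p\bigl((\bm x^{(j)})^T\tilde{\bm Y}\bigr)^2$ grows at least linearly in~$p$, since the claim requires exponential decay in $p$ alone and all other ingredients in the bound are $O(1)$ in~$p$. This is where a mild non-degeneracy of the design sequence must enter: for instance, if the columns $\bm x^{(j)}$ are drawn i.i.d.\ from a distribution with $E\bigl[((\bm x)^T\tilde{\bm Y})^2\bigr]>0$, or more generally if a fixed positive fraction of columns satisfies $((\bm x^{(j)})^T\tilde{\bm Y})^2\ge c>0$, then $\|\bm X^T\tilde{\bm Y}\|_2^2\ge c'p$ for all large~$p$, which delivers $\delta\le 2^{-n}e^{-c'p/(2\lambda)}$ and hence exponential convergence $\tilde r\to 1$ in~$p$, completing the proof.
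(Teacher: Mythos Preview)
Your Case~I argument is correct and essentially identical to the paper's: bound the prefactors by~$1$ via $\det\bm A\ge\lambda^p$ and retain the explicit $2^{-n}e^{-n/4}$.

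For Case~II your argument is correct but takes a genuinely different route. The paper works through the singular value decomposition $\bm X=\bm U\bm\Omega\bm V^T$, expresses the quadratic form as $\tfrac{1}{2\lambda}\sum_{i=1}^n\bigl[2\omega_i^4/(\omega_i^2+2\lambda)\bigr](\bm u_i^T\tilde{\bm Y})^2$, and then argues that $\max_i\omega_i^2\ge\max_i\sum_jX_{ij}^2$ grows with~$p$, forcing the exponent to $-\infty$. You instead use the algebraic identity $\bm A^{-1/2}\bm X^T\bm X\bm A^{-1/2}=2\bm I_p-2\lambda\bm A^{-1}$ to split off the dominant term $\|\bm X^T\tilde{\bm Y}\|_2^2$ exactly, and then control the correction via the push-through identity and the uniform eigenvalue bound $\bm X\bm A^{-1}\bm X^T\prec 2\bm I_n$, which neatly cancels the $-n/4$. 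Your decomposition is arguably cleaner and makes completely transparent which scalar quantity must diverge, namely the column-wise sum $\sum_{j=1}^p((\bm x^{(j)})^T\tilde{\bm Y})^2$. Both arguments ultimately rest on a non-degeneracy assumption about the design sequence as $p\to\infty$; you are more explicit about this than the paper, which simply asserts that $\max_i\sum_jX_{ij}^2$ ``is of order~$p$.'' Note that the two conditions are not equivalent: yours involves the fixed direction~$\tilde{\bm Y}$, so a pathological design with growing row norms but columns nearly orthogonal to~$\tilde{\bm Y}$ would satisfy the paper's hypothesis yet defeat yours (and conversely one can cook up designs where your sum grows but no single row norm does). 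In generic settings either condition holds, and both yield the claimed exponential decay.
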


Thus, if either $n$ or $p$ tends to infinity, then the upper bound on the convergence rate tends to~$1$,
and it does so exponentially fast.
The apparent safeguard of geometric ergodicity is therefore misleading in high-dimensional applications when either $n$ or~$p$ is very large.

\begin{ex}
\label{ex:lasso}
Consider the Bayesian lasso framework of \citet{park2008}:
\bas
\bm Y\mid\bm\beta,\sigma^2,\bm\tau&\sim N_n(\bm X\bm\beta,\,\sigma^2\bm I_n),\\
\bm\beta\mid\sigma^2,\bm\tau&\sim N_p(\bm0_p,\sigma^2\bm D_{\bm\tau}),\\
\pi(\sigma^2)&\propto1/\sigma^2,\\
\tau_j&\sim\iid\expd(\lambda/2)\qquad\text{ for each }j\in\{1,\ldots,p\},
\eas
where $\bm D_{\bm\tau}=\diag(\tau_1,\ldots,\tau_p)$.
\citet{park2008} provide a Gibbs sampler to draw from the posterior corresponding to the Bayesian lasso.
\citet{khare2013} demonstrated a useful result that this Gibbs sampler is geometrically ergodic
with an upper bound~$\tilde r$ for its geometric rate constant.
The following lemma establishes the asymptotic behavior of~$\tilde r$.
\end{ex}

\begin{lem}
\label{lem:ex-lasso}
Consider the upper bound~$\tilde r$ provided by \citet{khare2013} for the convergence rate of the Bayesian lasso Gibbs sampler in Example~\ref{ex:lasso}.  Then $\tilde r\to1$ exponentially fast as $n\to\infty$ (for fixed~$p$) and as $p\to\infty$ (for fixed~$n$).
\end{lem}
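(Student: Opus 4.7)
The plan is to follow the same template used for Lemma~\ref{lem:ex-logistic}: transcribe \citeauthor{khare2013}'s explicit formula for $\tilde r$, isolate the component on which the dependence on $n$ and~$p$ is concentrated, and show that this component forces $1-\tilde r$ to~$0$ exponentially fast. Since \citet{khare2013} establish geometric ergodicity of the Park--Casella Gibbs sampler via \citeauthor{rosenthal1995}'s method, their bound has the schematic form $\tilde r = \max\{(1-\epsilon)^{1/k_0},\,\alpha\}$ (or a closely related Rosenthal--Roberts--Tweedie combination), where $\alpha$ depends only on the drift constants $\lambda$ and $b$ in~(\ref{drift}) together with the radius~$d$ of the small set $C=\{V\le d\}$, and where $\epsilon$ is the minorization constant in~(\ref{minorization}). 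The first step is to verify that the drift piece~$\alpha$ can be chosen uniformly in $n$ and~$p$ to lie in a fixed subinterval of~$(0,1)$, leaving the minorization factor as the dominant term.

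The heart of the argument will be an exponential upper bound on~$\epsilon$ of the form $\epsilon \le c_1^{\,n}\,c_2^{\,p}$ with constants $c_1,c_2\in(0,1)$ depending on the design matrix~$\bm X$, the observed response~$\bm Y$, the radius~$d$, and the hyperparameter~$\lambda$, but not on $n$ or~$p$ directly. This bound should follow from the product structure of the conditional densities that drive the sampler: the full conditional of $\bm\tau\mid\bm\beta,\sigma^2$ factorizes into $p$ independent one-dimensional $\inversegaussian$ densities, while the conditional of $(\bm\beta,\sigma^2)\mid\bm\tau$ involves an $n$-dimensional Gaussian likelihood whose normalizing constant shrinks exponentially in~$n$. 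Taking the infimum of the relevant densities over $\bm\beta\in C$ therefore produces a bound that factorizes across both indices, in direct analogy with the $2^{-n}\lambda^{p/2}(\det\bm A)^{-1/2}$ structure of~$\delta$ in Example~\ref{ex:logistic}.

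Combining these ingredients gives $1-\tilde r \le 1-(1-\epsilon)^{1/k_0} = O(\epsilon) = O(c_1^{\,n}\,c_2^{\,p})$, which is exponential in $n$ for fixed~$p$ and in~$p$ for fixed~$n$. The main obstacle will be purely a matter of careful bookkeeping with \citeauthor{khare2013}'s notation: correctly identifying which of their constants plays the role of the minorization constant, evaluating their infimum at a judiciously chosen point in~$C$ so as to obtain a clean product bound, and verifying that the drift constants can be chosen so that $\alpha$ does not itself tend to~$1$ and dominate the bound. Beyond this, no new conceptual ingredient is needed, because in both Example~\ref{ex:logistic} and Example~\ref{ex:lasso} the Rosenthal-type bound inevitably degrades whenever the minorization constant must control a conditional density over a parameter space whose dimension grows with $n$ or~$p$.
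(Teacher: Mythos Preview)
Your overall direction---show that the minorization constant $\epsilon$ in \citeauthor{khare2013}'s bound tends to zero exponentially in both $n$ and~$p$, and conclude that $\tilde r\to1$---matches the paper's. But one step in your plan is both unnecessary and would fail if attempted.

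You propose to first verify that the drift piece~$\alpha$ can be held in a fixed subinterval of~$(0,1)$. This is not needed: in any Rosenthal-type bound of the schematic form $\tilde r=\max\{(1-\epsilon)^r,\alpha\}$ with $r\in(0,1)$, one always has $\tilde r\ge(1-\epsilon)^r\ge1-\epsilon$ by concavity, \emph{regardless} of~$\alpha$. Hence $\epsilon\to0$ alone forces $\tilde r\to1$. Worse, for \citeauthor{khare2013}'s actual constants the drift parameter is $\gamma=\max\{p/(n+p-2),\,1/2\}$, which tends to~$1$ as $p\to\infty$ with $n$ fixed; the associated $b$ grows like~$p^2$ and the required small-set radius $d>2b/(1-\gamma)$ blows up as well. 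So the drift piece cannot be held uniformly in $(0,1)$, and you would get stuck trying.

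The paper's proof bypasses all of this. It simply transcribes \citeauthor{khare2013}'s explicit formula for~$\epsilon$, observes that the numerator inside the braces is at most $\bm Y^T\bm Y$ while the denominator exceeds $d>2b>2\bm Y^T\bm Y$, and concludes $\epsilon\le(\bm Y^T\bm Y/d)^{(n+p)/2}\le2^{-(n+p)/2}$. Your heuristic about ``product structure across $p$ inverse-Gaussian factors times an $n$-dimensional Gaussian'' is not how the exponent $(n+p)/2$ actually arises in their formula, and chasing that heuristic could cost you time; the direct bookkeeping from their equation~(3.16) is a two-line calculation.
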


Using the bounds from
Examples~\ref{ex:logistic}~and~\ref{ex:lasso}, the number of iterations required to obtain convergence to a desired tolerance in total variation norm grows at least exponentially fast in~$p$.
Note that
it is an upper bound and not a lower bound.
To gain some insight into the
possible disadvantages
of these upper bounds
in high-dimensional settings, consider the dimensions of the various quantities that appear
in the proof of geometric ergodicity.
More specifically, if the dimension  of the distributions $P$ and~$Q$ in 
the minorization condition in~(\ref{minorization})
is~$p$,
consider the constant $\epsilon\equiv\epsilon_{n,p}$ that appears 
in the minorization condition in~(\ref{minorization}),
noting that $1-\epsilon$ essentially corresponds to the upper bound for the geometric convergence rate.
This constant will often take the form
$\epsilon_{n,p}=(\epsilon_\star)^p$
for some $0<\epsilon_\star<1$.  For example, if $P$ is expressable as a product of $p$ independent marginal distributions, then it will often be necessary to find
a bound
akin to
the minorization condition in~(\ref{minorization})
for each such marginal
distribution.
Thus,
the ``overall'' $\epsilon$ will be a product of $p$ ``individual'' $\epsilon$-type quantities.
Hence, it is often the case that $\epsilon_{n,p}\to0$ as $p\to\infty$.
If indeed $\epsilon_{n,p}\to0$, then the resulting bound on the geometric convergence rate (namely, $1-\epsilon_{n,p}$ or some power thereof) tends to~$1$ and hence is not useful.
Such problems have hampered attempts to combine \citeauthor{rosenthal1995}'s method with dimensional asymptotics
\citep[see, for instance,][]{hu2012}.
Thus, alternative strategies may be required
if we wish to obtain convergence rates that do not tend to~$1$ as the dimension grows.
On the other hand, it may instead be asked whether there exist settings in which \citeauthor{rosenthal1995}'s technique can overcome these high-dimensional obstacles.
We show later in Section~\ref{sec:ir} that a specifically tailored application of \citeauthor{rosenthal1995}'s approach may still lead to a convergence rate that is bounded away from~$1$.

\section{%
Regression Models
\&
Graphical
Models}
\label{sec:rr}

We now begin to pursue our goal of obtaining precise bounds for the geometric convergence rate of high-dimensional MCMCs. To this end, we first undertake a thorough investigation of the behavior of the Gibbs sampler for a 
Bayesian analysis of the standard regression model.
The
properties of this basic model are essential for illuminating the problems that certain types of Gibbs samplers encounter in high-dimensional regimes.  These results also lead directly to corresponding results for Gibbs samplers in
an important class of graphical models.


Consider
a Bayesian analysis of the standard regression model
\begin{align}
\bm Y\mid\bm\beta,\sigma^2&\sim N_n(\bm X\bm\beta,\,\sigma^2\bm I_n),\notag\\
\bm\beta\mid\sigma^2&\sim N_{p}(\bm0_{p},\,\lambda^{-1}\sigma^2\bm I_{p}),\label{model-rr}\\
\pi(\sigma^2)&\propto1/\sigma^2,\quad\sigma^2>0,\notag
\end{align}
where $\bm X$ is a known $n\times p$ matrix of covariate values and $\lambda>0$ is a known regularization parameter.
We assume $n\ge5$ to
facilitate the technical analysis.
%
Then a Gibbs sampler to draw from the joint posterior under~(\ref{model-rr}) may be constructed by taking an initial value $\sigma^2_0>0$ and then drawing (for every $k\ge1$)
\basn
\left.\bm\beta_k\given\sigma^2_{k-1},\bm Y\right.&\sim N_p\left(\tilde{\bm\beta},\,\sigma^2_{k-1}\bm A^{-1}\right),\notag\\
\left.\sigma^2_k\given\bm\beta_k,\bm Y\right.&\sim\inversegamma\left\{\frac{n+p}2,\,\frac12\left[\left(\bm\beta_k-\tilde{\bm\beta}\right)^T\bm A\left(\bm\beta_k-\tilde{\bm\beta}\right)+C\right]\right\},\label{gibbs-rr}
\easn
where $\bm A=\bm X^T\bm X+\lambda\bm I_p$ (which is positive-definite), $\tilde{\bm\beta}=\bm A^{-1}\bm X^T\bm Y$, and $C=\bm Y^T(\bm I_n-\bm X\bm A^{-1}\bm X^T)\bm Y$.

\subsection{Convergence Rates}
\label{subsec:cvgc-rr}

In order to understand the convergence behavior of the Gibbs sampler in~(\ref{gibbs-rr}) corresponding to a standard regression model, we proceed to undertake a fundamental analysis of this MCMC scheme.


We now establish 
sharp bounds for the
geometric convergence rate of the
standard Bayesian regression Gibbs sampler in~(\ref{gibbs-rr})
in total variation
norm in terms of the dimension~$p$ and sample size~$n$.
For every $k\ge0$, let $F_k(\sigma^2_0)$ denote the joint distribution of $(\bm\beta_k,\sigma^2_k)$ for the chain in~(\ref{gibbs-rr}) started with initial value~$\sigma^2_0$, and let $F$ denote the stationary distribution of this chain, i.e.,
the true joint posterior of 
 $(\bm\beta,\sigma^2)$.
Then we have the following
result.

\begin{thm}
\label{thm:tv-rr}
For the standard Bayesian regression Gibbs sampler in~(\ref{gibbs-rr}), there
exist $0<M_1\le M_2$ such that
\bas
M_1\left(\frac{p}{n+p-2}\right)^k\le d_{\tv}\left[F_k\left(\sigma^2_0\right),\,F\right]\le
M_2\left(\frac{p}{n+p-2}\right)^k
\eas
for every~$k\ge0$.
\end{thm}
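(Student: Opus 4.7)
The plan is to first reduce to the one-dimensional $\sigma^2$-marginal chain and then analyze it through its autoregressive structure. Since the Gibbs sampler draws $\bm\beta_k$ from $\pi(\bm\beta\mid\sigma^2_{k-1},\bm Y)$, the joint one-step transition depends on the current state only through $\sigma^2_{k-1}$. Consequently, both $F_k$ and $F$ arise as the image of $\pi^\sigma_{k-1}$ and of the stationary marginal $\pi^\sigma_F=\inversegamma(n/2,C/2)$, respectively, under a common kernel. The data-processing inequality together with marginal-$\le$-joint then yields the sandwich
\[
d_{\tv}(\pi^\sigma_k,\pi^\sigma_F)\;\le\;d_{\tv}(F_k,F)\;\le\;d_{\tv}(\pi^\sigma_{k-1},\pi^\sigma_F),
\]
where $\pi^\sigma_k$ denotes the marginal law of $\sigma^2_k$. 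It therefore suffices to establish matching upper and lower bounds of order $\rho^k$ with $\rho:=p/(n+p-2)$ for the one-dimensional $\sigma^2$-chain.

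Integrating out $\bm\beta_k$ gives $\sigma^2_k\mid\sigma^2_{k-1}\sim\inversegamma((n+p)/2,(\sigma^2_{k-1}T+C)/2)$ with $T\sim\chi^2_p$ drawn independently at each step. A direct conditional-mean calculation then produces the key linear-drift identity
\[
E[\sigma^2_k\mid\sigma^2_{k-1}]=\rho\,\sigma^2_{k-1}+\frac{C}{n+p-2},
\]
identifying $f_1(\sigma^2):=\sigma^2-E_F[\sigma^2]$ as an eigenfunction of the forward operator with eigenvalue exactly $\rho$. Applying the same computation to $j$th powers yields a sequence of polynomial eigenvalues $\rho_j=\Gamma((n+p)/2-j)\Gamma(p/2+j)/[\Gamma((n+p)/2)\Gamma(p/2)]$, and under $n\ge5$ a direct ratio check confirms that these are dominated by $\rho_1=\rho$ within the admissible range of $j$ for which the polynomial lies in $L^2(\pi^\sigma_F)$ (using that the $2j$th moment of $\sigma^2$ under $\pi^\sigma_F$ requires $j<n/4$).

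For the upper bound I would use reversibility and $L^2$-geometric decay: after a single iteration the density $d\pi^\sigma_1/d\pi^\sigma_F$ lies in $L^2(\pi^\sigma_F)$, so the spectral bound gives $\chi^2(\pi^\sigma_k,\pi^\sigma_F)\le\rho^{2(k-1)}\chi^2(\pi^\sigma_1,\pi^\sigma_F)$ and Cauchy--Schwarz supplies $d_{\tv}\le M_2\rho^k$. For the lower bound I would test against $f_1$, using $|E_{\pi^\sigma_k}[f_1]|=\rho^k|\sigma^2_0-E_F[\sigma^2]|$, and convert this moment gap into a TV gap by truncating $f_1$ at a slowly growing level $M_k$; the tail errors are controlled by the uniform second-moment bound $\sup_kE[(\sigma^2_k)^2]<\infty$, which itself follows from iterating the analogous second-moment drift (valid because $\rho_2<1$ whenever $n\ge5$). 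This delivers $d_{\tv}(\pi^\sigma_k,\pi^\sigma_F)\ge M_1\rho^k$, and combining with the sandwich concludes the proof.

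The main technical obstacle is rigorously proving that $\rho$ is the spectral radius of the $\sigma^2$-forward operator on $L^2_0(\pi^\sigma_F)$, not merely a lower bound from one explicit eigenfunction. Because the Inverse-Gamma stationary law has only finitely many finite moments, a naive polynomial-completeness argument is delicate, so one either constructs an $L^2$-complete orthogonal basis adapted to the conjugate structure (using the Laguerre-type polynomials associated with the precision $1/\sigma^2$) or, as an alternative, proves the upper bound by an iterated $\chi^2$-contraction argument built directly on the affine drift identity that sidesteps full spectral information.
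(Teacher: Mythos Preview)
Your reduction to the $\sigma^2$-marginal chain and the identification of $f_1(\sigma^2)=\sigma^2-C/(n-2)$ as an eigenfunction with eigenvalue $\rho=p/(n+p-2)$ are correct and match the paper's lower-bound argument. Your truncation scheme for converting the moment gap into a TV lower bound is a reasonable elaboration of what the paper leaves implicit.

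The genuine gap is in the upper bound, and you have correctly located it yourself: you need the spectral radius of the $\sigma^2$-forward operator to equal $\rho$, not merely to be at least $\rho$. Your two proposed fixes do not close this gap. The Laguerre suggestion fails because, in the precision parametrization $\phi=1/\sigma^2$, the transition $\phi_k=V_k\phi_{k-1}/(U_k+C\phi_{k-1})$ is not polynomial in $\phi_{k-1}$, so the operator does not triangularize in the Laguerre basis and you cannot read off eigenvalues that way. The ``iterated $\chi^2$-contraction from the affine drift'' is too vague: an affine drift in the first moment gives you exactly one eigenvalue and nothing about the rest of the spectrum, so it cannot by itself bound the $L^2$ operator norm. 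Your polynomial-eigenvalue calculation $\rho_j$ is correct as far as it goes (the operator does map degree-$j$ polynomials in $\sigma^2$ to degree-$j$ polynomials, with leading coefficient $\rho_j$), but since only finitely many such polynomials lie in $L^2(\pi^\sigma_F)$, this yields no control over the full spectrum.

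The paper solves this obstacle by an entirely different route that bypasses spectral completeness. It passes to $\eta_k=\|\bm\theta_k\|_2^2/(\|\bm\theta_k\|_2^2+C)\in(0,1)$, for which $d_{\tv}$ is preserved because the direction $\bm\theta_k/\|\bm\theta_k\|_2$ is iid uniform on the sphere. Under stationarity $(\eta_0,\eta_1)$ has the bivariate beta law of Olkin, which admits a latent-variable representation $\eta_0,\eta_1\mid Q$ iid with $Q$ beta--negative binomial; by a result of Yu, $\gamma(\eta_0,\eta_1)=[\gamma(\eta_0,Q)]^2$, which equals the convergence rate of an auxiliary two-block Gibbs sampler on $(\tilde\eta,Q)$. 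The marginal $Q_k$ chain of that auxiliary sampler is \emph{stochastically monotone} with an affine conditional mean $E[Q_1\mid Q_0=q]=\rho\,q+\text{const}$, and the stochastic-monotonicity machinery of \citet{diaconis2010} then delivers $\rho$ as an upper bound on the rate directly, without any appeal to $L^2$-completeness. This is the missing idea in your proposal: the upper bound is obtained not by spectral analysis of the $\sigma^2$-chain itself, but by transforming to a bounded state space, introducing a discrete latent variable, and exploiting stochastic monotonicity.
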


Note that if $p\equiv p_n$ grows faster than~$n$, then the 
sharp
bound provided by Theorem~\ref{thm:tv-rr} tends to~$1$.  Hence, Theorem~\ref{thm:tv-rr} provides our first theoretical indication of
the precise nature of the
convergence problem in high-dimensional Markov chains.
In Supplemental Section~\ref{sec:supp-rr}, we also
obtain similar rates in terms of Wasserstein distance~$d_W$,
including expressions for the multiplicative constants in the bounds (i.e., the equivalent of $M_1$ and~$M_2$ in Theorem~\ref{thm:tv-rr}).  These results allow us to derive
expressions
for the number of iterations required for convergence of the chain to within a given 
tolerance~$\epsilon>0$.
We show that the number of iterations required for convergence to within~$\epsilon$ grows only linearly in~$p$ and not exponentially.
This is an encouraging result.
A complete discussion of convergence rates in terms of Wasserstein distance can be found in
Supplemental Section~\ref{sec:supp-rr}.

We also note that \citet{roman2012} establish a useful result concerning geometric ergodicity of a Gibbs sampler for the linear mixed model. Their results, however, do not provide a quantitative bound on the geometric convergence rate itself. Thus it is not clear how the geometric rate behaves as a function of the sample size~$n$ and the dimension~$p$. In contrast our analysis obtains sharp quantitative bounds for this geometric convergence rate in terms of $n$ and~$p$ for standard regression.

\subsection{%
Characterization of Convergence
}
\label{subsec:further}

The behavior of the Gibbs sampler in
Subsection~\ref{subsec:cvgc-rr}
in
various $n$ and~$p$ regimes
can be further examined
by considering the nature of the joint posterior distribution
itself.
The following lemma provides insight
regarding the posterior correlation between~$\sigma^2$ and a particular function of~$\bm\beta$.
Specifically, let $\bm\theta=\bm A^{1/2}(\bm\beta-\tilde{\bm\beta})$,
and note that
$\|\bm\theta\|_2$
represents a
Mahalanobis-type
distance between $\bm\beta$ and the posterior mean~$\tilde{\bm\beta}$.
Then we have the following result.

\begin{lem}\label{lem:post-corr-rr}
For
the posterior of
the standard Bayesian regression framework in~(\ref{model-rr}),
\bas
\corr\left(\sigma^2,\,
\left\|\bm\theta\right\|_2^2
\given\bm Y\right)=\sqrt{\frac p{n+p-2}}.
\eas
\end{lem}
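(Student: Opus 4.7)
The plan is to exploit the conditional independence structure induced by the transformation $\bm\theta = \bm A^{1/2}(\bm\beta - \tilde{\bm\beta})$. First, I would note that the Gibbs update for $\bm\beta$ in~(\ref{gibbs-rr}) is exactly the full conditional posterior of $\bm\beta$, so $\bm\beta \mid \sigma^2, \bm Y \sim N_p(\tilde{\bm\beta}, \sigma^2 \bm A^{-1})$. Applying the linear map $\bm A^{1/2}$ gives $\bm\theta \mid \sigma^2, \bm Y \sim N_p(\bm 0_p, \sigma^2 \bm I_p)$. Consequently, if we set $U = \|\bm\theta\|_2^2 / \sigma^2$, then $U \mid \sigma^2, \bm Y \sim \chi^2_p$, a distribution that does not depend on~$\sigma^2$. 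Hence under the posterior, $U$ is independent of~$\sigma^2$ and $\|\bm\theta\|_2^2 = \sigma^2 U$ with $\sigma^2 \perp U$.

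Next, I would identify the marginal posterior of~$\sigma^2$. Integrating $\bm\beta$ out of the hierarchy gives $\bm Y \mid \sigma^2 \sim N_n(\bm 0_n,\, \sigma^2(\bm I_n + \lambda^{-1}\bm X\bm X^T))$, and the Woodbury identity yields $(\bm I_n + \lambda^{-1}\bm X\bm X^T)^{-1} = \bm I_n - \bm X\bm A^{-1}\bm X^T$, so the exponent in the likelihood is $-C/(2\sigma^2)$ with $C$ as defined after~(\ref{gibbs-rr}). Combined with the Jeffreys-type prior $\pi(\sigma^2) \propto 1/\sigma^2$, this gives $\sigma^2 \mid \bm Y \sim \inversegamma(n/2,\, C/2)$, from which the standard moment formulas yield $\var(\sigma^2 \mid \bm Y) / [E(\sigma^2 \mid \bm Y)]^2 = 2/(n-4)$ (this is where the assumption $n \ge 5$ is used).

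The remainder is a short moment calculation. Using $\sigma^2 \perp U$ under the posterior, with $E(U) = p$ and $\var(U) = 2p$, I would compute
\begin{align*}
\cov(\sigma^2, \|\bm\theta\|_2^2 \mid \bm Y) &= E(\sigma^4)\,E(U) - E(\sigma^2)\,E(\sigma^2)\,E(U) = p\,\var(\sigma^2 \mid \bm Y),\\
\var(\|\bm\theta\|_2^2 \mid \bm Y) &= E(\sigma^4)\,E(U^2) - [E(\sigma^2)\,E(U)]^2 = (p^2 + 2p)\,\var(\sigma^2 \mid \bm Y) + 2p\,[E(\sigma^2 \mid \bm Y)]^2.
\end{align*}
Squaring the correlation gives
\[
\corr(\sigma^2, \|\bm\theta\|_2^2 \mid \bm Y)^2 = \frac{p^2}{p^2 + 2p + 2p \cdot [E(\sigma^2\mid\bm Y)]^2/\var(\sigma^2\mid\bm Y)} = \frac{p^2}{p^2 + 2p + p(n-4)} = \frac{p}{n+p-2},
\]
after substituting the inverse-gamma moment ratio from the previous step. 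Taking the positive square root delivers the claimed identity.

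There is no real obstacle here; the only subtle point is recognizing the exact factorization $\|\bm\theta\|_2^2 = \sigma^2 U$ with $U$ posterior-independent of $\sigma^2$, which converts what could be a multivariate integration over $\bm\beta$ into a one-dimensional moment computation in the inverse-gamma distribution. Everything else is bookkeeping.
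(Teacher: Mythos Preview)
Your argument is correct and is essentially the calculation the paper alludes to: the paper's proof merely states that the result follows from a ``straightforward calculation of the posterior correlation using the conditional and marginal posteriors as well as standard properties of the $\chi^2$, $F$, and inverse-gamma distributions,'' and your proposal supplies exactly those details. The only minor difference is that the paper's mention of the $F$ distribution suggests they may have computed $\var(\|\bm\theta\|_2^2\mid\bm Y)$ via the marginal law $(n/p)\,\|\bm\theta\|_2^2/C\sim F_{p,n}$, whereas you work directly with the factorization $\|\bm\theta\|_2^2=\sigma^2 U$; these are equivalent bookkeeping choices.
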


Thus, by Lemma~\ref{lem:post-corr-rr}, the posterior correlation of $\sigma^2$ and
$\|\bm\theta\|_2^2
$
tends to~$1$ asymptotically if $p_n\ne O(n)$.  This behavior is a consequence of the way in which the prior on $\bm\beta$ and~$\sigma^2$ is specified
under the Bayesian regression framework in~(\ref{model-rr}).
Specifically, observe that under
this prior,
we have
\basn
\left.\frac1p\|\bm\beta\|_2^2\given\sigma^2\right.\sim\gammad\left(\frac p2,\frac{p\lambda}{2\sigma^2}\right),\qquad
\sigma^2\mid\bm\beta\sim\inversegamma\left(\frac p2,\frac\lambda2\|\bm\beta\|_2^2\right).
\label{prior-informative}
\easn
Observe from~(\ref{prior-informative}) that
\bas
E\left(\frac1p\|\bm\beta\|_2^2\given\sigma^2\right)=\frac{\sigma^2}\lambda,
\qquad
\var\left(\frac1p\|\bm\beta\|_2^2\given\sigma^2\right)=\frac2p\left(\frac{\sigma^2}\lambda\right)^2.
\eas
If $p$ is large, the prior distribution of $p^{-1}\|\bm\beta\|_2^2\mid\sigma^2$ is highly concentrated around $\sigma^2/\lambda$.
Similarly, observe from~(\ref{prior-informative}) that
\bas
E(\sigma^2\mid\bm\beta)=\frac{\lambda\|\bm\beta\|_2^2}{p-2},
\qquad
\var(\sigma^2\mid\bm\beta)=\frac2{p-4}\left(\frac{\lambda\|\bm\beta\|_2^2}{p-2}\right)^2.
\eas
If $p$ is large, the prior distribution of $\sigma^2\mid\bm\beta$ is highly concentrated around $\lambda\|\bm\beta\|_2^2/(p-2)$.
Thus, for large~$p$,
the prior is highly informative about the relationship between $\|\bm\beta\|_2^2$ and~$\sigma^2$.
It can be shown that this
high dependence
carries over to the posterior in the regime where $p\gg n$
because the data is overwhelmed by the prior.
The posterior dependence between the parameters manifests itself through the conditionals that are used in the Gibbs sampler.  The value of $\|\bm\theta_k\|_2^2=(\bm\beta_k-\tilde{\bm\beta})^T\bm A(\bm\beta_k-\tilde{\bm\beta})$ is heavily dependent on the value of $\sigma^2_{k-1}$, and in turn
the
value
of $\sigma^2_{k-1}$ is heavily dependent on the value of $\|\bm\theta_{k-1}\|_2^2$.
Thus, each iteration of the
joint and marginal Gibbs sampling chains
is highly dependent on the previous
iteration.
The same concept may be alternatively expressed by stating that the chain \emph{mixes} poorly.  More specifically, one manifestation of this poor mixing behavior is high autocorrelation between successive values $\sigma^2_k$ and~$\sigma^2_{k+1}$, even if the chain is in its stationary state.
This property is formalized
in
the following
lemma.
To state the result, let $G$ denote the stationary distribution of the marginal $\sigma^2_k$ chain for the chain in~(\ref{gibbs-rr}), i.e., the true marginal posterior of~$\sigma^2$ under~(\ref{model-rr}), which is the $\inversegamma(n/2,\,C/2)$ distribution.

\begin{lem}\label{lem:autocorr-rr}
Consider the standard Bayesian regression Gibbs sampler in~(\ref{gibbs-rr}).
If $\sigma^2_0\sim G$, then
for every $k\ge0$,
$\corr(\sigma^2_k,\sigma^2_{k+1})=\corr(\|\bm\theta_k\|_2^2,\|\bm\theta_{k+1}\|_2^2)=p/(n+p-2)$.
\end{lem}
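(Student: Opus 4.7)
\noindent\textbf{Proof plan for Lemma~\ref{lem:autocorr-rr}.}
The plan is to exploit the tower/regression identity $\cov(X,Y)=\cov(X,E[Y\mid X])$ together with the fact that the two one-step-ahead conditional expectations happen to be \emph{affine} with slope exactly $p/(n+p-2)$. First, I would set the stationarity scaffolding in place: if $\sigma^2_0\sim G$, then by construction the joint chain $\{(\bm\beta_k,\sigma^2_k):k\ge1\}$ is stationary with $(\bm\beta_k,\sigma^2_k)$ distributed as the joint posterior, and in particular $\sigma^2_k\sim G$ and $\|\bm\theta_k\|_2^2$ has the marginal posterior law of $(\bm\beta-\tilde{\bm\beta})^T\bm A(\bm\beta-\tilde{\bm\beta})$ for every $k$. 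This ensures $\var(\sigma^2_k)=\var(\sigma^2_{k+1})$ and $\var(\|\bm\theta_k\|_2^2)=\var(\|\bm\theta_{k+1}\|_2^2)$, so only the covariance numerators need to be computed.

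Second, for the $\sigma^2$ statement, I would chain the two Gibbs conditionals in~(\ref{gibbs-rr}). Given $\bm\beta_{k+1}$, the update has $E[\sigma^2_{k+1}\mid\bm\beta_{k+1}]=(\|\bm\theta_{k+1}\|_2^2+C)/(n+p-2)$, and given $\sigma^2_k$, we have $\bm\theta_{k+1}\mid\sigma^2_k\sim N_p(\bm0,\sigma^2_k\bm I_p)$, so $E[\|\bm\theta_{k+1}\|_2^2\mid\sigma^2_k]=p\sigma^2_k$. Iterating yields
\bas
E[\sigma^2_{k+1}\mid\sigma^2_k]=\frac{p\,\sigma^2_k+C}{n+p-2},
\eas
which is affine in $\sigma^2_k$ with slope $p/(n+p-2)$. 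Applying $\cov(\sigma^2_k,\sigma^2_{k+1})=\cov(\sigma^2_k,E[\sigma^2_{k+1}\mid\sigma^2_k])$ and dividing by $\var(\sigma^2_k)$ immediately gives $\corr(\sigma^2_k,\sigma^2_{k+1})=p/(n+p-2)$.

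Third, for the $\|\bm\theta\|_2^2$ statement, the key observation is that $\sigma^2_{k+1}$ depends on $\bm\beta_{k+1}$ only through the quadratic form $\|\bm\theta_{k+1}\|_2^2=(\bm\beta_{k+1}-\tilde{\bm\beta})^T\bm A(\bm\beta_{k+1}-\tilde{\bm\beta})$, so $(\|\bm\theta_k\|_2^2)$ is itself a Markov chain whose one-step dynamics factor through $\sigma^2_k$. Using the same two conditionals in the reverse order,
\bas
E\!\left[\|\bm\theta_{k+1}\|_2^2\bigm|\|\bm\theta_k\|_2^2\right]=p\,E\!\left[\sigma^2_k\bigm|\|\bm\theta_k\|_2^2\right]=\frac{p\,\|\bm\theta_k\|_2^2+pC}{n+p-2},
\eas
again affine with slope $p/(n+p-2)$. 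The regression identity then yields the claimed correlation by the same one-line argument. For the boundary case $k=0$, I would either adopt the convention that $\bm\beta_0$ is drawn from $N_p(\tilde{\bm\beta},\sigma^2_0\bm A^{-1})$ (the backwards extension consistent with stationarity) or note that the statement is usually understood for $k\ge1$ given the indexing in~(\ref{gibbs-rr}).

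The calculations are routine; there is no real obstacle. The only point that requires care is the verification that $(\|\bm\theta_k\|_2^2)$ is Markov and stationary, which amounts to observing that the $\sigma^2_{k+1}\mid\bm\beta_{k+1}$ update in~(\ref{gibbs-rr}) depends on $\bm\beta_{k+1}$ solely through $\|\bm\theta_{k+1}\|_2^2$, so that no information in $\bm\beta_{k+1}$ beyond $\|\bm\theta_{k+1}\|_2^2$ is propagated to $\sigma^2_{k+1}$ and hence to $\|\bm\theta_{k+2}\|_2^2$. Once this is in hand, the two correlations are computed by identical affine-regression arguments, transparently explaining why both marginal chains exhibit the same lag-one autocorrelation $p/(n+p-2)$ appearing in Theorem~\ref{thm:tv-rr} and Lemma~\ref{lem:post-corr-rr}.
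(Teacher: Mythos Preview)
Your proposal is correct and essentially coincides with the paper's proof. The paper writes the recursions explicitly as $\sigma^2_{k+1}=(U_{k+1}\sigma^2_k+C)/V_{k+1}$ and $\|\bm\theta_{k+1}\|_2^2=(U_{k+1}/V_k)(\|\bm\theta_k\|_2^2+C)$ with independent $U_{k+1}\sim\chi^2_p$, $V_k,V_{k+1}\sim\chi^2_{n+p}$, and then computes the covariances directly from bilinearity and independence; your tower-property argument computing $E[\sigma^2_{k+1}\mid\sigma^2_k]$ and $E[\|\bm\theta_{k+1}\|_2^2\mid\|\bm\theta_k\|_2^2]$ as affine with slope $p/(n+p-2)$ is the same calculation in different clothing.
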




Although Lemma~\ref{lem:autocorr-rr} asserts that the \emph{norms} of the $\bm\theta_k$ chain are highly dependent, 
their \emph{directions} are independent.  This fact is established
in
the following lemma.

\begin{lem}
\label{lem:directions-iid}
Consider the standard Bayesian regression Gibbs sampler in~(\ref{gibbs-rr}).
If $\sigma^2_0\sim G$, then
the vectors $\bm\theta_k/\|\bm\theta_k\|_2$ are independent for all $k\ge1$.
\end{lem}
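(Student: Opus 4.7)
The plan is to exploit the spherical symmetry of the conditional distribution of $\bm\beta_k$ given $\sigma^2_{k-1}$. First I would reparametrize by writing $\bm\theta_k = R_k U_k$, where $R_k = \|\bm\theta_k\|_2$ and $U_k = \bm\theta_k/\|\bm\theta_k\|_2 \in S^{p-1}$. Since $\bm\beta_k \mid \sigma^2_{k-1}, \bm Y \sim N_p(\tilde{\bm\beta},\sigma^2_{k-1}\bm A^{-1})$, a direct linear-transformation argument gives $\bm\theta_k \mid \sigma^2_{k-1} \sim N_p(\bm 0,\sigma^2_{k-1}\bm I_p)$, which is isotropic. By the standard decomposition of an isotropic Gaussian into its norm and direction, $U_k$ is then (conditionally on $\sigma^2_{k-1}$) uniformly distributed on $S^{p-1}$ and independent of $R_k$; crucially, this conditional law of $U_k$ is the same uniform measure regardless of the value of $\sigma^2_{k-1}$.

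Second, I would observe that the $\sigma^2_k$-update in~(\ref{gibbs-rr}) depends on $\bm\beta_k$ only through the quadratic form $(\bm\beta_k - \tilde{\bm\beta})^T\bm A(\bm\beta_k - \tilde{\bm\beta}) = R_k^2$. Combining this with the first step, for any fixed $K \ge 1$ the joint density of the trajectory $(\sigma^2_0, U_1, R_1, \sigma^2_1, \ldots, U_K, R_K, \sigma^2_K)$ under $\sigma^2_0 \sim G$ factorizes as
\bas
g(\sigma^2_0)\,\prod_{k=1}^K\left[u(U_k)\,h_k(R_k \mid \sigma^2_{k-1})\,q_k(\sigma^2_k \mid R_k)\right],
\eas
where $u$ denotes the uniform density on $S^{p-1}$, $g$ is the density of~$G$, and $h_k, q_k$ are the conditional densities induced by the radial and variance updates. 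The essential feature is that each $u(U_k)$ factor is untangled from every other variable.

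Marginalizing out $\sigma^2_0$ and all of the $R_k$'s and $\sigma^2_k$'s then leaves the joint density of $(U_1, \ldots, U_K)$ equal to $\prod_{k=1}^K u(U_k)$, which is precisely the statement of mutual independence (with each $U_k$ in fact uniform on $S^{p-1}$). Since $K$ is arbitrary, this yields the lemma.

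The argument is quite short once the isotropy of the $\bm\beta_k$-update is noticed; the only step requiring any care is the density factorization displayed above, specifically verifying that the $U_k$-factor of the Gibbs transition kernel is the uniform measure on $S^{p-1}$ with no residual dependence on $\sigma^2_{k-1}$ or $R_k$. I do not anticipate a real obstacle here: the lemma is essentially a consequence of two elementary facts, namely that $\bm\theta_k \mid \sigma^2_{k-1}$ is spherically symmetric and that the $\sigma^2_k$-update sees only the radial coordinate of $\bm\theta_k$.
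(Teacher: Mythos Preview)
Your proof is correct and rests on the same idea as the paper's---the isotropy of $\bm\theta_k\mid\sigma^2_{k-1}$ makes the direction $\bm\theta_k/\|\bm\theta_k\|_2$ uniform on $S^{p-1}$ independently of the radial and variance variables. The paper's proof is a one-liner because it uses the explicit representation $\bm\theta_k=\sqrt{\sigma^2_{k-1}}\,\bm Z_k$ with $\bm Z_k$ i.i.d.\ $N_p(\bm0_p,\bm I_p)$ (established in the proof of Theorem~\ref{thm:tv-rr}), from which $\bm\theta_k/\|\bm\theta_k\|_2=\bm Z_k/\|\bm Z_k\|_2$ and mutual independence follows immediately since each direction is a function of a different~$\bm Z_k$.
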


The
behavior of the
standard Bayesian regression Gibbs sampler
in~(\ref{gibbs-rr}) as
described by Lemmas~\ref{lem:autocorr-rr}~and~\ref{lem:directions-iid} can be
interpreted geometrically.
For any $t>0$, the set $\{\bm\theta\in\reals^p:\|\bm\theta\|_2^2=t\}$ defines a hypersphere in $\bm\theta$-space, which corresponds to a hyperellipsoid in $\bm\beta$-space.
As discussed previously, the value of $\|\bm\theta_k\|_2^2$ is very highly dependent on the value of $\|\bm\theta_{k-1}\|_2^2$ when $p\gg n$.
Hence, in the $p\gg n$ regime, $\bm\theta_k$ is likely to fall on a hypersphere very close to the hypersphere on which $\bm\theta_{k-1}$ falls.
It then follows
that
$\bm\beta_k$ and $\bm\beta_{k-1}$ are also likely to fall on hyperellipsoids that are very close together.
Note that the center of the hyperspheres in $\bm\theta$-space corresponds to the posterior mean~$\tilde{\bm\beta}$ in $\bm\beta$-space.
Thus, the $\bm\beta_k$ chain has difficulty moving to points ``closer to'' or ``farther from'' the posterior mean~$\tilde{\bm\beta}$ as measured by the Mahalanobis distance
$\|\bm\theta\|_2=\|\bm A^{1/2}(\bm\beta-\tilde{\bm\beta})\|_2$.
This behavior is illustrated in Figure~\ref{fig:shell}.  (It should be noted, however, that this behavior only arises when $p$ is large, so an illustration with $p=2$ should be interpreted merely as a conceptual representation of the behavior in question.)
Meanwhile, the behavior of the marginal $\sigma^2_k$ chain as described by Lemma~\ref{lem:autocorr-rr} is somewhat simpler.
The $\sigma^2_k$ simply exhibits a high autocorrelation, i.e., it has difficulty moving at all.


\begin{figure}[htbp]
\floatbox[{\capbeside\thisfloatsetup{capbesideposition={right,center},capbesidewidth=0.7213\textwidth}}]{figure}[\FBwidth]{\caption{%
Geometric representation of the behavior of the standard Bayesian regression Gibbs sampler in~(\ref{gibbs-rr}) in terms of $\bm\theta=\bm A^{1/2}(\bm\beta-\tilde{\bm\beta})$.  If the point on the solid circle represents the value of $\bm\theta_k$, then the value of $\bm\theta_{k+1}$ falls with high probability in the shell of values (region between dashed circles) where $\|\bm\theta_{k+1}\|_2$ is approximately equal to $\|\bm\theta_k\|_2$ (solid circle).
}\label{fig:shell}}
{\includegraphics[scale=0.42]{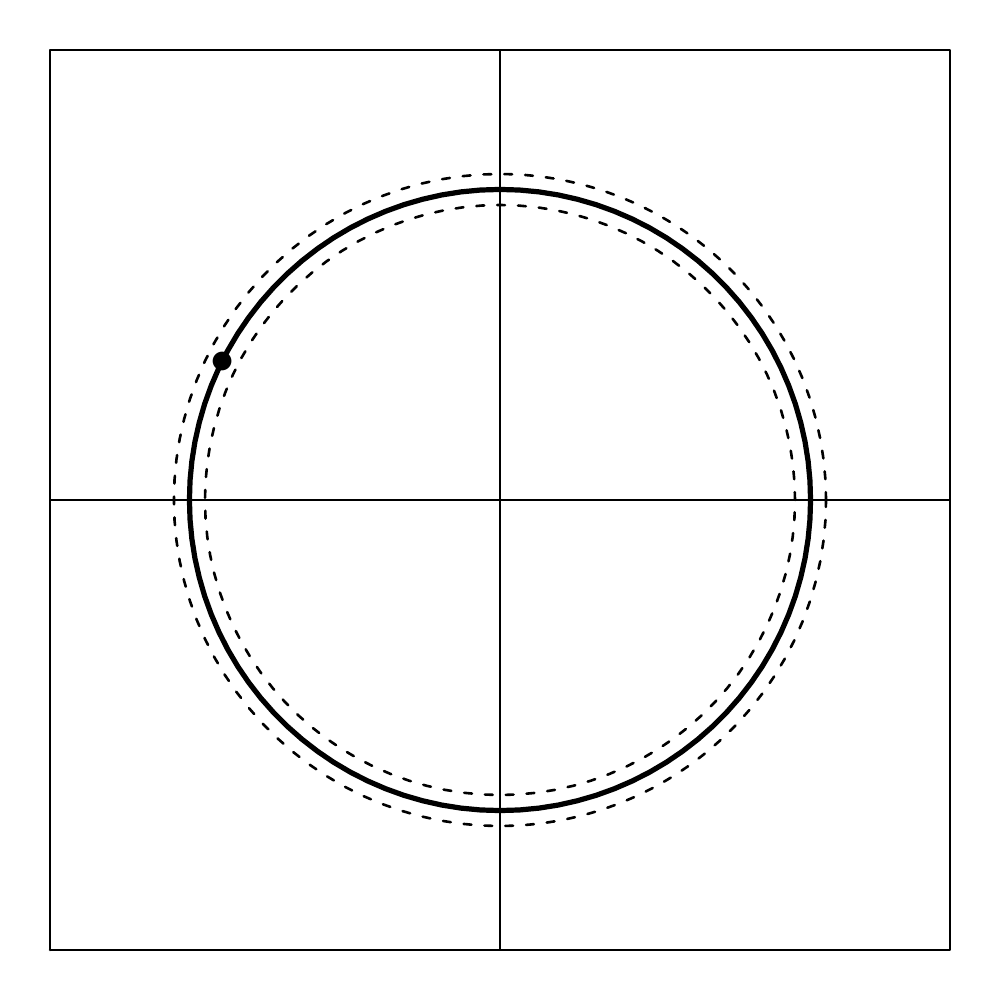}}
\end{figure}

In a practical sense, it is important to understand the manner in which
the convergence problems discussed in the previous paragraph
would affect inference based on Gibbs samples that have been ostensibly (but not actually) drawn from the approximate posterior.
First, note that the aforementioned autocorrelation phenomenon
occurs between $\sigma^2$ and the \emph{norm} of~$\bm\theta$, not the \emph{direction} of~$\bm\theta$.
Thus, even if the chain mixes slowly, the average of the $\bm\theta_k$ iterates should be close to the origin in $\bm\theta$-space.
It follows that the average of the $\bm\beta_k$ iterates should be close to the posterior mean~$\tilde{\bm\beta}$ in $\bm\beta$-space.
Thus, even when $p/(n+p-2)$ is
close to~$1$,
the chain can still yield a good approximation of the posterior mean~$\tilde{\bm\beta}$ of the regression coefficients~$\bm\beta$.
However, there may be substantial error when using the Gibbs sampling output to approximate either the posterior variance of $\bm\beta$ or the posterior mean of~$\sigma^2$.
It is indeed possible for the $\bm\beta_k$ iterates to be distributed
closer
to the posterior mean than they should
be,
in which case approximate credible intervals based on the Gibbs sampling output will be too narrow.
On the other
hand, if
the $\bm\beta_k$ iterates are distributed too far from the posterior mean, then the approximate credible intervals will be too wide.
Thus, uncertainty quantification, one of the fundamental advantages of posterior inference, may be seriously compromised.

\subsection{%
Convergence Diagnostics%
}
\label{subsec:diagnostics}

In practice, MCMC convergence behavior is often assessed through
convergence diagnostics.
These methods can be useful for identifying various kinds of convergence problems in some settings,
though it is well understood that they
do not establish convergence in any rigorous sense.

Consideration of convergence diagnostics is
especially important in high-dimensional settings because the types of convergence problems described in
this subsection and the previous subsection
may not be detectable if only certain convergence diagnostics are considered.
More specifically, note that the primary parameter of interest
in the regression model
is~$\bm\beta$.
As discussed in
Subsection~\ref{subsec:further}
and illustrated in Figure~\ref{fig:shell},
the draws of the $\bm\beta_k$ iterates in the standard Bayesian regression Gibbs sampler can be highly dependent
in terms of their respective \emph{distances}
from their distribution's center, but their \emph{directions} from the center are independent and identically distributed.
Then any convergence diagnostic that focuses on plotting, testing, or otherwise analyzing only linear functions of the~$\bm\beta_k$ is likely to fail to identify any problem.
For example, a trace plot of the marginal chain for any component of~$\bm\beta_k$ will likely appear to have converged and to be uncorrelated.
Similarly, the Geweke diagnostic \citep{geweke1992}, which compares the means of the iterates from earlier versus later portions of the chain, is also likely to fail to detect any problem.

On the other hand,
the above
high-dimensional convergence problems
can be
detected by certain convergence diagnostics
when
applied to certain parameters.  For instance, in standard Bayesian regression, a trace plot of the marginal chain for the nuisance parameter (i.e.,~$\sigma^2_k$) will indeed reveal if the chain is slow to
converge or is
highly autocorrelated.  Slow convergence can similarly be detected by the Geweke diagnostic as applied to the nuisance parameter.
(These same diagnostics can also reveal the problem if they are applied to quadratic, rather than linear, functions of~$\bm\beta_k$
since the behavior of such quadratic functions of~$\bm\beta_k$ may indeed be similar to that of~$\sigma^2_k$, noting the result of Lemma~\ref{lem:autocorr-rr}
).
Hence,
the essence of the message
from the above analysis
is that
it is important to examine convergence diagnostics for \emph{all} parameters, not merely the parameter of interest, if high-dimensional convergence problems are to be detected.
Moreover,
certain
other convergence diagnostics exist that may be more readily able to detect problematic phenomena when
they do occur.
For the regression setting,
diagnostics based on the relative variability within and between various $\bm\beta_k$ chains may indeed be useful.
Examples in this regard include the
Gelman--Rubin
diagnostic \citep{gelman1992}.
Such an approach may not be feasible under the limited computational budget that is often present in high-dimensional settings.

\subsection{%
Convergence Rates for Graphical Models
}
\label{subsec:graphical}

The results obtained for the Gibbs sampler
for
the standard Bayesian regression
framework
can also be applied in the context of a
Bayesian analysis of a
class of Gaussian graphical models.
We first define some notation.
Let $\mathcal G=(V,E)$ be a directed acyclic graph (DAG) with vertex set $V=\{1,\ldots,m\}$ and edge set
$E\subseteq V\times V$.
Assume that
$i>j$
for all $(i,j)\in E$, i.e., assume that $\mathcal G$ is parent-ordered.  Let
\bas
\pa(j)=\{i\in V:{
(i,j)\in E
}
\},\qquad
\fa(j)=\pa(j)\cup\{j\},\qquad
\eas
denote the parents and family of vertex~$j$.
Let $\delta_j=|\pa(j)|$ denote the cardinality of the set of parents of vertex~$j$, which we shall call the degree of vertex~$j$.
Now consider
the Gaussian DAG model in Cholesky form
\basn
\bm X_1,\ldots,\bm X_n\mid\bm D,\bm L&\sim\iid N_m\left[\bm0_m,\,\left(\bm L^{-1}\right)^T\bm D\bm L^{-1}\right],
\label{dag-cholesky}
\easn
where $\bm D=\diag(\sigma^2_{1\mid\pa(1)},\ldots,\sigma^2_{m\mid\pa(m)})$, and where the elements of $\bm L$ are
\bas
L_{ij}=\begin{cases}1&\text{ if }i=j,\\-\beta_{i,j}&\text{ if }i\in\pa(j),\\0&\text{ if }i\notin\fa(j).\end{cases}
\eas
Now suppose we take the prior on $\bm D$ and $\bm L$ to be
$(\bm D,\bm L)\sim\pi_{\bm U,\bm\alpha}$,
that is, the DAG-Wishart prior as defined by
\citet{bendavid2015}.
Combining this prior with the DAG model in~(\ref{dag-cholesky}) yields the \emph{Bayesian DAG framework}.
The posterior distribution of $(\bm D,\bm L)$ then factorizes as
\bas
\pi(\bm D,\bm L\mid\bm X_1,\ldots,\bm X_n)=\prod_{j=1}^m\pi(D_{jj},\bm L_{\pa(j),j}\mid\bm X_1,\ldots,\bm X_n),
\eas
i.e., the posterior distributions of $(D_{jj},\bm L_{\pa(j),j})$ are
mutually
independent for each $j\in\{1,\ldots,m\}$
\citep{bendavid2015}.
Then we can execute separate Gibbs samplers for each of these posterior distributions for each $j\in\{1,\ldots,m\}$ and combine them to yield samples from the overall joint posterior of~$(\bm D,\bm L)$.

To state the form of these Gibbs samplers, we first define an additional item of notation.
For any $m\times m$ matrix~$\bm H$ and any two index subsets $A,B\subseteq\{1,\ldots,m\}$, we write $\bm H_{A,B}$ to denote the submatrix of $\bm H$ formed by retaining the $j$th row if and only if $j\in A$ and the $k$th column if and only if $k\in B$.  (Note that if $A$ or $B$ is a singleton set, i.e., $A=\{a\}$ or $B=\{b\}$, then we will write simply $\bm H_{a,B}$, $\bm H_{A,b}$, or $H_{ab}$.)

Now suppose that
for each $j\in\{1,\ldots,m\}$, we set an initial value $D_{jj;0}>0$.  Then a Gibbs sampler for drawing from the posterior
of~$(D_{jj},\bm L_{\pa(j),j})$
takes the form
\basn
\bm L_{\pa(j),j;k}&=\bm\mu_j+\sqrt{D_{jj;\,k-1}}
\bm W_j^{-1/2}
\bm Z_{j,k},&&\text{ where }
\bm Z_{j,k}\sim N_{\delta_j}(\bm0_{\delta_j},\bm I_{\delta_j}),\notag\\
D_{jj;k}&=\frac1{V_{j,k}}
\left[\left\|\bm W_j^{1/2}\left(\bm L_{\pa(j),j;k}-\bm\mu_j\right)\right\|_2^2+\bm C_{j}\right]
&&\text{ where }V_{j,k}\sim\chi^2_{n+\alpha_j-2},\label{gibbs-dag}
\easn
where $\bm W_j=\bm U_{\pa(j),\pa(j)}+n\bm S_{\pa(j),\pa(j)}$, 
$\bm\mu_j=\bm W_j^{-1}(\bm U_{\pa(j),j}+n\bm S_{\pa(j),j})$, and
$\bm C_j=\bm U_{j,j}+n\bm S_{j,j}-\bm\mu_j^T\bm W_j\bm\mu_j$, and where all of the $\bm Z_{j,k}$ and $V_{j,k}$ are independent.
\citep[See][for the form of the relevant conditional distributions.]{bendavid2015}
Observe
that if we take $\alpha_j=\delta_j+2$, then this Gibbs sampler has the same form as the standard Bayesian regression Gibbs sampler in~(\ref{gibbs-rr}) with
$p=\delta_j$.
We can therefore use our convergence results for standard Bayesian regression to obtain convergence results in the Bayesian DAG
framework.
To state and prove the
said
result, let $\Xi_{j,k}(D_{jj;0})$ denote the distribution of~$(D_{jj;k},\bm L_{
\pa(j),j
;k})$ for the $j$th chain initialized at~$D_{jj;0}$, and let $\Xi_j$ denote the corresponding stationary distribution
for $j\in\{1,\ldots,m\}$.
Then let $\Xi_k(\bm D_0)$ denote the distribution of $(\bm D_k,\bm L_k)$ for the overall joint Gibbs sampler, and let $\Xi$ denote the corresponding stationary distribution.
Also let $\delta_{\max}=\max_{1\le j\le m}\delta_j$.
The following result now gives sharp bounds for the convergence rate of the DAG-Wishart Gibbs sampler.

\begin{thm}
\label{thm:tv-dag}
For the Bayesian DAG Gibbs sampler in~(\ref{gibbs-dag}),
there exist
$0\le\tilde M_1\le\tilde M_2$
such that
\bas
{
\tilde M_1
}
\left(\frac{\delta_{\max}}{n+\delta_{\max}-2}\right)^k
\le d_{
\tv
}\left[\Xi_k(\bm D_0),\,\Xi\right]\le
{
\tilde M_2
}
\left(\frac{\delta_{\max}}{n+\delta_{\max}-2}\right)^k
\eas
for all sufficiently large~$k$.
\end{thm}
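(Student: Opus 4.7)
The plan is to exploit the block-independence of the posterior. As recalled in the excerpt, the DAG-Wishart posterior factorizes as $\pi(\bm D,\bm L\mid\bm X_1,\ldots,\bm X_n)=\prod_{j=1}^m\pi(D_{jj},\bm L_{\pa(j),j}\mid\bm X_1,\ldots,\bm X_n)$, and the updates in~(\ref{gibbs-dag}) for different vertices $j$ use mutually independent innovations $\bm Z_{j,k},V_{j,k}$ and depend only on the $j$th block's previous state. Consequently the joint Gibbs chain is literally a product of $m$ independent marginal chains, so at every step $k$
\[
\Xi_k(\bm D_0)=\bigotimes_{j=1}^m\Xi_{j,k}(D_{jj;0}),\qquad \Xi=\bigotimes_{j=1}^m\Xi_j.
\]
Moreover, with $\alpha_j=\delta_j+2$ each $j$th chain has the same form as the standard Bayesian regression Gibbs sampler in~(\ref{gibbs-rr}) with $p=\delta_j$, so Theorem~\ref{thm:tv-rr} applies coordinate-wise and yields constants $0<M_{1,j}\le M_{2,j}$ (depending on $D_{jj;0}$) with $M_{1,j}(\delta_j/(n+\delta_j-2))^k\le d_{\tv}[\Xi_{j,k}(D_{jj;0}),\Xi_j]\le M_{2,j}(\delta_j/(n+\delta_j-2))^k$.

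For the upper bound I would invoke subadditivity of total variation on product measures, $d_{\tv}(\bigotimes_j P_j,\bigotimes_j Q_j)\le\sum_j d_{\tv}(P_j,Q_j)$ (standard via a product coupling). Combined with the coordinate-wise upper bound from Theorem~\ref{thm:tv-rr} and monotonicity of $x\mapsto x/(n+x-2)$ on $[0,\infty)$ for $n\ge3$, this gives
\[
d_{\tv}[\Xi_k(\bm D_0),\Xi]\le\sum_{j=1}^m M_{2,j}\left(\frac{\delta_j}{n+\delta_j-2}\right)^k\le\left(\sum_{j=1}^m M_{2,j}\right)\left(\frac{\delta_{\max}}{n+\delta_{\max}-2}\right)^k,
\]
so I would take $\tilde M_2=\sum_{j=1}^m M_{2,j}$.

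For the lower bound I would use the (elementary) monotonicity of total variation under marginalization: for any event $A$ in the $j^*$-th factor, $(\bigotimes_j P_j)(A\times\mathrm{rest})-(\bigotimes_j Q_j)(A\times\mathrm{rest})=P_{j^*}(A)-Q_{j^*}(A)$, so $d_{\tv}(\bigotimes_j P_j,\bigotimes_j Q_j)\ge d_{\tv}(P_{j^*},Q_{j^*})$. Choosing $j^*\in\argmax_j\delta_j$ (so $\delta_{j^*}=\delta_{\max}$) and applying Theorem~\ref{thm:tv-rr} to that coordinate gives
\[
d_{\tv}[\Xi_k(\bm D_0),\Xi]\ge d_{\tv}[\Xi_{j^*,k}(D_{j^*j^*;0}),\Xi_{j^*}]\ge M_{1,j^*}\left(\frac{\delta_{\max}}{n+\delta_{\max}-2}\right)^k,
\]
so $\tilde M_1=M_{1,j^*}$ works.

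The main obstacle is not a deep one but concerns the quantifier ``for all sufficiently large~$k$'': the constants $M_{1,j^*}$ and $\sum_j M_{2,j}$ furnished by Theorem~\ref{thm:tv-rr} need not lie in $[0,1]$, and since $d_{\tv}\in[0,1]$ the bounds become informative only once $k$ is large enough that $M_{1,j^*}(\delta_{\max}/(n+\delta_{\max}-2))^k\le 1$ and similarly for the upper side. One must also dispose of the degenerate case $\delta_{\max}=0$ (graph with no edges), where every chain samples exactly from its stationary distribution in one step and the theorem is trivial with $\tilde M_1=\tilde M_2=0$. The substantive work has already been done in Theorem~\ref{thm:tv-rr}; Theorem~\ref{thm:tv-dag} is essentially a clean product-measure transcription of it.
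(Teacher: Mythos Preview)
Your proposal is correct and follows essentially the same route as the paper: apply Theorem~\ref{thm:tv-rr} coordinate-wise, use subadditivity of total variation on product measures for the upper bound, and project onto the coordinate $j^*$ with $\delta_{j^*}=\delta_{\max}$ for the lower bound. The only cosmetic difference is that the paper takes $\tilde M_2=m\max_j\tilde M_{j,2}$ rather than your $\sum_j M_{2,j}$; either works. Your marginalization argument for the lower bound is in fact cleaner than the paper's, which passes through an intermediate inequality $d_{\tv}[\Xi_k(\bm D_0),\Xi]\ge\sum_j\tilde M_{j,1}(\delta_j/(n+\delta_j-2))^k$ that is not generally valid for product measures (though the final bound, keeping only the $J$th term, is of course correct and is exactly what your marginalization delivers).
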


Thus, the geometric rate constant of the Bayesian DAG Gibbs sampler in~(\ref{gibbs-dag}) is bounded away from~$1$ as $m$ and $n$ tend to infinity if and only if $\delta_{\max}=O(n)$, i.e., if and only if the
maximum degree of any vertex
grows no faster than the sample size.
Thus, the convergence complexity of the Bayesian DAG Gibbs sampler is closely related to its
sparsity.
This result provides yet another motivation for desiring sparsity in modern high-dimensional settings.

\section{%
Bayesian Model Selection
}
\label{sec:lasso}


We now turn our attention to Gibbs samplers for
Bayesian model selection.
Important contemporary cases include the Bayesian lasso of \citet{park2008}
and the spike-and-slab prior of \citet{mitchell1988}.
The form of the priors
we consider below
is general enough to accommodate other
``regularized''
Bayesian approaches to regression
as well.
They are also easily extended to model selection in other statistical models.


The
Bayesian analysis of the standard regression model in~(\ref{model-rr})
can be generalized by replacing the prior on $\bm\beta\mid\sigma^2$ with a scale mixture of normal distributions, i.e., by taking
\begin{align}
\bm Y\mid\bm\beta,\sigma^2&\sim N_n(\bm X\bm\beta,\,\sigma^2\bm I_n),\notag\\
\bm\beta\mid\sigma^2,\bm\tau&\sim N_{p}(\bm0_{p},\,\sigma^2\bm D_{\bm\tau}),\label{model-lasso}\\
\pi(\sigma^2)&\propto1/\sigma^2,\notag\\
\bm\tau&\sim\pi(\bm\tau),\notag
\end{align}
where $\bm\tau$ is a $p$-dimensional vector of positive hyperparameters and $\bm D_{\bm\tau}=\diag(\tau_1,\ldots,\tau_p)$.
A variety of priors for $\bm\beta\mid\sigma^2$ can be represented by
the hierarchical construction in~(\ref{model-lasso}) above,
as
will be
discussed in
Subsection~\ref{subsec:special-cases}
below.
We will use the term \emph{Bayesian model selection framework} to refer in general to the model and priors in~(\ref{model-lasso}) above.

Now suppose that we can sample from the conditional posterior $\pi(\bm\tau\mid\bm\beta,\sigma^2,\bm Y)$ for all $\bm\beta\in\reals^p$ and all $\sigma^2>0$,
as is often the case.  (See Subsection~\ref{subsec:special-cases} for examples.)
Then a Gibbs sampler to draw from the joint posterior
under~(\ref{model-lasso})
may be constructed by taking initial values $\bm\beta_0\in\reals^p$ and $\sigma^2_0>0$ and then drawing (for every $k\ge1$)
\basn
\left.\bm\tau_k\given\bm\beta_{k-1},\sigma^2_{k-1},\bm Y\right.&\sim\pi\left(\bm\tau\given\bm\beta=\bm\beta_{k-1},\,\sigma^2=\sigma^2_{k-1},\,\bm Y\right),\notag\\
\bm\beta_k\mid\sigma^2_{k-1},\bm\tau_k,\bm Y&\sim N_p\left(\tilde{\bm\beta}_{\bm\tau_k},\,\sigma^2_{k-1}\bm A_{\bm\tau_k}^{-1}\right),\label{gibbs-lasso}\\
\sigma^2_k\mid\bm\beta_k,\bm\tau_k,\bm Y&\sim\inversegamma\left[\frac{n+p}2,\,\frac{\left\|\bm A_{\bm\tau_k}^{1/2}\left(\bm\beta_k-\tilde{\bm\beta}_{\bm\tau_k}\right)\right\|_2^2+C_{\bm\tau_k}}2\right],\notag
\easn
where $\bm A_{\bm\tau}=\bm X^T\bm X+\bm D_{\bm\tau}^{-1}$ (which is positive-definite), $\tilde{\bm\beta}_{\bm\tau}=\bm A_{\bm\tau}^{-1}\bm X^T\bm Y$, and  $C_{\bm\tau}=\bm Y^T(\bm I_n-\bm X\bm A_{\bm\tau}^{-1}\bm X^T)\bm Y$.

\subsection{%
Special Cases for Model Selection: Bayesian Lasso,
Bayesian Elastic Net,
\& Spike-and-Slab
}
\label{subsec:special-cases}

Suppose that $\tau_1,\ldots,\tau_p$ are assigned independent $\expd(\lambda/2)$ priors,
where $\lambda>0$.
Then the resulting marginal prior on the regression coefficients $\bm\beta\mid\sigma^2$ is a product of Laplacian (double exponential) distributions:
\bas
\pi(\bm\beta\mid\sigma^2)=\prod_{j=1}^p\frac12\sqrt{\frac{\lambda}{\sigma^2}}\exp\left(-\sqrt{\frac\lambda{\sigma^2}}\,\left|\beta_j\right|\right).
\eas
The conditional posterior of $\bm\tau$ is then
\bas
\left.\frac1{\tau_j}\given\bm\beta,\sigma^2,\bm Y\right.\sim\ind\inversegaussian\left(\sqrt{\frac{\lambda\sigma^2}{\beta_j^2}},\,\lambda\right).
\eas
This particular hierarchical representation is the original formulation of what is typically called the Bayesian lasso~\citep{park2008}.

Suppose instead that $\tau_1,\ldots,\tau_p$ are assigned the prior
\bas
\pi(\tau_1,\ldots,\tau_p)=\prod_{j=1}^p\frac{\lambda_1}{2(1-\lambda_2\tau_j)^2}\,\exp\left[-\frac{\lambda_1\tau_j}{2(1-\lambda_2\tau_j)}\right]\,\bm1_{(0,\,1/\lambda_2)}(\tau_j),
\eas
where $\lambda_1,\lambda_2>0$.
Then the resulting marginal prior on the regression coefficients $\beta\mid\sigma^2$ has the form
\bas
\pi(\bm\beta\mid\sigma^2)\propto\prod_{j=1}^p\exp\left(-\sqrt{\frac{\lambda_1}{\sigma^2}}\,\left|\beta_j\right|-\frac{\lambda_2}{2\sigma^2}\sum_{j=1}^p\beta_j^2\right).
\eas
The conditional posterior of~$\bm\tau$ is then
\bas
\left.\left(\frac1{\tau_j}-\lambda_2\right)\given\bm\beta,\sigma^2,\bm Y\right.\sim\ind\inversegaussian\left(\sqrt{\frac{\lambda_1\sigma^2}{\beta_j^2}},\,\lambda_1\right).
\eas
This particular hierarchical representation is known as the Bayesian elastic net \citep{li2010,kyung2010}.

As another example, suppose
instead that the priors on $\tau_1,\ldots,\tau_p$ are again taken to be independent, but for all~$j\in\{1,\ldots,p\}$, take $P(\tau_j=\kappa_j\zeta_j)=w_j=1-P({
\tau_j=\zeta_j
})$, where $\zeta_j>0$ is small, $\kappa_j>0$ is large, and $0<w_j<1$.
This is a slight variant of the prior proposed by \citet{george1993}
to approximate the spike-and-slab prior of \citet{mitchell1988}.
The prior on~$\bm\beta$ is specified conditionally on~$\sigma^2$,
with $\var(\bm\beta\mid\sigma^2,\bm\tau)\propto\sigma^2$.
Then $\tau_j\mid\bm\beta,\sigma^2,\bm Y$ are conditionally independent a~posteriori with
\bas
P\left(\tau_j=\kappa_j\zeta_j\given\bm\beta,\sigma^2,\bm Y\right)
&=1-P\left(\tau_j=\zeta_j\given\bm\beta,\sigma^2,\bm Y\right)\\
&=\left\{1+\displaystyle\frac{(1-w_j)\sqrt{\kappa_j}}{w_j}\exp\left[-\frac{\beta_j^2}{2\sigma^2}\left(\frac{\kappa_j-1}{\kappa_j\zeta_j}\right)\right]\right\}^{-1}
\eas
by straightforward modification of the results of \citet{george1993}.

\subsection{Convergence Properties}

The Gibbs sampler in~(\ref{gibbs-lasso}) for Bayesian model selection is easily executed in practice.
In comparison to standard regression,
the additional step in the Gibbs sampling cycle makes it less tractable in the context of analyzing convergence rates in various $n$ and~$p$ regimes.
Nevertheless, geometric ergodicity has
been obtained for important special cases in
Subsection~\ref{subsec:special-cases}.
The Gibbs sampler for the modified spike-and-slab model was shown by \citet{diebolt1990} to be geometrically ergodic, but without quantitative bounds on the geometric convergence rate.
For the Bayesian lasso Gibbs sampler,
\citet{khare2013} used the method of \citet{rosenthal1995} to establish geometric ergodicity and
derive
a quantitative bound~$\tilde r$ on the geometric convergence rate~$r$.
In Example~\ref{ex:lasso} and Lemma~\ref{lem:ex-lasso}, we showed that this bound~$\tilde r$ tends to~$1$
exponentially fast
as either $p$ or~$n$ tends to infinity.
As this result is an upper bound based on \citeauthor{rosenthal1995}'s method, it is not clear whether it is
sharp in high-dimensional regimes.
Thus, it does not answer the question of the chain's actual convergence rate or the rate at which it may tend to~$1$ as $n$ or $p$ grows without bound.
To address this question for
Gibbs samplers for
the Bayesian lasso, Bayesian elastic net, spike-and-slab priors, and other Bayesian model selection frameworks,
we now provide an autocorrelation result that is similar to that of
Lemma~\ref{lem:autocorr-rr}.

\begin{thm}\label{thm:autocorr-lasso}
Consider the Gibbs sampler in~(\ref{gibbs-lasso}) for Bayesian model selection.
Suppose
that $(\bm\beta_k,\sigma^2_k)\sim\pi(\bm\beta,\sigma^2\mid\bm Y)$.  Then
\bas
\corr(\sigma^2_k,\sigma^2_{k+1})\ge\frac p{n+p-2}
\left[1-\frac{\bm Y^T\bm Y}{p\,\sqrt{\var\left(\sigma^2\mid\bm Y\right)}}\right].
\eas
\end{thm}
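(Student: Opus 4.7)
The plan is to exploit the fact that, given $\bm\tau_{k+1}$, the update $\sigma^2_k\to\bm\beta_{k+1}\to\sigma^2_{k+1}$ has exactly the form of one cycle of the standard regression Gibbs sampler in~(\ref{gibbs-rr}) with $\bm A$ replaced by $\bm A_{\bm\tau_{k+1}}$, so that the computations underlying Lemma~\ref{lem:autocorr-rr} can be carried over. Concretely, I would first combine the inverse-gamma mean formula with the distributional identity $\|\bm A_{\bm\tau_{k+1}}^{1/2}(\bm\beta_{k+1}-\tilde{\bm\beta}_{\bm\tau_{k+1}})\|_2^2\mid\sigma^2_k,\bm\tau_{k+1}\sim\sigma^2_k\cdot\chi^2_p$ to derive the key identity
\bas
E\left[\sigma^2_{k+1}\mid\sigma^2_k,\bm\tau_{k+1}\right]=\frac{p\,\sigma^2_k+C_{\bm\tau_{k+1}}}{n+p-2}.
\eas
This conditional linearity in $\sigma^2_k$ is the main substantive ingredient of the proof.

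Since $\sigma^2_k$ is measurable with respect to $\sigma(\sigma^2_k,\bm\tau_{k+1})$, the iterated-expectation identity $\cov(\sigma^2_k,\sigma^2_{k+1})=\cov(\sigma^2_k,E[\sigma^2_{k+1}\mid\sigma^2_k,\bm\tau_{k+1}])$ applies. Using stationarity---under which $(\bm\beta_k,\sigma^2_k,\bm\tau_{k+1})$ is distributed according to the full joint posterior $\pi(\bm\beta,\sigma^2,\bm\tau\mid\bm Y)$ and in particular $\var(\sigma^2_k)=\var(\sigma^2\mid\bm Y)$---I would expand the right-hand side to obtain
\bas
\cov(\sigma^2_k,\sigma^2_{k+1})=\frac{p}{n+p-2}\,\var(\sigma^2\mid\bm Y)+\frac{1}{n+p-2}\,\cov(\sigma^2_k,C_{\bm\tau_{k+1}}).
\eas
The cross term $\cov(\sigma^2_k,C_{\bm\tau_{k+1}})$ is then controlled by Cauchy--Schwarz together with the estimate $\var(C_{\bm\tau_{k+1}})\le E[C_{\bm\tau_{k+1}}^2]\le(\bm Y^T\bm Y)^2$, where the uniform bound $0\le C_{\bm\tau}\le\bm Y^T\bm Y$ follows from the Woodbury identity $\bm I_n-\bm X\bm A_{\bm\tau}^{-1}\bm X^T=(\bm I_n+\bm X\bm D_{\bm\tau}\bm X^T)^{-1}$, whose right-hand side is a symmetric matrix with eigenvalues in $(0,1]$. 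Dividing the resulting covariance estimate by $\var(\sigma^2\mid\bm Y)$ produces exactly the stated lower bound on the correlation.

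The chief obstacle is recognizing the correct conditioning. One needs to condition on $\bm\tau_{k+1}$ (rather than, say, on $\bm\tau_k$) so that the two innermost steps of the Gibbs cycle match the standard regression updates to which Lemma~\ref{lem:autocorr-rr} applies, and one needs to verify that stationarity really does make $(\bm\beta_k,\sigma^2_k,\bm\tau_{k+1})$ jointly posterior-distributed so that $\cov(\sigma^2_k,C_{\bm\tau_{k+1}})$ can be interpreted as a posterior-level covariance. Once these structural observations are in place, the linearity identity above converts the problem into a one-line Cauchy--Schwarz estimate, and the remaining algebra is routine.
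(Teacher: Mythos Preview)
Your proposal is correct and follows essentially the same approach as the paper: both derive the linear conditional structure $E[\sigma^2_{k+1}\mid\sigma^2_k,\bm\tau_{k+1}]=(p\,\sigma^2_k+C_{\bm\tau_{k+1}})/(n+p-2)$ (the paper via the explicit representation $\sigma^2_{k+1}=(\sigma^2_kU_{k+1}+C_{\bm\tau_{k+1}})/V_{k+1}$ with independent $U_{k+1}\sim\chi^2_p$, $V_{k+1}\sim\chi^2_{n+p}$), split the covariance accordingly, and bound the cross term $\cov(\sigma^2_k,C_{\bm\tau_{k+1}})$ by Cauchy--Schwarz together with $0\le C_{\bm\tau}\le\bm Y^T\bm Y$. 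Your Woodbury justification of the last inequality and your iterated-expectation phrasing are minor cosmetic differences only.
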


Suppose we
make the mild assumption
that $\|\bm Y_n\|_2^2=O(n)$,
and suppose also that $\var(\sigma^2\mid\bm Y)=O(1/n)$, as is commonly the case.
Then it is clear that
the lower bound in Theorem~\ref{thm:autocorr-lasso} tends to~$1$ in the limit as $p_n/n^{3/2}\to\infty$.
Hence, the
MCMC convergence
problems seen in Section~\ref{sec:rr}
for the standard regression model
occur once again
for the Gibbs sampler for Bayesian model selection
if $p_n$ grows too fast relative to~$n$.
This phenomenon
is of course concerning as the lasso is specifically designed for
high-dimensional settings
where $p\gg n$.
The sharpness of the
above lower bound for the autocorrelation
is further investigated numerically in
Subsection~\ref{subsec:numerical-lasso}
below.

\subsection{%
Numerical Results for Bayesian Model Selection
}
\label{subsec:numerical-lasso}

Theorem~\ref{thm:autocorr-lasso}
provides
a lower bound for the autocorrelation between successive iterates of the $\sigma^2_k$ chain
of the
Gibbs sampler in~(\ref{gibbs-lasso}) for Bayesian model selection.
It is not immediately clear whether this bound is sharp,
so it is also instructive to use numerical
approaches
to understand the high-dimensional convergence behavior of Gibbs samplers of
this form.
The left side of Figure~\ref{fig:autocorr} plots the autocorrelation in the $\sigma^2_k$ chain versus $p/(n+p-2)$ for various values of $n\in\{10,30,100\}$ and $p\in\{10,30,100\}$ as observed from runs of the
Gibbs sampler for the Bayesian lasso.
The center and right side of Figure~\ref{fig:autocorr} are similar plots for the Gibbs samplers for
the Bayesian elastic net and the spike-and-slab prior (respectively).
(The exact details of these runs can be found
in Supplemental Section~\ref{sec:supp-numerical-details}.)
Such plots can be useful tools when sharp theoretical bounds are not available.
The strength of the linear relationship in Figure~\ref{fig:autocorr} strongly suggests that the ratio $p/(n+p-2)$ governs the convergence behavior of the Gibbs samplers for a variety of Bayesian regression approaches that can be written in the form specified by~(\ref{model-lasso}).
Thus, although the theoretical result of Theorem~\ref{thm:autocorr-lasso} is
slightly
less refined than those obtained
for the standard regression model
in Section~\ref{sec:rr}, it is clear from Figure~\ref{fig:autocorr} that these Markov chains exhibit the same convergence complexity as in the standard regression setting.

\begin{figure}[htbp]
\centering
\includegraphics[scale=0.42]{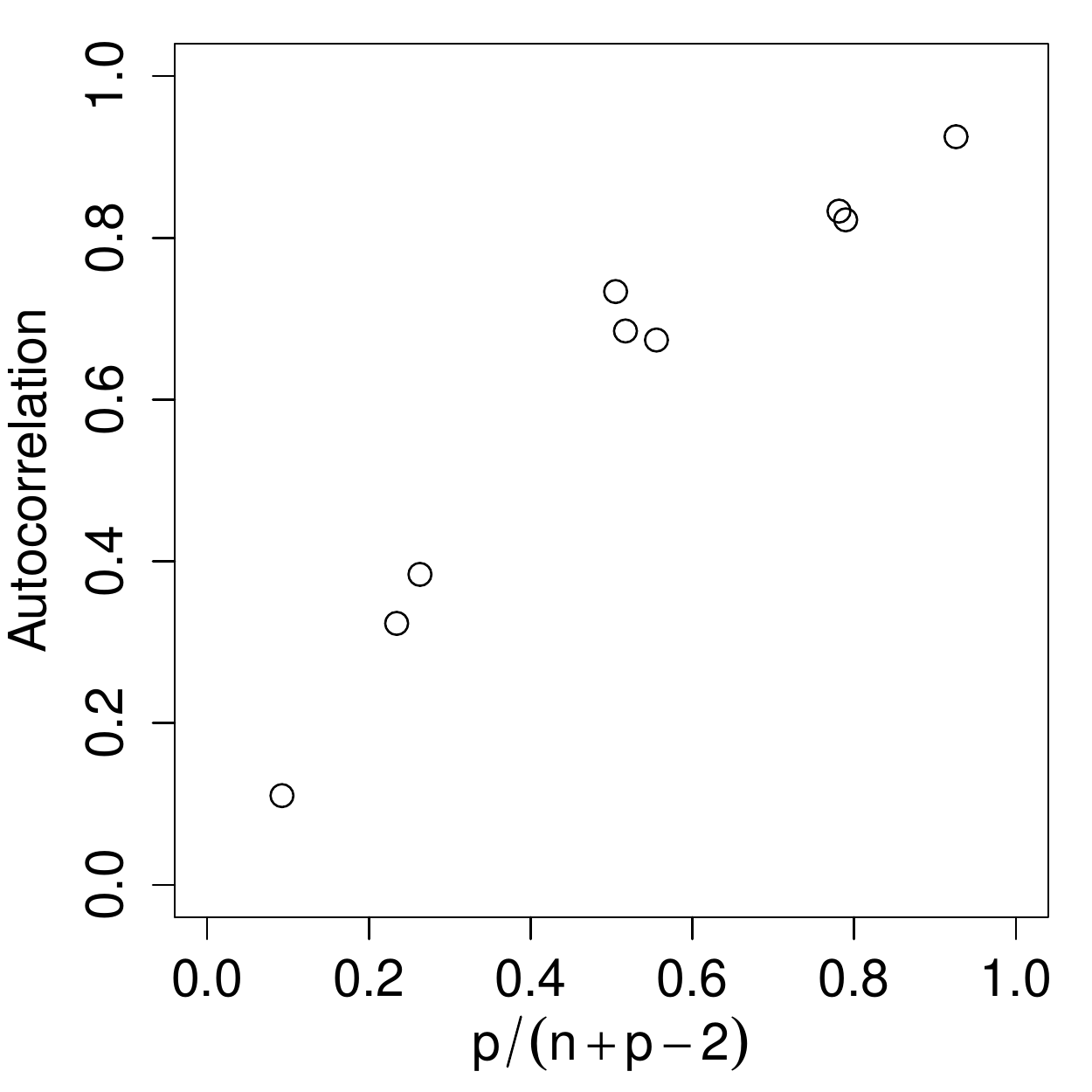}
\includegraphics[scale=0.42]{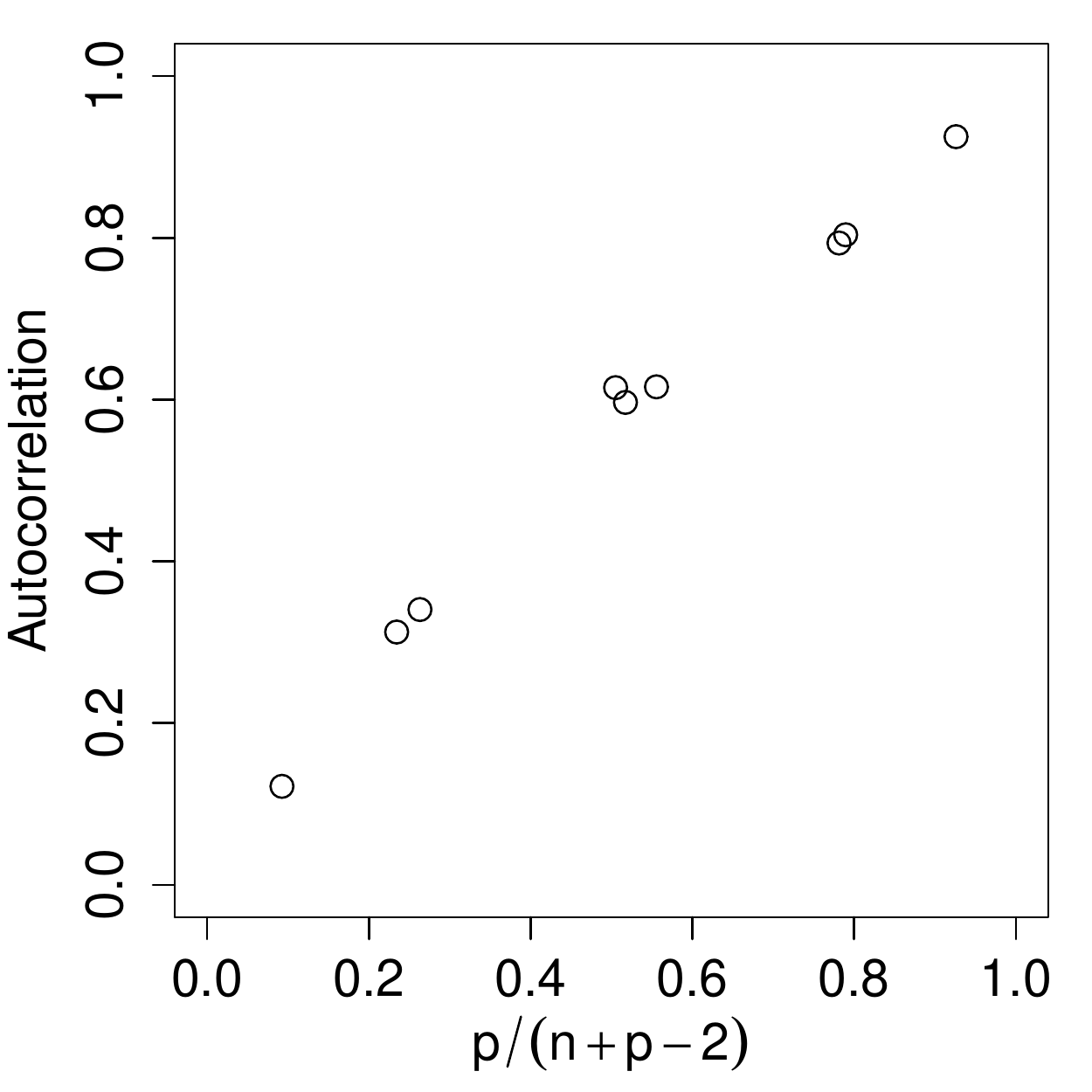}
\includegraphics[scale=0.42]{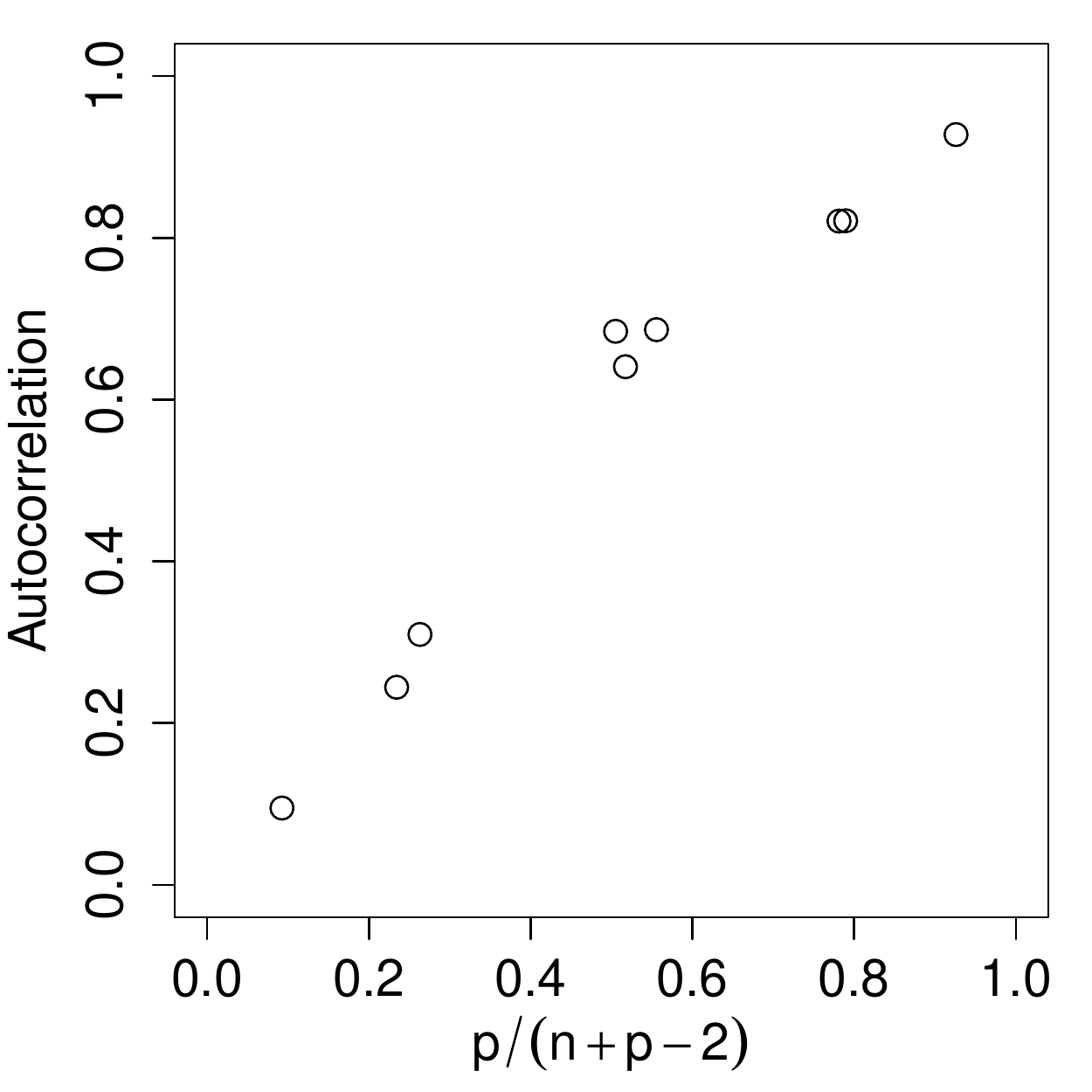}
\vspace*{-0.5em}
\caption{%
Autocorrelation of the $\sigma^2_k$ chain versus $p/(n+p-2)$ for the
Gibbs sampler for the Bayesian lasso (left), Bayesian elastic net (center), and the spike-and-slab prior (right).
See Supplemental Section~\ref{sec:supp-numerical-details}
for details
of the generation of the various numerical
quantities, vectors, and matrices
that were used in the execution of these chains.
}
\label{fig:autocorr}
\end{figure}

In summary, our theoretical and numerical analysis above indicates that regardless of the type or form of regression
(standard regression, lasso, elastic net, or spike-and-slab),
there is a universal geometric convergence rate of the form $r=p/(n+p-2)$.

\section{Multivariate
Location Models
}
\label{sec:mean}

A conceptually simple but centrally important class of models is the
class of
multivariate mean or location
models.
We now consider the convergence complexity of Markov chains associated with
a
Bayesian analysis of such models.
We shall see that though there are some similarities between location models and regression, convergence complexities are vastly different.


Let
$\bm X_1,\ldots,\bm X_n$ be observed data vectors taking values in~$\reals^p$, and
consider the
multivariate mean model
and priors
\begin{align}
\bm X_i\mid\bm\mu,\sigma^2&\sim\iid N_p(\bm\mu,\,\sigma^2\bm I_p),\notag\\
\bm\mu\mid\sigma^2&\sim\iid N_p(\bm0_p,\,
{
\lambda^{-1}
}
\sigma^2\bm I_p),\label{model-mean}\\
\pi(\sigma^2)&\propto1/\sigma^2\;\text{ for all }\sigma^2>0,\notag
\end{align}
where $i\in\{1,\ldots,n\}$ with
$n\ge3$,
and where
$\lambda>0$
is known.
Then a Gibbs sampler to draw from the joint posterior under~(\ref{model-mean}) may be constructed by taking an initial value $\sigma^2_0>0$ and then drawing (for every $k\ge1$)
\basn
\bm\mu_k\mid\sigma^2_{k-1},\bm X_1,\ldots,\bm X_n&\sim N_p\left(
\tilde{\bm\mu},\,\frac{\sigma^2_{k-1}}{n+\lambda}\bm I_p\right),\notag\\
\sigma^2_k\mid\bm\mu_k,\bm X_1,\ldots,\bm X_n&\sim\inversegamma\left[\frac{np+p}2,\,\frac{(n+\lambda)\left\|\bm\mu_k-\tilde{\bm\mu}\right\|_2^2+C}2\right],\label{gibbs-mean}
\easn
where $\tilde{\bm\mu}=(n+\lambda)^{-1}\sum_{i=1}^n\bm X_i$ and $C=\sum_{i=1}^n\|\bm X_i\|_2^2-(1+n^{-1}\lambda)\|\tilde{\bm\mu}\|_2^2$.

\subsection{Convergence Properties}
\label{subsec:cvgc-mean}

The
convergence properties of the
Gibbs sampler in~(\ref{gibbs-mean}) for the multivariate mean model
can be obtained using the results
previously
established in Section~\ref{sec:rr}
for the standard regression Gibbs sampler
in~(\ref{gibbs-rr}).
For every $k\ge0$, let $F_k(\sigma^2_0)$ denote the joint distribution of $(\bm\mu_k,\sigma^2_k)$ for the
Gibbs sampler of the multivariate mean model in~(\ref{gibbs-mean})
started with initial value~$\sigma^2_0$.  Let $F$ denote the stationary distribution of this chain, i.e., the true
joint posterior of $(\bm\mu,\sigma^2)$.
Then we have the following result.

\begin{thm}
\label{thm:tv-mean}
Consider the Gibbs sampler for the multivariate mean model in~(\ref{gibbs-mean}).
Then there exist $0<M_1\le M_2$ such that
\bas
M_1\left(\frac{p}{np+p-2}\right)^k\le d_{\tv}\left[F_k\left(\sigma^2_0\right),\,F\right]\le M_2\left(\frac{p}{np+p-2}\right)^k
\eas
for every $k\ge0$.
\end{thm}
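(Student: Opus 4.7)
The plan is to derive Theorem~\ref{thm:tv-mean} directly from Theorem~\ref{thm:tv-rr} by recognizing that the multivariate mean Gibbs sampler in~(\ref{gibbs-mean}) is a special case of the standard regression Gibbs sampler in~(\ref{gibbs-rr}) under the substitution $n \mapsto np$. The informal statement is that the multivariate mean model with $n$ observations of dimension~$p$ behaves, from the standpoint of the Gibbs dynamics, exactly like a regression problem with effective sample size $np$ and parameter dimension~$p$; this is precisely what one should expect given that a shared $\sigma^2$ across all $np$ scalar components ``couples'' them into a single InverseGamma draw.

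To execute the reduction, I would stack the data into $\bm Y^\star = (\bm X_1^T, \ldots, \bm X_n^T)^T \in \reals^{np}$ and introduce the design matrix $\bm X^\star = (\bm I_p, \bm I_p, \ldots, \bm I_p)^T \in \reals^{np \times p}$, so that~(\ref{model-mean}) is equivalent to the regression model $\bm Y^\star \mid \bm\mu, \sigma^2 \sim N_{np}(\bm X^\star \bm\mu,\, \sigma^2 \bm I_{np})$ with the same priors on $(\bm\mu, \sigma^2)$ after identifying $\bm\mu$ with the regression coefficient $\bm\beta$. The relevant computations are $(\bm X^\star)^T \bm X^\star = n \bm I_p$, yielding $\bm A^\star := (\bm X^\star)^T \bm X^\star + \lambda \bm I_p = (n+\lambda)\bm I_p$ and $(\bm A^\star)^{-1}(\bm X^\star)^T \bm Y^\star = (n+\lambda)^{-1}\sum_{i=1}^n \bm X_i = \tilde{\bm\mu}$. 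Substituting these into~(\ref{gibbs-rr}) reduces the quadratic form $(\bm\beta - \tilde{\bm\beta})^T \bm A (\bm\beta - \tilde{\bm\beta})$ to $(n+\lambda)\|\bm\mu_k - \tilde{\bm\mu}\|_2^2$, collapses the covariance of the $\bm\beta_k$ draw to $\sigma^2_{k-1}/(n+\lambda) \bm I_p$, and promotes the shape parameter from $(n_{\mathrm{reg}} + p)/2$ to $(np + p)/2$, so that~(\ref{gibbs-rr}) under this reformulation coincides exactly with~(\ref{gibbs-mean}) as a transition kernel on $(\bm\mu_k, \sigma^2_k)$.

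Once the equivalence of transition kernels is in hand, the two chains started at the same $\sigma^2_0$ produce identical distributions $F_k(\sigma^2_0)$ at every step and identical stationary distributions~$F$, so their total variation distances from stationarity coincide. Applying Theorem~\ref{thm:tv-rr} with $n$ replaced by $np$ then delivers the announced bounds $M_1 (p/(np + p - 2))^k \le d_\tv[F_k(\sigma^2_0), F] \le M_2 (p/(np + p - 2))^k$ for some $0 < M_1 \le M_2$. The only real obstacle is a minor piece of bookkeeping: checking that the technical hypothesis $n \ge 5$ used in Theorem~\ref{thm:tv-rr} translates to $np \ge 5$ under the reduction, which holds automatically whenever $p \ge 2$ given the standing assumption $n \ge 3$ in~(\ref{model-mean}), while the remaining $p = 1$ case is itself a scalar regression covered directly by Theorem~\ref{thm:tv-rr}. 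Otherwise the reduction is mechanical and inherits all of the sharpness (both upper and lower) of the regression result.
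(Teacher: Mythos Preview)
Your proposal is correct and takes essentially the same approach as the paper: both reduce the multivariate mean Gibbs sampler to the standard regression Gibbs sampler under the substitution $n\mapsto np$, with the paper doing so by deriving the marginal $\sigma^2_k$ chain directly and noting it coincides with~(\ref{marginal-rr}) with $V_k\sim\chi^2_{np+p}$, while you achieve the same reduction via the explicit stacking $\bm Y^\star\in\reals^{np}$, $\bm X^\star=(\bm I_p,\ldots,\bm I_p)^T$. The only minor loose end is your handling of the edge case $p=1$ with $n\in\{3,4\}$, where $np<5$ and Theorem~\ref{thm:tv-rr} is not formally stated; the paper does not address this explicitly either, and in fact the $n\ge5$ hypothesis is used only in the Wasserstein bounds (to ensure finite posterior variance of~$\sigma^2$), not in the total-variation argument itself.
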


Despite the apparent similarities between the standard regression model and the multivariate location model, it
is clear from
Theorem~\ref{thm:tv-mean}
that
the respective Gibbs samplers display different convergence complexities.
In particular, as $p\to\infty$ with $n$ fixed, the geometric convergence rate of the standard Bayesian regression Gibbs sampler tends to~$1$.
However, the
convergence rate of the
Gibbs sampler for the multivariate mean model
tends to $1/(n+1)$.
Moreover, $r\le 1/(n-1)$ for
any~$p$.
Thus, the convergence rate of the
Gibbs sampler for the multivariate mean model
is bounded away from~$1$ in all $n$ and $p$ regimes.


As was the case in Bayesian regression,
we can again establish sharp results in terms of Wasserstein distance.
These results can be found in
Supplemental Section~\ref{sec:supp-mean}.

\section{Normal Hierarchical Model}
\label{sec:nh}

Hierarchical models are an important class of models that have found widespread applications in many fields.  They have thus become a staple in contemporary Bayesian inference.  Their flexibility and ability to avoid overfitting makes them ideal for modern high-dimensional settings.  Hierarchical models are also ideally suited for Bayesian analysis since they are readily amenable to posterior inference using Gibbs samplers.
To further investigate notions of convergence complexity, we
thus
turn our attention to Markov chains associated with
a
Bayesian analysis of a common type of model:
the normal hierarchical model.
We begin by first considering a simplified version of such a model in which the variance components are known.  We subsequently investigate the unknown-variance version of this 
hierarchical
model as well.


Let
$\bm X_1,\ldots,\bm X_n$ be observed data vectors taking values in~$\reals^p$, and
consider the
following hierarchical model and priors:
\begin{align}
\bm X_i\mid\bm\psi_i&\sim\ind N_p(\bm\psi_i,\sigma^2\bm I_p),\notag\\
\bm\psi_i\mid\bm\mu&\sim\iid N_p(\bm\mu,\tau^2\bm I_p),\label{model-nh}\\
\pi(\bm\mu)&\propto1\;\text{ for all }\bm\mu\in\reals^p,\notag
\end{align}
where $i\in\{1,\ldots,n\}$ and where $\sigma^2>0$ and $\tau^2>0$ are known.
Then a Gibbs sampler to draw from the joint posterior under~(\ref{model-nh}) may be constructed by taking an initial value $\bm\mu_0\in\reals^p$ and then drawing (for every $k\ge1$)
\basn
\bm\psi_{k,i}\mid\bm\mu_{k-1},\bm X_1,\ldots,\bm X_n&\sim\ind N_p\left[
(1-r)\bm X_i+r\bm\mu_{k-1},\;\tau^2r\bm I_p\right],\notag\\
\bm\mu_k\mid\bm\psi_{k,1},\ldots,\bm\psi_{k,n},\bm X_1,\ldots,\bm X_n&\sim N_p\left(\frac1n\sum_{i=1}^n\bm\psi_{k,i},\;\frac{\tau^2}n\bm I_p\right),\label{gibbs-nh}
\easn
for each $i\in\{1,\ldots,n\}$, where $r=\sigma^2/(\sigma^2+\tau^2)$.

\subsection{Convergence Properties}
\label{subsec:cvgc-nh}

We now establish sharp bounds
for the geometric
convergence
rate of the
Gibbs sampler in~(\ref{gibbs-nh}) for the normal hierarchical model.
For
every $k\ge0$, let $H_k(\bm\mu_0)$ denote
the distribution of $\bm\mu_k$ for
the
normal hierarchical model Gibbs sampler in~(\ref{gibbs-nh})
started with initial value~$\bm\mu_0$,
and let $H$ denote
the
true marginal posterior of~$\bm\mu$. 
Then we have the following result.


\begin{thm}
\label{thm:tv-nh}
Consider the Gibbs sampler for the normal hierarchical model in~(\ref{gibbs-nh}).
Then
\bas
\sqrt{\frac n{2(\sigma^2+\tau^2)}}\left\|\bm\mu_0\right\|_2\,r^k
\le
d_{\tv}\left[H_k(\bm\mu_0),H\right]
\le\sqrt{\frac n{\sigma^2+\tau^2}}\left\|\bm\mu_0\right\|_2\,r^k
\eas
for all sufficiently large~$k$,
where $r=\sigma^2/(\sigma^2+\tau^2)$.
\end{thm}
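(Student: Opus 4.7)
The plan is to observe that once one integrates $\bm\psi_k$ out of a single Gibbs cycle, the marginal sequence $\{\bm\mu_k\}$ is itself a Markov chain — in fact a linear Gaussian AR(1) chain — so that $H_k(\bm\mu_0)$ and $H$ are both explicit multivariate Gaussians and $d_{\tv}[H_k(\bm\mu_0),H]$ reduces to a computable Gaussian TV. First I would carry out the marginalization: given $\bm\mu_{k-1}$, each $\bm\psi_{k,i}$ is $N_p((1-r)\bm X_i+r\bm\mu_{k-1},\,\tau^2 r\bm I_p)$ independently, so the sample average $n^{-1}\sum_i\bm\psi_{k,i}$ is $N_p((1-r)\bar{\bm X}+r\bm\mu_{k-1},\,\tau^2 r n^{-1}\bm I_p)$, and adding the subsequent $\bm\mu_k$-noise with variance $\tau^2 n^{-1}\bm I_p$ gives the one-step transition
\bas
\bm\mu_k\mid\bm\mu_{k-1}\sim N_p\!\left((1-r)\bar{\bm X}+r\bm\mu_{k-1},\;\frac{\tau^2(1+r)}{n}\bm I_p\right).
\eas

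Next I would iterate this AR(1) from $\bm\mu_0$. A direct induction on $k$ gives that the mean telescopes to $r^k\bm\mu_0+(1-r^k)\bar{\bm X}$, while the accumulated noise variance is the geometric sum $\sum_{\ell=1}^{k}r^{2(k-\ell)}\tau^2(1+r)/n$, which simplifies via $1-r=\tau^2/(\sigma^2+\tau^2)$ to $v(1-r^{2k})\bm I_p$ with $v=(\sigma^2+\tau^2)/n$. Hence $H_k(\bm\mu_0)=N_p\!\left(r^k\bm\mu_0+(1-r^k)\bar{\bm X},\,v(1-r^{2k})\bm I_p\right)$. The stationary distribution $H$ is identified by marginalizing $\bm\psi_i$ out of the model, yielding a flat-prior posterior $H=N_p(\bar{\bm X},v\bm I_p)$; sending $k\to\infty$ in $H_k(\bm\mu_0)$ recovers $H$ as a consistency check.

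With both distributions in closed form, $d_{\tv}[H_k(\bm\mu_0),H]$ is the TV distance between two isotropic Gaussians differing in mean by $r^k(\bm\mu_0-\bar{\bm X})$ and in covariance by a factor of $1-r^{2k}$. By the rotational symmetry of the isotropic covariances, the computation reduces to a one-dimensional TV between $N(r^k\|\bm\mu_0-\bar{\bm X}\|,\,v(1-r^{2k}))$ and $N(0,v)$. For sufficiently large $k$ the variance discrepancy $vr^{2k}$ is a lower-order perturbation, so the TV is driven by the mean shift and is of the form $2\Phi(y)-1$ with $y$ proportional to $r^k\|\bm\mu_0-\bar{\bm X}\|/\sqrt v$. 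The upper bound follows from $2\Phi(y)-1\le y$ together with a routine estimate absorbing the variance mismatch into a multiplicative factor tending to one; the lower bound is obtained by the 1D projection onto the direction of $\bm\mu_0-\bar{\bm X}$ (TV of a projection lower-bounds the full TV) and linearizing $2\Phi(y)-1$ near $0$.

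The main obstacle is the bookkeeping for matching the explicit multiplicative constants $\sqrt{n/(\sigma^2+\tau^2)}$ and $\sqrt{n/(2(\sigma^2+\tau^2))}$ in the statement. Concretely, one must quantify how the variance-ratio factor $1-r^{2k}$ affects the mean-shift-driven TV so that the resulting correction can be swept into the ``for all sufficiently large $k$'' qualifier without degrading the $r^k$ decay, and one must choose the one-dimensional bounding devices on $2\Phi(y)-1$ carefully enough to yield the stated constants. The geometric decay itself, however, is automatic from the AR(1) structure in Steps 1--2.
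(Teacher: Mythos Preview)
Your approach is essentially the same as the paper's: both derive the marginal $\bm\mu_k$ chain as a Gaussian AR(1), iterate to obtain $H_k(\bm\mu_0)$ explicitly, and then bound the total variation distance between two isotropic Gaussians. Two minor points of comparison are worth noting. First, you correctly observe that the mean shift is $r^k(\bm\mu_0-\bar{\bm X})$; the paper handles this by first reducing (via translation invariance of $d_{\tv}$) to the case $\bar{\bm X}=\bm0_p$, which is why $\|\bm\mu_0\|_2$ rather than $\|\bm\mu_0-\bar{\bm X}\|_2$ appears in the statement. Second, for the upper bound the paper makes your ``routine estimate absorbing the variance mismatch'' concrete by a triangle inequality through the intermediate law $\tilde H_k(\bm\mu_0)=N_p\!\left(r^k\bm\mu_0,\,v\bm I_p\right)$: the piece $d_{\tv}[\tilde H_k,H]$ is a pure mean-shift term controlled by $\sqrt{n/(2(\sigma^2+\tau^2))}\,\|\bm\mu_0\|_2\,r^k$, while $d_{\tv}[H_k,\tilde H_k]$ is a pure variance-ratio term that is $O(r^{2k})$ and hence absorbed for large~$k$. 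This is exactly the device you gesture at, so your plan would go through once you write it down.
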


Note in particular that Theorem~\ref{thm:tv-nh} provides
an expression
for the geometric convergence rate~$r$ that does not depend on $n$ or~$p$.
Thus, the
Gibbs sampler specified in~(\ref{gibbs-nh}) for the normal hierarchical model
does not exhibit the same high-dimensional convergence problems that were seen in Sections~\ref{sec:rr}~and~\ref{sec:lasso} for the regression model and its extensions.
More precisely, the geometric convergence rate~$r_{n,p}$ of the Gibbs sampler for the normal hierarchical model is (trivially) bounded away from~$1$.
In this respect, the convergence complexity of the normal hierarchical model is similar to that of the location model in
Section~\ref{sec:mean}.
(Note also that this result shows that the Gibbs sampler converges faster
when the population variance~$\tau^2$ takes larger values.
)

Similarly to the autocorrelation result in Section \ref{sec:rr}, it can be shown that if $\bm\mu_0\sim H$ (i.e., if the chain is stationary), then
\basn
\corr\left(\mu_{k,j},\mu_{k+1,j}\right)=\frac{\sigma^2}{\sigma^2+\tau^2}
\label{autocorr-nh}
\easn
for each $j\in\{1,\ldots,p\}$.
The autocorrelation result in~(\ref{autocorr-nh}) above contrasts with the autocorrelation result for standard regression in Lemma~\ref{lem:autocorr-rr} in the same way that
the convergence rate in Theorem~\ref{thm:tv-nh} constrasts with the convergence rate for standard regression in Theorem~\ref{thm:tv-rr}.

We can once again establish sharp results in terms of Wasserstein distance as well.
These results can be found in
Supplemental Section~\ref{sec:supp-nh}.


\subsection{%
Unknown Variances
\& Convergence Rates
}
\label{subsec:nhuv}

The normal hierarchical model in~(\ref{model-nh}),
in which the variances are known, 
is
simple enough to yield a Gibbs sampler that
permits the derivation of sharp bounds for the geometric convergence rate.
It is also of interest to consider a more complex model in which the variances are unknown.
Thus, suppose we have
\basn
\bm X_i\mid\bm\psi_i,\sigma^2&\sim\ind N_p(\bm\psi_i,\sigma^2\bm I_p),\notag\\
\bm\psi_i\mid\bm\mu,\tau^2&\sim\iid N_p(\bm\mu,\tau^2\bm I_p),\notag\\
\sigma^2&\sim\inversegamma(a_\sigma/2,\,s_\sigma/2),\label{model-nhuv}\\
\tau^2&\sim\inversegamma(a_\tau/2,\,s_\tau/2),\notag\\
\pi(\bm\mu)&\propto1\;\text{ for all }\bm\mu\in\reals^p,\notag
\easn
where $a_\sigma,s_\sigma,a_\tau,s_\tau>0$ are all
known.
The posterior
for the above setup
is
less tractable, and MCMC is indeed required to sample from the posterior.  A Gibbs sampler to draw from
this posterior
takes
initial values $\bm\mu_0\in\reals^p$ and $\sigma^2_0,\tau^2_0>0$ and then draws (for every $k\ge1$)
\basn
\bm\psi_{k,i}\mid\bm\mu_{k-1},\sigma^2_{k-1},\tau^2_{k-1},
\bm X
&\sim\ind N_p\left[
(1-\rho_{k-1})\bm X_i+\rho_{k-1}\bm\mu_{k-1}{
,
}\;\frac{\sigma^2_{k-1}\tau^2_{k-1}}{\sigma^2_{k-1}+\tau^2_{k-1}}\bm I_p\right],\notag\\
\bm\mu_k\mid\bm\psi_{k,1},\ldots,\bm\psi_{k,n},\sigma^2_{k-1},\tau^2_{k-1},
\bm X
&\sim N_p\left(\frac1n\sum_{i=1}^n\bm\psi_{k,i},\;\frac{\tau^2_{k-1}}n\bm I_p\right),\label{gibbs-nhuv}\\
\sigma^2_k\mid\bm\psi_{k,1},\ldots,\bm\psi_{k,n},\bm\mu_k,\tau^2_{k-1},
\bm X
&\sim\inversegamma\left[\frac{a_\sigma+np}2,\,\frac12\left(s_\sigma+\sum_{i=1}^n\left\|\bm X_i-\bm\psi_{k,i}\right\|_2^2\right)\right],\notag\\
\tau^2_k\mid\bm\psi_{k,1},\ldots,\bm\psi_{k,n},\bm\mu_k,\sigma^2_k,
\bm X
&\sim\inversegamma\left[\frac{a_\tau+np}2,\,\frac12\left(s_\tau+\sum_{i=1}^n\left\|\bm\psi_{k,i}-\bm\mu_k\right\|_2^2\right)\right],\notag
\easn
for each $i\in\{1,\ldots,n\}$, where $\rho_{k-1}=\sigma^2_{k-1}/(\sigma^2_{k-1}+\tau^2_{k-1})$
and $\bm X=(\bm X_1,\ldots,\bm X_n)$.

Since sharp theoretical results for
the convergence rate of the above Gibbs sampler are not as readily quantifiable,
we now provide a numerical demonstration of the behavior of these chains in relation to $n$ and~$p$.  Figure~\ref{fig:autocorr-nhuv} plots the autocorrelation in the $\tau^2_k$ chain for various values of $n\in\{10,30,60,100{
,150,210
}\}$ and $p\in\{3,10,30,100{
,300
}\}$
for the Gibbs sampler in~(\ref{gibbs-nhuv}) for the unknown-variance normal hierarchical model.
(The exact details of these runs can be found
in Supplemental Section~\ref{sec:supp-numerical-details}.)
Figure~\ref{fig:autocorr-nhuv} also
plots the same information
as a three-dimensional autocorrelation surface and an autocorrelation contour plot.
These plots, which we call
dimensional autocorrelation function (DACF) plots, depict
the autocorrelation as a function of the sample size~$n$ and the dimension~$p$.
It is clear from Figure~\ref{fig:autocorr-nhuv} that the convergence behavior of the 
chain corresponding to the unknown-variance hierarchical model
is dramatically different
from the known-variance case in high dimensions.
In fact, both large sample sizes and high dimensions affect the autocorrelation adversely.
The
seemingly harmless practice of putting a prior on a difficult-to-specify quantity in fact leads to a chain that converges slowly for
large
$n$ or~$p$.
Thus, in the unknown-variance setting, it appears that a sample-starved high-dimensional hierarchical model enjoys better convergence than a sample-rich high-dimensional hierarchical model.
In this sense the hierarchical model is better suited to ``large~$p$, small~$n$'' applications than
to ``large~$p$, large~$n$'' applications.


\begin{figure}[htbp]
\centering
\includegraphics[scale=0.42]{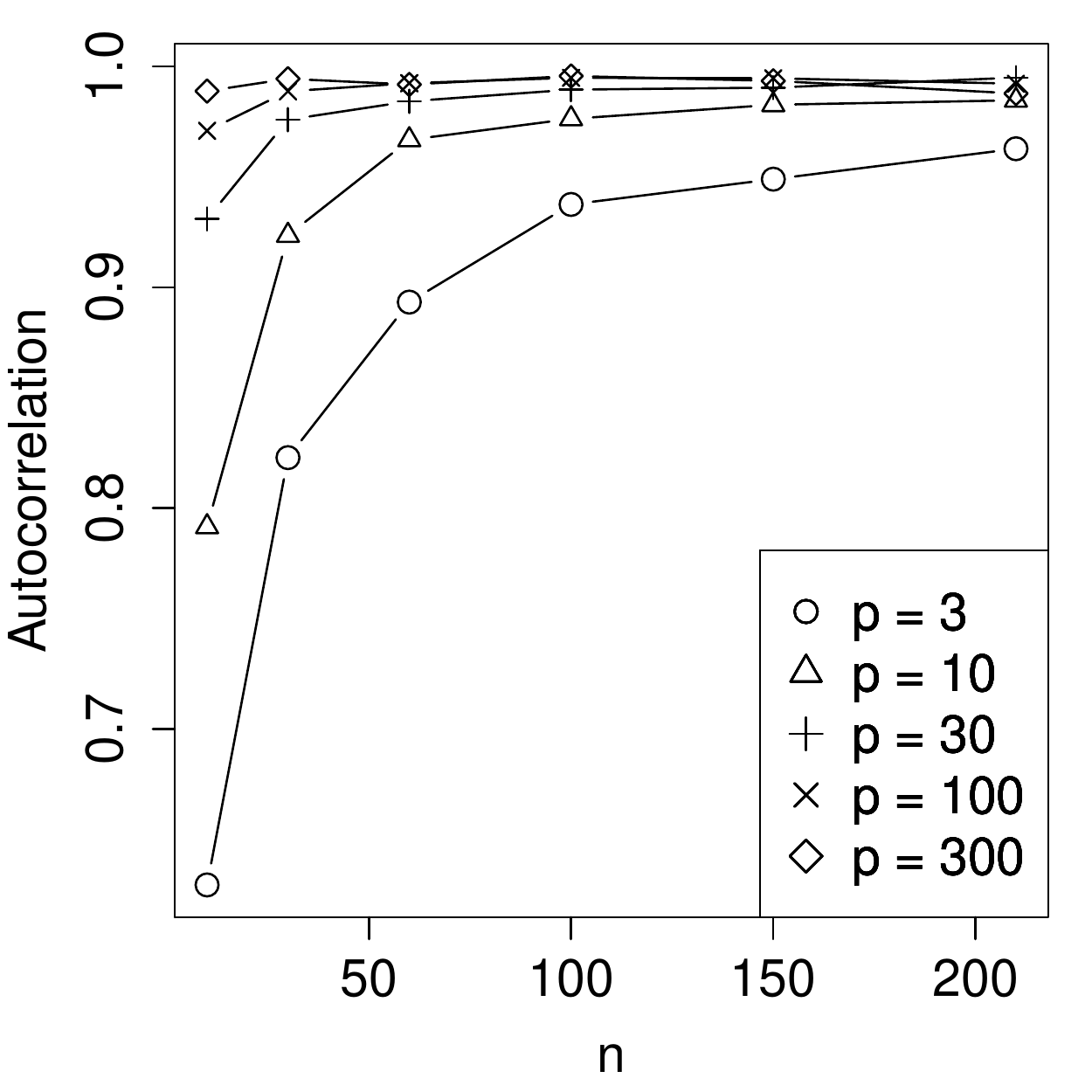}
\includegraphics[scale=0.42]{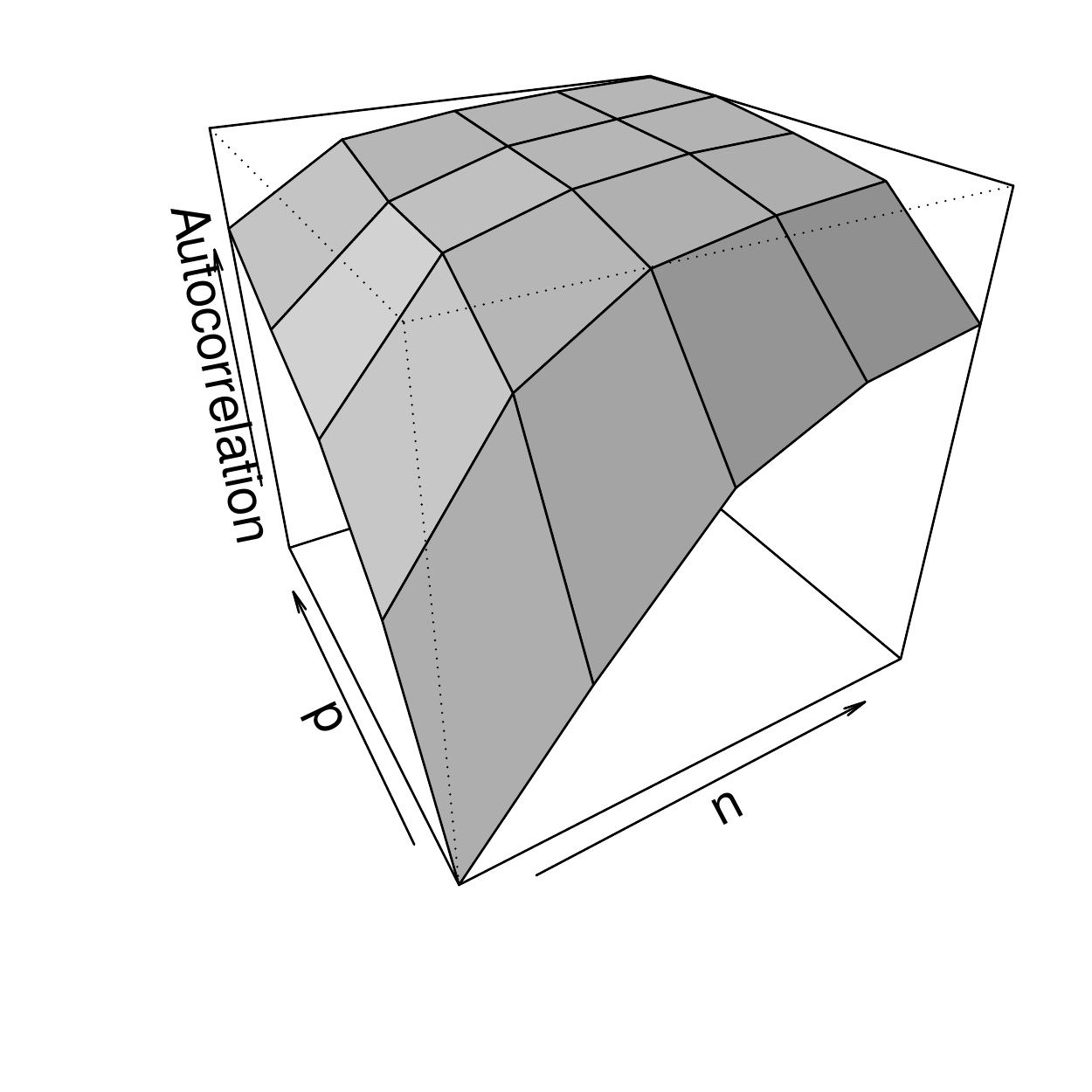}
\includegraphics[scale=0.42]{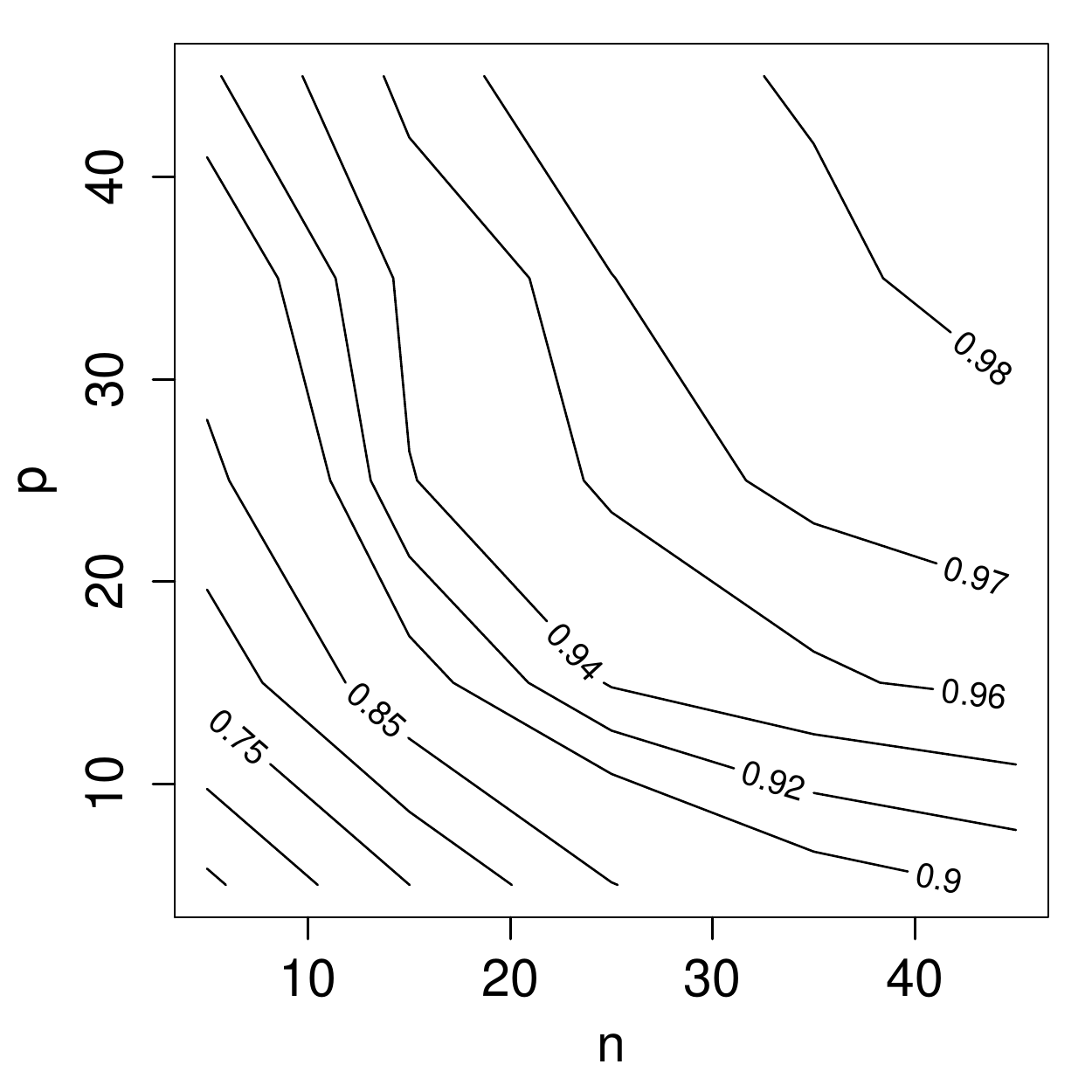}
\vspace*{-0.5em}
\caption{%
Autocorrelation of the $\tau^2_k$ chain versus
the sample size~$n$ (left)
for various values of~$p$.
Dimensional
autocorrelation function (DACF) plots in both surface (center) and contour (right) forms
for the $\tau^2_k$ chain relative to $n$ and~$p$ for the unknown-variance normal hierarchical model.
See Supplemental Section~\ref{sec:supp-numerical-details}
for details
of the generation of the various numerical
quantities, vectors, and matrices
that were used in the execution of these chains.
}
\label{fig:autocorr-nhuv}
\end{figure}


\subsection{%
Unknown Variances \& Bounded Convergence Rates
}
\label{subsec:nhdd}

Empirical Bayesian methods provide
one straightforward
way to obtain bounded convergence rates for the normal hierarchical model with unknown variances.
If the values of $\sigma^2$ and $\tau^2$ are set by an empirical Bayesian approach (e.g., by taking the values that maximize the marginal likelihood), then these values $\hat\sigma^2_{\eb}$ and $\hat\tau^2_{\eb}$ may simply be inserted into the Gibbs sampler for the known-variance model.  It then follows immediately from Theorem~\ref{thm:tv-nh} that the convergence rate is $\hat\sigma^2_{\eb}/(\hat\sigma^2_{\eb}+\hat\tau^2_{\eb})$.

Even when variances are unknown,
we now show that
it is still possible to achieve bounded convergence rates for the normal hierarchical model.
Some insight into a possible solution may be gained by observing that a known-variance approach is simply the limit of an unknown-variance approach as the priors on the variances tend to degeneracy at particular points (the ``known'' values).
More precisely, suppose that we retain the independent
inverse-gamma
priors for $\sigma^2$ and~$\tau^2$ as specified in~(\ref{model-nhuv}), but suppose we take $a_\sigma$, $s_\sigma$, $a_\tau$, and $s_\tau$ to grow proportionally to $np$.
Figure~\ref{fig:nhdd} shows
plots analogous to
Figure~\ref{fig:autocorr-nhuv} in which we have taken $a_\sigma=s_\sigma=a_\tau=s_\tau=np$,
i.e., dimensionally-dependent.
It is clear from
Figure~\ref{fig:nhdd}
that the convergence rate remains bounded away from~$1$ for all $n$ and~$p$.
Thus, the dimensionally-dependent prior for the unknown variances yields dramatically improved convergence complexity.  We will revisit this idea in Subsection~\ref{subsec:dd} for the regression setting as well.

\begin{figure}[htbp]
\centering
\includegraphics[scale=0.42]{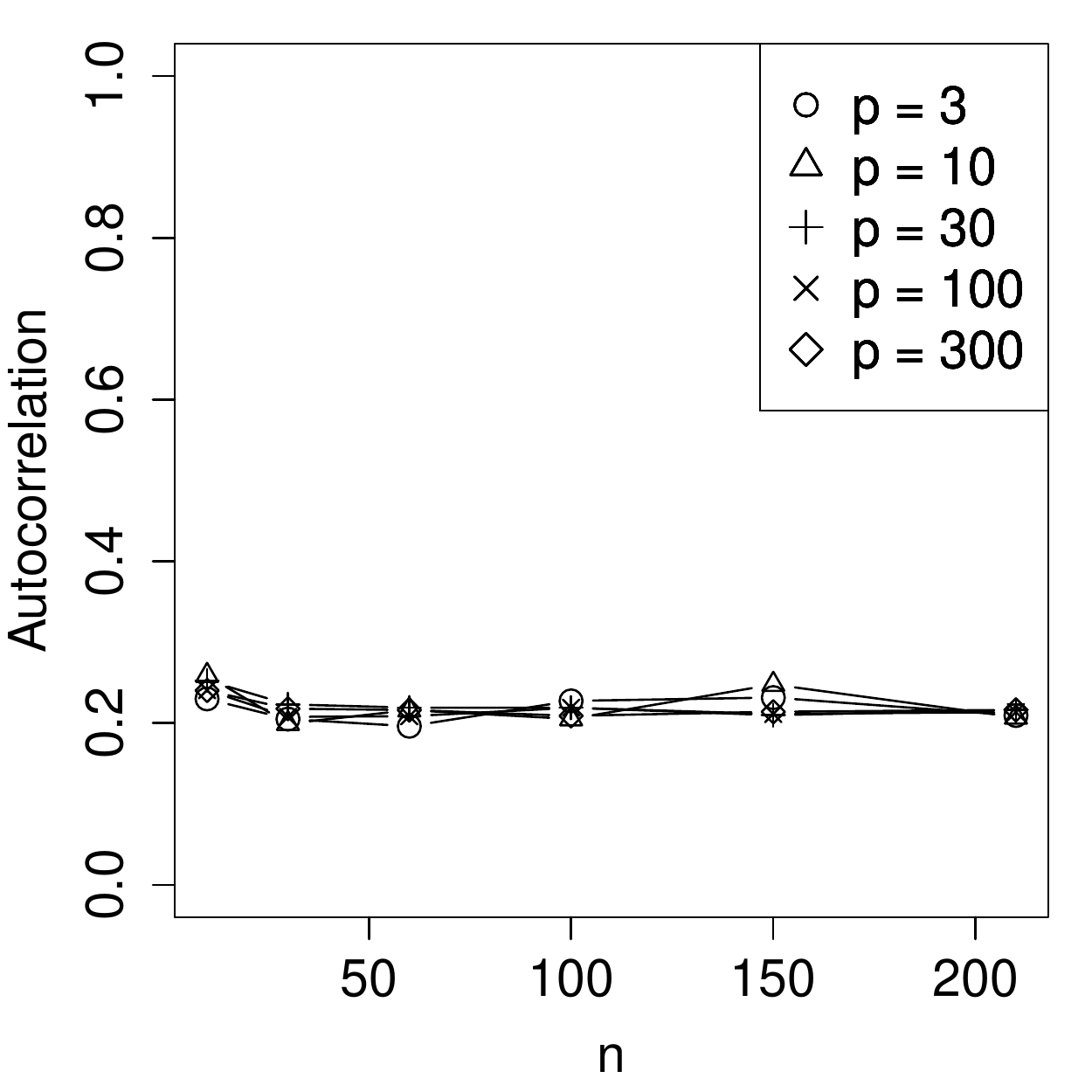}
\includegraphics[scale=0.42]{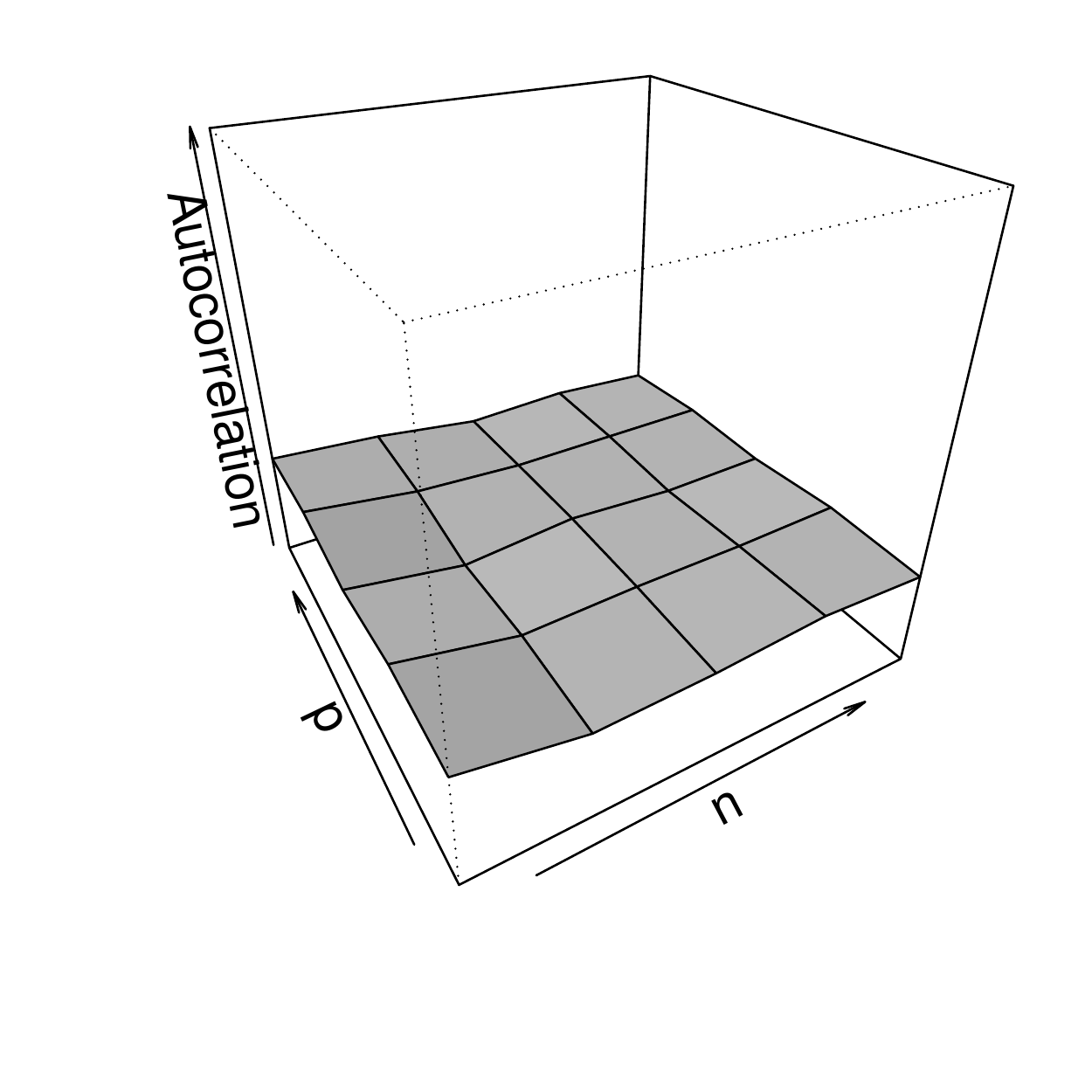}
\vspace*{-0.5em}
\caption{%
Autocorrelation of the $\tau^2_k$ chain versus
the sample size~$n$ (left)
for various values of~$p$.
Dimensional
autocorrelation function (DACF) surface plot for the $\tau^2_k$ chain relative to $n$ and~$p$ (right) for the unknown-variance normal hierarchical
model.
Both plots take $a_\sigma=s_\sigma=a_\tau=s_\tau=np$.
All other settings are the same as in Figure~\ref{fig:autocorr-nhuv}.
}
\label{fig:nhdd}
\end{figure}

It is also possible to use the empirical Bayesian approach to set the values of the points to which the aforementioned dimensionally-dependent priors converge.
Such a hybrid approach would enjoy bounded convergence rates while
both allowing the variances to remain stochastic 
and permitting sensible choices of the corresponding hyperparameters.


\section{%
Bounded Geometric Convergence Rates for High-Dimensional Regression
}
\label{sec:ir}

Recall that as discussed in Subsection~\ref{subsec:cvgc-cplx}, many applications of the method of \citet{rosenthal1995} yield an upper bound for the geometric convergence rate that tends to~$1$ as the dimension~$p$ tends to infinity.
It was demonstrated in Sections~\ref{sec:rr}~and~\ref{sec:lasso} that in the important regression setting, the actual convergence rate (as opposed to merely a bound) tends to~$1$ if the dimension~$p$ grows faster than the sample size~$n$.
Thus, MCMC-based inference for regression when $n=o(p)$ (i.e., in modern high-dimensional settings) remains a critical hurdle.
On the other hand, Sections~\ref{sec:mean}~and~\ref{sec:nh} demonstrated that in important classes of models like location models and hierarchical models, Gibbs sampling--type MCMC enjoys bounded convergence rates.
Then it may be asked (i)~whether bounded convergence rates can
nevertheless be attained for regression models,
and (ii)~whether such bounded convergence rates can be rigorously established by the method of \citet{rosenthal1995}.
In this section, we
present two approaches to address these
issues in the regression setting.
First, we propose a concrete framework in
which \citeauthor{rosenthal1995}'s method can still be used to obtain bounds on the convergence rate that do not tend to~$1$ as $p\to\infty$.
We apply this technique to a regression model with independent priors on $\bm\beta$ and~$\sigma^2$ to obtain the aforementioned bounded convergence rate.
Second, we propose an alternative, dimensionally-dependent prior specification that immediately yields provable bounded convergence rates while still retaining the classical conditional prior specification. Thus, we show that these approaches yield
the theoretical safeguard of geometric ergodicity so that MCMCs are still effective as a means to sample from
modern high-dimensional posteriors, that is, even if $n=o(p)$.

Consider a Gibbs sampler for drawing from a posterior distribution $\pi(\theta,\bm\phi\mid\bm Z)$, where $\theta$ is low-dimensional (say, $\theta\in\reals$)
but
$\bm\phi$ is high-dimensional (say, $\bm\phi\in\reals^p$),
and where $\bm Z$ denotes the data.
A two-step Gibbs sampler proceeds by drawing alternately from $\pi(\theta\mid\bm\phi,\bm Z)$ and $\pi(\bm\phi\mid\theta,\bm Z)$.
Then the Markov transition density
for drawing the next point based on the previous point $(\theta_0,\bm\phi_0)$
has the form
$f(\theta,\bm\phi\mid\theta_0,\bm\phi_0)=f_1(\theta\mid\bm\phi_0)\,f_2(\bm\phi\mid\theta)$,
where for simplicity we suppress the dependence
on~$\bm Z$
in the notation.
Now suppose we wish to prove a minorization condition in order to apply the result of \citet{rosenthal1995}.
Then it suffices to find a density $g(\theta,\bm\phi)$ and $\epsilon>0$ such that
$f(\theta,\bm\phi\mid\theta_0,\bm\phi_0)>\epsilon\,g(\theta,\bm\phi)$ for all $(\theta_0,\bm\phi)$ in some small set.
Observe that $g(\theta,\bm\phi)$ may be constructed by first finding a density $g_1(\theta)$ and $\epsilon>0$ such that
\basn
f_1(\theta\mid\bm\phi_0)>\epsilon\,g_1(\theta)
\label{rosenthal-1d}
\easn
for
all $\bm\phi_0$ in some small set, and then defining $g(\theta,\bm\phi)=g_1(\theta)\,f_2(\bm\phi\mid\theta)$.
Thus, if the high-dimensional parameter is drawn in the last step of the Gibbs sampling cycle, then a minorization condition can be established by working only with the low-dimensional distribution of the other parameter.
More precisely, the quantity $\epsilon>0$
that appears
in~(\ref{rosenthal-1d}) is used to bound a low-dimensional distribution by another low-dimensional distribution.
Hence, the convergence issue
discussed in
Subsection~\ref{subsec:cvgc-cplx},
in which the quantity $\epsilon$ takes the form $\epsilon=(\epsilon_\star)^p$, is thereby avoided.
Note that the same principle applies for Gibbs samplers of more than two steps as long as the high-dimensional parameter is confined to the last step of the Gibbs sampling cycle.
We illustrate
the
general approach above
in the next subsection.

\subsection{%
Independent-Prior Regression Model
}
\label{subsec:model-ir}

As an example of
the above
technique, consider a modification of the standard Bayesian regression framework in~(\ref{model-rr})
in which the
joint
prior on
$\bm\beta$ and $\sigma^2$ is specified as independent, i.e., $\pi(\bm\beta,\sigma^2)=\pi(\bm\beta)\,\pi(\sigma^2)$, that is, it is not specified
conditionally:
\begin{align}
\bm Y\mid\bm\beta,\sigma^2&\sim N_n(\bm X\bm\beta,\sigma^2\bm I_n),\notag\\
\bm\beta&\sim N_{p}(\bm0_{p},\lambda^{-1}\bm I_{p}),\label{model-ir}\\
\sigma^2&\sim\inversegamma(a/2,\,s/2),\notag
\end{align}
where $\bm X$ is a known $n\times p$ matrix (again with $n\ge5$),
and where the hyperparameters have known values $\lambda>0$, $a>2$, and $s>0$.
Note that the improper prior $\pi(\sigma^2)\propto1/\sigma^2$ is not used
in the above formulation
since it leads to an improper posterior when $p>n$.
Then a Gibbs sampler to draw from the posterior under~(\ref{model-ir}) may be constructed by taking an initial value~$\bm\beta_0\in\reals^p$ and then drawing (for every $k\ge1$)
\basn
\sigma^2_k\mid\bm\beta_{k-1},\bm Y&\sim\inversegamma\left(\frac{n+a}2,\;\frac{\|\bm Y-\bm X\bm\beta_{k-1}\|_2^2+s}{2}\right),\notag\\
\bm\beta_k\mid\sigma^2_k,\bm Y&\sim N_p\left(\tilde{\bm\beta}_{\sigma^2_k},\,\sigma^2_k\bm A_{\sigma^2_k}^{-1}\right),\label{gibbs-ir}
\easn
where $\bm A_{\sigma^2}=\bm X^T\bm X+\lambda\sigma^2\bm I_p$ and $\tilde{\bm\beta}_{\sigma^2}=\bm A_{\sigma^2}^{-1}\bm X^T\bm Y$.

\subsection{%
Convergence Properties
\& Bounded Convergence Rates
}
\label{subsec:cvgc-ir}

We now derive a quantitative upper bound for the convergence rate of the
the independent-prior regression Gibbs sampler
in~(\ref{gibbs-ir}).
To do so, we use the aforementioned approach that allows us to focus on the distribution of the low-dimensional parameter~$\sigma^2$ when establishing the minorization condition 
of
\citet{rosenthal1995}.
For every $k\ge0$, let $F_k(\bm\beta_0)$ denote the distribution of $(\sigma^2_k,\bm\beta_k)$ for the
the independent-prior regression Gibbs sampler in~(\ref{gibbs-ir})
started with initial value~$\bm\beta_0$, and let $F$ denote the stationary distribution of this chain, i.e., the true marginal posterior of~$(\sigma^2,\bm\beta)$.
Then we have the result shown below.
(
We
will preserve the notation of \citet{rosenthal1995} as closely as possible with a subscript~$R$ added, e.g., $\lambda_R$ is the quantity called simply $\lambda$ by \citeauthor{rosenthal1995} and is unrelated to the quantity we have called $\lambda$ elsewhere in the paper.)



\begin{thm}\label{thm:rosenthal}
For any $0<\alpha<1$ and any $d_R>2b_R/(1-\lambda_R)$,
\bas
&d_{\tv}\left[F_k(\bm\beta_0),\,F\right]\\
&\qquad\le
\left(1-\epsilon_R\right)^{\alpha k}\\
&\qquad\qquad+\left[\left(1+2b_R+2\lambda_R d_R\right)^\alpha\left(\frac{1+2b_R+\lambda_R d_R}{1+d_R}\right)^{1-\alpha}\right]^k
\left(1+\frac{b_R}{1-\lambda_R}+\left\|\bm Y-\bm X\bm\beta_0\right\|_2^2\right)
\eas
for every $k\ge1$, where
\bas
\lambda_R=\frac n{n+a-2},\qquad b_R=\bm Y^T\bm Y+\frac{ns}{n+a-2},\qquad
\epsilon_R=\left(\frac{s}{d_R+s}\right)^{(n+a)/2}.
\eas
In particular, the constants $\lambda_R$, $b_R$, $d_R$, and $\epsilon_R$ are functionally independent of~$p$.
\end{thm}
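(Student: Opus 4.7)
The plan is to apply the drift-and-minorization machinery of \citet{rosenthal1995} to the marginal $\bm\beta_k$ chain induced by~(\ref{gibbs-ir}), with drift function $V(\bm\beta)=\|\bm Y-\bm X\bm\beta\|_2^2$. The strategic advantage --- as emphasized by the paragraph preceding the theorem --- is that $\bm\beta$ is updated in the second half of the Gibbs cycle, so the $\bm\beta\to\bm\beta'$ transition density factors as $\int f_1(\sigma^2\mid\bm\beta)\,f_2(\bm\beta'\mid\sigma^2)\,d\sigma^2$, and a minorization of the one-dimensional inverse-gamma kernel $f_1$ lifts automatically (by convolving with $f_2$) to a minorization of the full joint kernel. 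This is precisely what sidesteps the $(\epsilon_\star)^p$ collapse flagged in Subsection~\ref{subsec:cvgc-cplx}.

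For the drift step, I would condition first on $\sigma^2_{k+1}$ and use that $\bm Y-\bm X\bm\beta_{k+1}\mid\sigma^2_{k+1}$ is Gaussian with mean $(\bm I_n-\bm X\bm A_{\sigma^2_{k+1}}^{-1}\bm X^T)\bm Y$ and covariance $\sigma^2_{k+1}\bm X\bm A_{\sigma^2_{k+1}}^{-1}\bm X^T$, bounding both moments via the SVD of $\bm X$: the eigenvalues of $\bm X\bm A_{\sigma^2}^{-1}\bm X^T$ equal $\sigma_i^2/(\sigma_i^2+\lambda\sigma^2)\in[0,1)$ for every $\sigma^2>0$, so its trace is at most $n$ and $\bm Y^T(\bm I_n-\bm X\bm A_{\sigma^2}^{-1}\bm X^T)^2\bm Y\le\bm Y^T\bm Y$. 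Averaging the resulting upper bound over $\sigma^2_{k+1}\sim\inversegamma((n+a)/2,(V(\bm\beta_k)+s)/2)$ (whose mean equals $(V(\bm\beta_k)+s)/(n+a-2)$) then yields the drift with $\lambda_R=n/(n+a-2)$ and $b_R=\bm Y^T\bm Y+ns/(n+a-2)$, exactly as stated.

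For the minorization step, I would take the small set $C_R=\{\bm\beta:V(\bm\beta)\le d_R\}$ and minorize $f_1(\sigma^2\mid\bm\beta)$ directly. On $C_R$ the rate parameter $r=(V(\bm\beta)+s)/2$ lies in $[s/2,\,(d_R+s)/2]$. Since the normalizing constant $r^{(n+a)/2}/\Gamma((n+a)/2)$ is minimized (as a function of $r$) at $r=s/2$ while $\exp(-r/\sigma^2)$ is minimized at $r=(d_R+s)/2$, combining these worst-case bounds and re-normalizing against the $\inversegamma((n+a)/2,(d_R+s)/2)$ density $g_1$ gives $f_1(\sigma^2\mid\bm\beta)\ge\epsilon_R\,g_1(\sigma^2)$ for every $\bm\beta\in C_R$, with $\epsilon_R=(s/(d_R+s))^{(n+a)/2}$; integrating against $f_2(\bm\beta'\mid\sigma^2)$ promotes this to a minorization of the joint kernel against a valid density on $\bm\beta$-space.

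With the drift and minorization assembled, the stated bound follows by a direct invocation of Theorem~12 of \citet{rosenthal1995}, specialized to the initial state $\bm\beta_0$ with $V(\bm\beta_0)=\|\bm Y-\bm X\bm\beta_0\|_2^2$. I do not anticipate a serious obstacle in any single step; the only place demanding care is keeping the eigenvalue and trace estimates for $\bm X\bm A_{\sigma^2}^{-1}\bm X^T$ uniform in $\sigma^2$, given that $\bm A_{\sigma^2}$ itself depends on $\sigma^2$ (the representation $\sigma_i^2/(\sigma_i^2+\lambda\sigma^2)$ handles this cleanly). The structurally important feature --- and the whole point of the theorem --- is that none of $\lambda_R$, $b_R$, or $\epsilon_R$ contains a factor depending on~$p$, so the contraction rate inside the brackets remains bounded away from~$1$ even as $p\to\infty$.
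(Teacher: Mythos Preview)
Your proposal is correct and follows essentially the same route as the paper's proof: the same drift function $V(\bm\beta)=\|\bm Y-\bm X\bm\beta\|_2^2$, the same SVD diagonalization yielding the eigenvalues $\omega_i^2/(\omega_i^2+\lambda\sigma^2)\in[0,1)$ and hence the bounds $\tr\le n$ and $\|(\bm I_n-\bm\Psi)\bm U^T\bm Y\|_2^2\le\bm Y^T\bm Y$, and the same inverse-gamma minorization against $\inversegamma((n+a)/2,(d_R+s)/2)$ giving $\epsilon_R=(s/(d_R+s))^{(n+a)/2}$. The only cosmetic difference is that the paper phrases everything for the joint $(\sigma^2,\bm\beta)$ chain with a drift function that happens to depend only on~$\bm\beta$, whereas you speak of the marginal $\bm\beta$ chain; since the transition kernel factors and the drift function is $\bm\beta$-only, the two framings are equivalent and yield identical constants.
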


Note that the quantities governing the convergence rate in Theorem~\ref{thm:rosenthal} do not depend on the design matrix~$\bm X$ or on the parameter dimension~$p$.
Thus,
for any given fixed sample size~$n$,
the convergence
complexity
of the Markov chain
is
not affected by taking $p\to\infty$.
This result
is stated formally in the corollary below.
Let $n$ be fixed, and suppose we have a sequence of $n\times p$ covariate matrices $\bm X_p$ and a sequence of $p\times1$ vectors $\bm\beta_{0,p}$.
Let $F_{p,k}(\bm\beta_{0,p})$ denote the distribution of~$(\sigma^2_k,\bm\beta_k)$ for the
the independent-prior regression Gibbs sampler in~(\ref{gibbs-ir})
with initial value~$\bm\beta_{0,p}$, and let $F_p$ denote the stationary distribution of this chain.
Then we have the following result.

\begin{cor}\label{cor:rates-ir}
For the independent-prior regression Gibbs sampler in~(\ref{gibbs-ir}),
there exist $m(\bm X_p\bm\beta_{0,p})>0$ and $0<r<1$ such that
\bas
d_{\tv}\left[F_{p,k}\left(\bm\beta_0\right),\,F_p\right]\le m\left(\bm X_p\bm\beta_{0,p}\right)\, r^k
\eas
for all $k$ and $p$.
In particular, the geometric rate constant~$r$ is functionally independent of~$p$.
\end{cor}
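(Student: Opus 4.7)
The plan is to derive the corollary directly from Theorem~\ref{thm:rosenthal} by choosing the free parameters $\alpha\in(0,1)$ and $d_R>2b_R/(1-\lambda_R)$ so that both terms in that bound decay geometrically at a rate which does not depend on~$p$. The crucial structural observation, already recorded in the theorem, is that $\lambda_R$, $b_R$, and $\epsilon_R$ are all functionally independent of~$p$ (they depend only on $n$, $a$, $s$, and $\bm Y$), so any admissible choice of $d_R$ and $\alpha$ will also be $p$-free. Thus the entire argument reduces to selecting $d_R$ and $\alpha$ so that the second base is strictly less than~$1$.

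First I would analyze the second base
\[
B(\alpha,d_R)=\left(1+2b_R+2\lambda_R d_R\right)^\alpha\left(\frac{1+2b_R+\lambda_R d_R}{1+d_R}\right)^{1-\alpha}.
\]
Since $\lambda_R=n/(n+a-2)<1$ (because $a>2$), the inner fraction satisfies
\[
\frac{1+2b_R+\lambda_R d_R}{1+d_R}\;\longrightarrow\;\lambda_R<1 \qquad\text{as }d_R\to\infty.
\]
Hence one may fix any $d_R$ large enough that this fraction is bounded above by some constant $\rho\in(\lambda_R,1)$; this choice also satisfies the required lower bound $d_R>2b_R/(1-\lambda_R)$. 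With such $d_R$ fixed, the first factor $(1+2b_R+2\lambda_R d_R)^\alpha$ is a fixed number raised to the power~$\alpha$, and tends to~$1$ as $\alpha\downarrow0$, while the second factor tends to $\rho<1$. Therefore $B(\alpha,d_R)\to\rho<1$ as $\alpha\downarrow0$, and one may pick $\alpha$ small enough that $B(\alpha,d_R)<1$.

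With $\alpha$ and $d_R$ so chosen, the first term in the bound of Theorem~\ref{thm:rosenthal} decays geometrically at rate $(1-\epsilon_R)^\alpha<1$, while the second term decays geometrically at rate $B(\alpha,d_R)<1$, with a leading constant of the form $1+b_R/(1-\lambda_R)+\|\bm Y-\bm X_p\bm\beta_{0,p}\|_2^2$. Taking
\[
r=\max\left\{(1-\epsilon_R)^\alpha,\;B(\alpha,d_R)\right\}<1
\]
and
\[
m(\bm X_p\bm\beta_{0,p})=1+\left(1+\frac{b_R}{1-\lambda_R}+\|\bm Y-\bm X_p\bm\beta_{0,p}\|_2^2\right)
\]
then yields the stated bound $d_{\tv}[F_{p,k}(\bm\beta_{0,p}),F_p]\le m(\bm X_p\bm\beta_{0,p})\,r^k$, and by construction $r$ depends only on $n,a,s,\bm Y,\lambda$, not on~$p$.

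The main obstacle in this plan is a soft one rather than a technical one: the argument crucially depends on the fact that $\lambda_R<1$ uniformly in~$p$, which in turn relies on the prior on~$\sigma^2$ being proper (i.e.\ $a>2$). Had the improper prior $\pi(\sigma^2)\propto1/\sigma^2$ been used instead, the drift constant $\lambda_R$ would collapse and no $p$-free $d_R$ could be chosen—this is precisely why the independent-prior formulation in~(\ref{model-ir}) requires a proper inverse-gamma prior. Beyond verifying this, the rest of the proof is a routine balancing of exponents, with no further delicate estimates needed.
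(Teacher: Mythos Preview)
Your proposal is correct and follows essentially the same approach as the paper: invoke Theorem~\ref{thm:rosenthal}, observe that the constants $\lambda_R,b_R,\epsilon_R$ are $p$-free, show the bracketed base can be made strictly less than~$1$ by choosing $\alpha$ small, and take $r$ as the maximum of the two bases. One minor simplification in the paper's version: rather than sending $d_R\to\infty$ to force $(1+2b_R+\lambda_R d_R)/(1+d_R)<1$, the paper notes that this inequality is an \emph{automatic} algebraic consequence of the admissibility constraint $d_R>2b_R/(1-\lambda_R)$ itself (since that constraint rearranges to $d_R>2b_R+\lambda_R d_R$), so no separate choice of $d_R$ is needed.
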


By Corollary~\ref{cor:rates-ir} above,
there exists 
a single geometric rate constant~$r$ that holds for all~$p$.
Moreover, note that if the sequence of $n\times1$ vectors $\bm X_p\bm\beta_{0,p}$ is bounded uniformly in~$p$ (as would be the case for the starting point $\bm\beta_{0,p}=\bm0_p$, for example), then the multiplicative factor $m(\bm X_p\bm\beta_{0,p})$ in Corollary~\ref{cor:rates-ir} is also bounded uniformly in~$p$.

\begin{rem}
Recall from Section~\ref{sec:rr} that the convergence rate of the Gibbs sampler for standard regression tends to~$1$ as $p/n\to\infty$.
The Gibbs sampler for independent-prior regression is therefore fundamentally better in this ``large~$p$, small~$n$'' regime.
In particular, the Gibbs sampler above
has two important consequences:
(i)~it yields a proof of concept in which we can prove a bounded convergence rate using the method of \citet{rosenthal1995},
and (ii)~it gives an example that establishes prior specification as a possible solution to convergence problems in high dimensions
(at least when aiming to prove bounded geometric convergence rates).
\end{rem}

\subsection{%
Dimensionally-Dependent Prior Specification
}
\label{subsec:dd}

The above independent-prior analysis leads to the question of whether bounded convergence rates can be obtained while retaining the conditional-prior specification.
In this subsection we show that this is indeed the case.
In particular, suppose we take the model and priors as follows:
\begin{align}
\bm Y\mid\bm\beta,\sigma^2&\sim N_n(\bm X\bm\beta,\sigma^2\bm I_n),\notag\\
\bm\beta\mid\sigma^2&\sim N_{p}(\bm0_{p},\lambda^{-1}\sigma^2\bm I_{p}),\label{model-dd}\\
\sigma^2&\sim\inversegamma(\lceil p\epsilon\rceil/2,\,s/2),\notag
\end{align}
where $\lceil\,\cdot\,\rceil$ denotes the ceiling function, $\bm X$ is a known $n\times p$ matrix (again with $n\ge5$),
and the hyperparameters have known values $\lambda>0$, $\epsilon>2$, and $s>0$.
Then a Gibbs sampler to draw from the joint posterior under~(\ref{model-dd}) may be constructed by taking an initial value~$\sigma^2_0>0$ and then drawing (for every $k\ge1$)
\basn
\bm\beta_k\mid\sigma^2_{k-1},\bm Y
&\sim N_p\left(\tilde{\bm\beta},\,\sigma^2_{k-1}\bm A^{-1}\right),\notag\\
\sigma^2_k\mid\bm\beta_k,\bm Y&\sim\inversegamma\left\{\frac{
n+p+\lceil p\epsilon\rceil
}2,\;\frac12\left[\left(\bm\beta_k-\tilde{\bm\beta}\right)^T\bm A\left(\bm\beta_k-\tilde{\bm\beta}\right)+C+s\right]\right\},\label{gibbs-dd}
\easn
where $\bm A=\bm X^T\bm X+\lambda\bm I_p$ (which is positive-definite), $\tilde{\bm\beta}=\bm A^{-1}\bm X^T\bm Y$, and $C=\bm Y^T(\bm I_n-\bm X\bm A^{-1}\bm X^T)\bm Y$.
The convergence properties of the Gibbs sampler
above
can be obtained using the results previously established in Section~\ref{sec:rr} for the standard regression Gibbs sampler in~(\ref{gibbs-rr}) since their basic form is the same.
For every $k\ge0$, let $F_k(\sigma^2_0)$ denote the joint distribution of $(\bm\beta_k,\sigma^2_k)$ for the Gibbs sampler in~(\ref{gibbs-dd}) for regression under the dimensionally-dependent prior started with initial value~$\sigma^2_0$.  Let $F$ denote the stationary distribution of this chain, i.e., the true joint posterior of $(\bm\beta,\sigma^2)$. Then we have the following result.


\begin{thm}
\label{thm:tv-dd}
Consider the Gibbs sampler in~(\ref{gibbs-dd}) for
the regression model
under the dimensionally-dependent prior.
Then there exist $0<M_1\le M_2$ such that
\bas
M_1\left(\frac{p}{n+p+\lceil p\epsilon\rceil-2}\right)^k\le d_{\tv}\left[F_k\left(\sigma^2_0\right),\,F\right]\le M_2\left(\frac{p}{n+p+\lceil p\epsilon\rceil-2}\right)^k
\eas
for every $k\ge0$.
Moreover,
the geometric rate constant $r=p/(n+p+\lceil p\epsilon\rceil-2)$
is bounded above by $1/(1+\epsilon)$
for all $n$ and~$p$.
\end{thm}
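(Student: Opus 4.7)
The plan is to exploit the structural equivalence between the Gibbs sampler in~(\ref{gibbs-dd}) and the standard regression Gibbs sampler in~(\ref{gibbs-rr}), and then invoke (a small variant of) Theorem~\ref{thm:tv-rr} with appropriately modified parameters, followed by an elementary algebraic argument for the uniform bound.

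First, I would observe that the $\bm\beta_k$ update in~(\ref{gibbs-dd}) is identical to that in~(\ref{gibbs-rr}), and that the only change is in the $\sigma^2_k$ update: the shape parameter of the conditional inverse gamma is $(n+p+\lceil p\epsilon\rceil)/2$ instead of $(n+p)/2$, and the scale contains an extra additive $s/2$. The extra $s$ is a fixed constant that merely shifts the scale parameter in the same way that the data-dependent quantity~$C$ does, and therefore does not affect the geometric structure of the marginal $\sigma^2_k$ chain. Consequently, the marginal $\sigma^2_k$ chain has the same algebraic form as in~(\ref{gibbs-rr}) but with $n$ effectively replaced by $n+\lceil p\epsilon\rceil$ in the degrees-of-freedom parameter; this substitution is the natural signature of augmenting the standard construction with a proper $\inversegamma(\lceil p\epsilon\rceil/2,\,s/2)$ prior on~$\sigma^2$.

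Next, I would adapt the derivation underlying Theorem~\ref{thm:tv-rr} to this setting. By Subsection~\ref{subsec:marginal}, the joint chain and the marginal $\sigma^2_k$ chain share the same geometric rate, so it suffices to work with the latter. Since the proof of Theorem~\ref{thm:tv-rr} depends on $n$ only through the shape of the conditional inverse gamma, performing the substitution $n \leadsto n+\lceil p\epsilon\rceil$ in that argument yields the sharp two-sided bound
\[
M_1\left(\frac{p}{n+p+\lceil p\epsilon\rceil-2}\right)^k\le d_{\tv}[F_k(\sigma^2_0),F]\le M_2\left(\frac{p}{n+p+\lceil p\epsilon\rceil-2}\right)^k
\]
with constants $0<M_1\le M_2$ of the same form as those in Theorem~\ref{thm:tv-rr}.

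Finally, for the uniform upper bound, I would argue purely algebraically. Using $\lceil p\epsilon\rceil\ge p\epsilon$ together with $n\ge 5>2$,
\[
\frac{p}{n+p+\lceil p\epsilon\rceil-2}\le\frac{p}{(n-2)+p(1+\epsilon)}\le\frac{p}{p(1+\epsilon)}=\frac{1}{1+\epsilon},
\]
which holds for all $n\ge 5$ and $p\ge 1$. The main obstacle, if any, is the bookkeeping needed to confirm that the proof of Theorem~\ref{thm:tv-rr} carries over with the substitution intact; in particular one must verify that the additive constant $s$ in the scale of the $\sigma^2_k$ update can be absorbed into the role played by $C$ without disturbing either the lower or upper bound, so that sharpness is preserved. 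Once this is checked, the remaining work is essentially arithmetic.
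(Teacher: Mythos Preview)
Your proposal is correct and follows essentially the same approach as the paper: the paper also reduces to the marginal $\sigma^2_k$ chain, observes it has the form $\sigma^2_k=(\sigma^2_{k-1}U_k+C+s)/V_k$ with $U_k\sim\chi^2_p$ and $V_k\sim\chi^2_{n+p+\lceil p\epsilon\rceil}$, and then notes that this is identical to the marginal chain~(\ref{marginal-rr}) with the degrees of freedom of~$V_k$ shifted from $n+p$ to $n+p+\lceil p\epsilon\rceil$, so the proof of Theorem~\ref{thm:tv-rr} carries over verbatim. Your explicit algebraic verification of the uniform bound $r\le 1/(1+\epsilon)$ and your remark about absorbing~$s$ into~$C$ are exactly the bookkeeping the paper leaves implicit.
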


Thus,
the dimensionally-dependent prior specification in~(\ref{model-dd}) provides an alternative approach to obtaining bounded convergence rates that preserves the conditional prior specification in which $\var(\bm\beta\mid\sigma^2)\propto\sigma^2$.

We now show that
the dimensionally-dependent prior specification in~(\ref{model-dd}) can also be used
where it is very relevant, that is, in high-dimensional Bayesian model selection
(see Section~\ref{sec:lasso}).
Figure~\ref{fig:autocorr-dd} shows
analogous
results of Gibbs sampling runs
as in Figure~\ref{fig:autocorr}
for the Bayesian lasso (left), Bayesian elastic net (center), and spike-and-slab prior
(right). Here the prior on $\sigma^2$ has been
taken as $\inversegamma(p/2,\,1/2)$, with all other settings the same as in Figure~\ref{fig:autocorr}.
Remarkably,
the dimensionally-dependent prior specification does yield bounded
autocorrelations,
thus yielding a workable solution for the use of model selection priors in high dimensions.

\begin{figure}[htbp]
\centering
\includegraphics[scale=0.42]{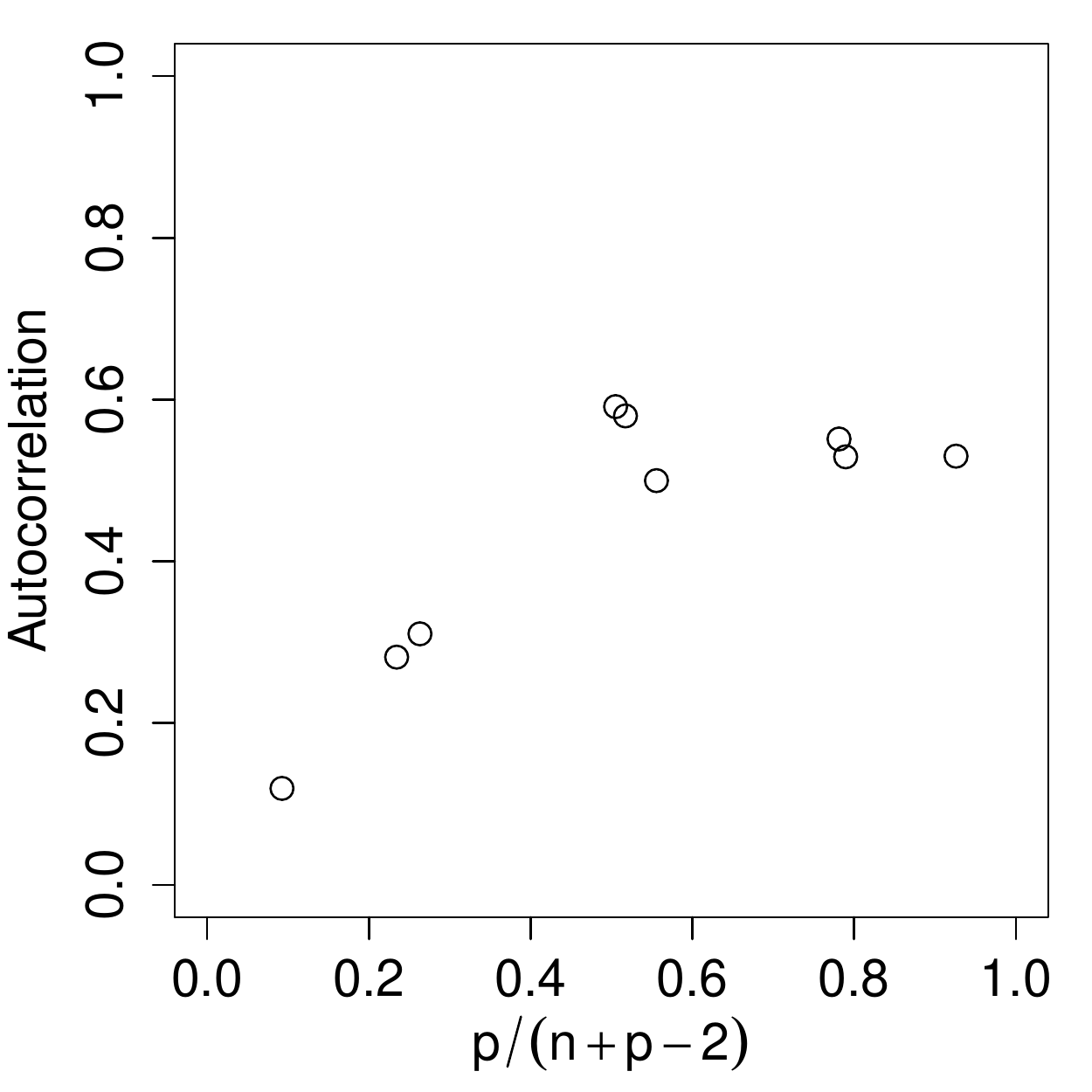}
\includegraphics[scale=0.42]{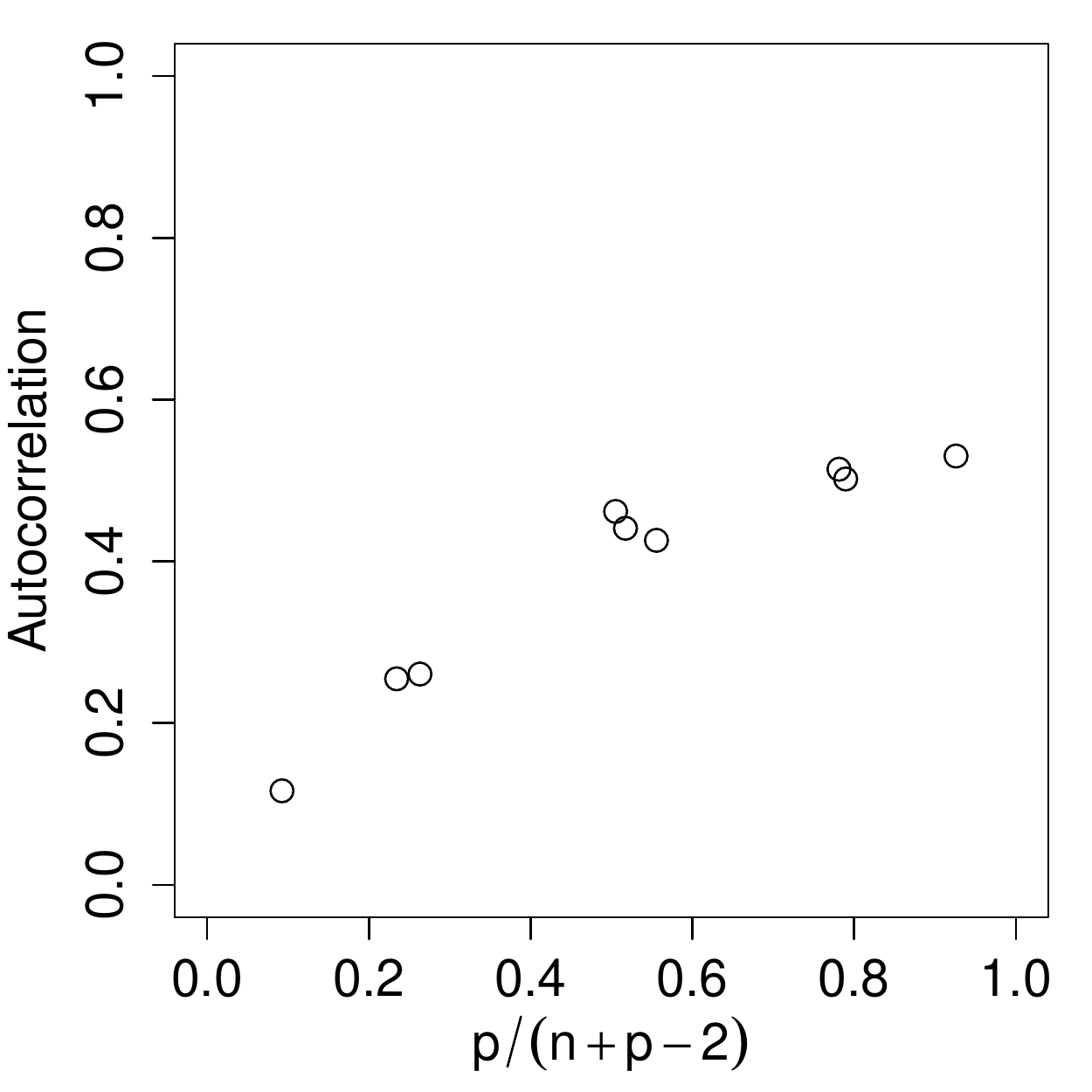}
\includegraphics[scale=0.42]{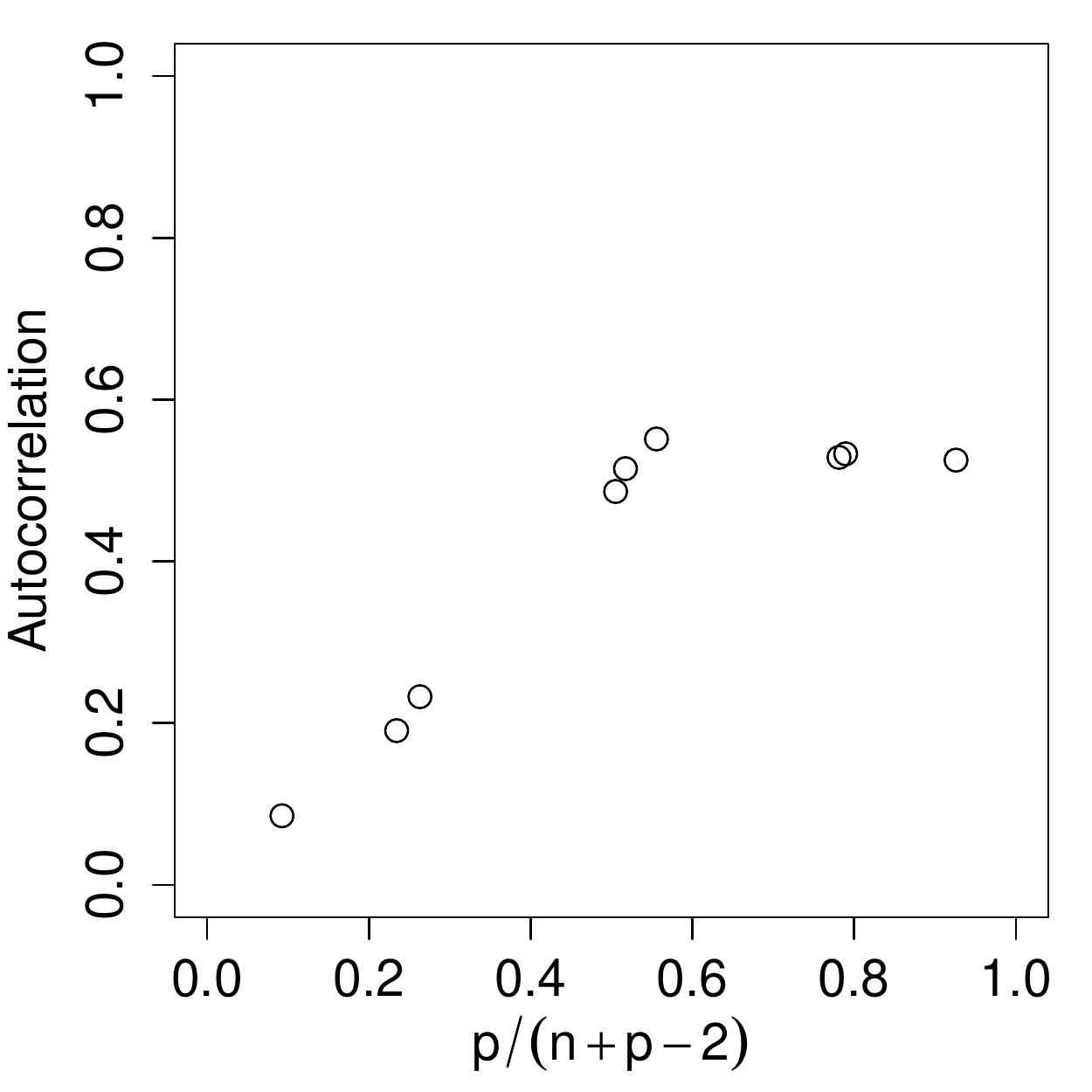}
\vspace*{-0.5em}
\caption{%
Autocorrelation of the $\sigma^2_k$ chain versus $p/(n+2p-2)$ for the
Gibbs sampler for the Bayesian lasso (left), Bayesian elastic net (center), and the spike-and-slab prior (right)
under the dimensionally-dependent prior specification where $\sigma^2\sim\inversegamma(p/2,\,1/2)$.  All other settings are the same as in Figure~\ref{fig:autocorr}.
}
\label{fig:autocorr-dd}
\end{figure}

Note that the above analysis shows that in principle one could choose the degrees of freedom in the prior specification of~$\sigma^2$ in order to obtain a desired geometric convergence rate.
Hence, the prior can be specified in such a way that that the resulting Markov chain achieves convergence to within a given tolerance~$\epsilon$ in a desired number of iterations.

\section{Discussion and Conclusions}
\label{sec:conclusion}

The preceding sections presented results on the convergence properties of various Gibbs samplers in high-dimensional regimes for important classes of statistical models. Although in some cases we consider simplified model and prior combinations, convergence rates can be extended to more sophisticated models, as demonstrated in Section \ref{sec:lasso}. We now summarize the results in the paper and discuss their implications for high-dimensional  MCMC and Bayesian inference in both theory and practice.

\subsection{Summary of Results on Convergence Rates}
\label{subsec:summary}

Sections~\ref{sec:rr},~\ref{sec:lasso},~\ref{sec:mean},~\ref{sec:nh},~and~\ref{sec:ir} considered the convergence complexity of the Gibbs samplers for several key models. The results are summarized in
the table
in Supplemental Section~\ref{sec:supp-table}.
In particular, there are three important conclusions that
can be drawn from our analysis in this paper.
First, many MCMC schemes for popular models enjoy bounded geometric convergence rates. This property gives safeguards regarding the effectiveness of using
standard
MCMCs in modern high-dimensional settings
and is a welcome message.
Important examples include
multivariate mean models, hierarchical models with known variances, regression models when $p=O(n)$,
and graphical models with bounded vertex degree.
Thus, by and large, and contrary to what is generally perceived, convergence of high-dimensional MCMCs is achieved in many models, and even when not, there are possible solutions.
Indeed, even in problematic cases, we have been able to resolve the convergence issue.
Second, the Gibbs samplers for Bayesian analysis of some commonly used models have a convergence rate that can be arbitrarily close to~$1$ in high-dimensional regimes. An important case of this phenomenon is the class of regression-type models
when $n=o(p)$ and when the usual (conditional) prior specification is used.
Third, the convergence complexity of the Gibbs sampler corresponding to a particular model can differ substantially from that of a similar one, i.e., slight changes to the model or prior can lead to very different convergence behavior. A case in point is the normal hierarchical model, in which the known-variance version enjoys convergence rates bounded away from~$1$ while the unknown-variance version does not.

Though the mechanics of the convergence behavior of these chains are difficult to predict beforehand, there are nevertheless some patterns that can be observed.
Models often feature one or more nuisance parameters that tie together a large number of other parameters.
Typical Bayesian practice would often be to take such parameters as unknown with some uninformative prior.
However, in high dimensions, such an approach can lead to extreme posterior dependence between parameters due to the structure of the likelihood
or
conditional priors of other parameters.
This dependence can dramatically worsen the convergence rate of associated Gibbs samplers.

\subsection{Remedies for Potential Convergence Problems}
\label{subsec:remedies}

When convergence problems arise due to stochasticity of the nuisance parameters in the manner discussed in the previous susbection, the most straightforward solution is simply to take these nuisance parameters as known.
Using empirical Bayes to specify the nuisance parameters is a viable option.
However, if such parameters must be taken as unknown, then an intermediate approach is to take strongly informative priors for these parameters.
Of course, such specifications still require the practitioner to supply prior knowledge of these parameters' (approximate) values.
Empirical Bayes can once more be very useful in such instances.
Specifically,
hybrid methods that combine empirical Bayes with dimensionally-dependent hyperparameters
can enjoy the superior convergence complexity of the known-parameter approach while retaining the obvious inferential benefits of allowing these parameters to be
stochastic.
Clearly there is often a trade-off between convergence complexity and other goals of Bayesian inference.


More generally, a variety of practical methods
have been proposed for improving the convergence rate of Markov chains used in Bayesian inference in the classical regime where $n$ and~$p$ are fixed and $p$ is small. Such methods may involve reorganization of the structure of the actual sampling steps by grouping or collapsing \citep[see, e.g.,][]{liu1994w}, reparametrization of the model \citep[see, e.g.,][]{gelfand1995,roberts1997,papaspiliopoulos2007,yu2011}, or expansion of the parameter space by methods such as PX-DA \citep{liu1999}. It is well established that these methods can indeed reduce the value of the constant associated with the geometric convergence in some settings for any particular fixed values of $n$ and~$p$. However, what is less clear is whether such techniques can qualitatively alter the behavior of a chain in terms of convergence complexity in various $n$ and~$p$ regimes. More precisely, it is essential to determine whether there are settings and regimes where $r_{n,p}\to1$ for some basic chain, but where the use of one of these convergence acceleration
methods
can instead yield a chain for which $r_{n,p}$ is bounded away from~$1$. Such questions are topics to be investigated in forthcoming work. Another potentially useful tool for diagnosing and understanding convergence complexity from a practical point of view is the dimensional autocorrelation function, or DACF, plots introduced in Figure~\ref{fig:autocorr-nhuv}. These plots can potentially provide insight into the convergence complexity of a Markov chain as a function of dimension and sample size. Additionally, when slow convergence is encountered by MCMC practitioners in any given application, a DACF plot could potentially aid in determining whether the problems are due to convergence complexity issues or some other cause.

From a theoretical standpoint, we were able to establish convergence rates that are bounded away from~$1$ in important classes of models. Even in the problematic regression setting, the ability of MCMC as an efficient tool to sample from the posterior was demonstrated,
including in settings where
the dimension is larger than the sample size.
Novel approaches are nevertheless
required when the dimension grows faster than the sample size. Section~\ref{sec:ir} used the Gibbs sampler for an independent-prior regression approach as an example of the specific way in which the result of \citet{rosenthal1995} can still be used to obtain bounded convergence rates in  sample-starved high-dimensional settings.
However,
it seems that there may only be certain cases in which such an approach can provide a bound that is sharp enough to permit analysis in various $n$ and~$p$ regimes.  In this paper, we have tried to overcome this problem by using a ``first principles'' approach by considering various classes of important models and analyzing the convergence behavior of the corresponding Gibbs samplers. It would be useful to generalize this strategy. We therefore hope that one consequence of our work will be to motivate the proposal and development of new ideas analogous to those of \citeauthor{rosenthal1995} that are suitable for high-dimensional settings.

\newpage
\pagestyle{empty}

\bibliographystyle{ims}
\bibliography{references}

\section*{Acknowledgments}

We thank Peter Craigmile for kindly agreeing to read the paper once it was completed.
This work was partially funded by the US National Science Foundation under grants
DMS-CMG-1025465, AGS-1003823, DMS-1106642, DMS-CAREER-1352656,
and the US Air Force Office
of Scientific Research grant award FA9550-13-1-0043.

\newpage

\appendix

\pagestyle{plain}
\setcounter{page}{1}

\part*{%
Supplemental Sections
}

\section{%
Preliminaries
}
\label{sec:supp-preliminaries}

If
$P$ and $Q$ are both distributions on~$\reals^D$, then
the Wasserstein distance between $P$ and~$Q$ is
\bas
d_W(P,Q)=\inf\,E\left(\|\bm X-\bm Y\|_1\right)=\inf\,E\left(\sum_{j=1}^D\left|X_j-Y_j\right|\right),
\eas
where the infimum is taken over all joint distributions of
$D$-dimensional
random vectors $\bm X$ and~$\bm Y$ with respective marginal distributions $P$ and~$Q$ \citep{wasserstein1969}.
The Wasserstein distance may be equivalently defined as
\bas
d_W(P,Q)=\sup\,\left|\int h\,dP-\int h\,dQ\right|
\eas
where the supremum is taken over all functions~$h$ such that $h(\bm x,\bm y)\le\|\bm x-\bm y\|_1$ for all $\bm x,\bm y\in\reals^D$
\citep{szulga1983}.
Note that this distance is sometimes called the first Wasserstein distance since it can also be generalized
by replacing the
$\ell_1$
norm (in either definition) with some other norm.
\citet{gibbs2002} provide further discussion of the relationships between
Wasserstein distance, total variation distance, and other distances between distributions.

\begin{proof}[%
Proof of Lemma~\ref{lem:ex-logistic}%
]
We consider the two asymptotic regimes separately.

\emph{Case~I: Fixed~$p$, increasing~$n$.}
Since $\det\bm A\ge\lambda^p$, it is clear that
$\delta\le2^{-n}e^{-n/4}$.
Thus, as $n\to\infty$,\, 
$\delta\to0$,
so
$\tilde r=1-\delta\to1$.
In fact,
$\tilde r\to1$
exponentially fast as $n\to\infty$.
Now suppose that the bound in~(\ref{bad-bound-lasso})
is used to calculate a number of iterations~$K_{n,\epsilon}$ that will yield convergence to within a given tolerance~$\epsilon>0$.
If $n$ is large, then $\delta$ is small, and hence
\bas
K_{n,\epsilon}=\frac{\log(\epsilon/M)}{\log(1-\delta)}\approx\frac1\delta\log(M/\epsilon)\ge(2e^{1/4})^n\log(M/\epsilon)=\exp\left[n\left(\frac14+\log2\right)\right]\log(M/\epsilon).
\eas
Thus, the number of iterations required for convergence based on the bound in~(\ref{bad-bound-lasso})
grows exponentially in the sample size.

\emph{Case~II: Fixed~$n$, increasing~$p$.}
Suppose $p\ge n$,
and let $\bm X=\bm U\bm\Omega\bm V^T$,
where
the $n\times n$ matrix~$\bm U$ and the $p\times p$ matrix~$\bm V$
are orthogonal with columns
$\bm u_1,\ldots,\bm u_n$ and
$\bm v_1,\ldots,\bm v_{
p
}$ (respectively),
and where $\bm\Omega$ is $n\times p$ rectangular-diagonal with $\bm\Omega=\diag_{n\times p}(\omega_1,\ldots,\omega_n)$.
(Note that these matrices depend on~$p$, although we do not indicate this dependence explicitly in the notation.)
Then
\bas
\bm A^{-1/2}
=\left(\frac12\bm V\bm\Omega^T\bm\Omega\bm V^T+\lambda\bm V\bm V^T\right)^{-1/2}
&=\bm V\left(\frac12\bm\Omega^T\bm\Omega+\lambda\bm I_p\right)^{-1/2}\bm V^T.
\eas
Then
an alternative upper bound for $\delta$ is
\bas
\delta&\le\exp\left(-\frac1{4\lambda}\left\|\bm X\bm A^{-1/2}\bm X^T\tilde{\bm Y}\right\|_2^2\right)\\
&=\exp\left[-\frac1{4\lambda}\left\|\bm U\bm\Omega\bm V^T\bm V\left(\frac12\bm\Omega^T\bm\Omega+\lambda\bm I_p\right)^{-1/2}\bm V^T\bm V\bm\Omega^T\bm U^T\tilde{\bm Y}\right\|_2^2\right]\\
&
=\exp\left[-\frac1{4\lambda}\tilde{\bm Y}^T\bm U\bm\Omega
\diag_{p\times p}\left(\frac{2\omega_1^2}{\omega_1^2+2\lambda},\ldots,\frac{2\omega_n^2}{\omega_n^2+2\lambda},0,\ldots,0\right)
\bm\Omega^T\bm U^T\tilde{\bm Y}\right]\\
&=\exp\left[-\frac1{2\lambda}\sum_{i=1}^n\frac{\omega_i^4}{\omega_i^2+2\lambda}\left(\bm u_i^T\tilde{\bm Y}\right)^2\right]
\le\exp\left[-\frac1{2\lambda}\left(\max_{1\le i\le n}\omega_i^2\right)\left\|\tilde{\bm Y}\right\|_2^2\right]{
.
}
\eas
Now observe that $\omega_1^2,\ldots,\omega_n^2$ are the eigenvalues of $\bm X\bm X^T$. 
The largest of these eigenvalues is bounded below by the largest diagonal element of $\bm X\bm X^T$,
i.e.,
$\max_{1\le i\le n}\omega_i^2\ge\max_{1\le i\le n}\sum_{j=1}^p X_{ij}^2$, which is of order~$p$.
Thus, $\max_{1\le i\le n}\omega_i^2\to\infty$ as $p\to\infty$.
Since $\|\tilde{\bm Y}\|_2^2$ does not depend on~$p$, it follows
that $\delta\to0$ as $p\to\infty$, so again $1-\delta\to1$.
Furthermore, $1-\delta\to1$ exponentially fast
as $p\to\infty$.
\end{proof}

\begin{proof}[%
Proof of Lemma~\ref{lem:ex-lasso}%
]
First, note that the prior $\pi(\sigma^2)\propto1/\sigma^2$ above corresponds to $\alpha=\xi=0$ in the notation of \citet{khare2013}.  Now define $\gamma$ and $b$ as in equations (3.9) and~(3.10) of \citet{khare2013}:
\bas
\gamma&=\max\left\{\frac p{n+p-2},\frac12\right\},\qquad
b=\bm Y^T\bm Y+\frac{p(n+2p)}{2\lambda^2}+\frac p{\lambda^2}.
\eas
Next, let $d>2b/(1-\gamma)$ as required by Proposition~4 of \citet{khare2013},
and then define~$\originalepsilon$ as in equation~(3.16) of \citet{khare2013}:
\bas
\originalepsilon=e^{-1/2}\left\{\frac{\bm Y^T\left[\bm I_n-\bm X\left(\bm X^T\bm X+d^{-1}\bm I_p\right)^{-1}\bm X^T\right]\bm Y}{d\left(1+p^2\lambda^2d\right)}\right\}^{(n+p)/2}.
\eas
Observe that $d>2b/(1-\gamma)>2b>2\bm Y^T\bm Y$, and thus
$\originalepsilon\le(\bm Y^T\bm Y/d)^{(n+p)/2}\le2^{-(n+p)/2}\to0$ as either $n$ or~$p$ tends to~$\infty$.
Now observe that Proposition~4 of \citet{khare2013} establishes a bound~$\tilde r$ for the geometric rate constant that is at least as large as $1-\originalepsilon$.
Then this bound $\tilde r$ tends to~$1$ exponentially fast as either $n$ or~$p$ tends to $\infty$.
\end{proof}

\section{%
Regression Models
}
\label{sec:supp-rr}

\begin{proof}[%
Proof of Theorem~\ref{thm:tv-rr}%
]
Begin by writing the Gibbs sampler in~(\ref{gibbs-rr}) as (for every $k\ge1$)
\bas
\bm\beta_k&=\tilde{\bm\beta}+\sqrt{\sigma^2_{k-1}}\;\bm A^{-1/2}\bm Z_k,
&&\text{where }\bm Z_k\sim N_p(\bm0_p,\bm I_p),\\
\sigma^2_k&=\frac1{V_k}\left[\left(\bm\beta_{k}-\tilde{\bm\beta}\right)^T\bm A\left(\bm\beta_{k}-\tilde{\bm\beta}\right)+C\right],
&&\text{where }V_k\sim\chi^2_{n+p},
\eas
and where all of the $\bm Z_k$ and $V_k$ are 
independent.
Substituting for $\bm\beta_k$
yields (for every $k\ge1$)
\basn
\label{marginal-rr}
\sigma^2_k=\frac1{V_k}\left(\sigma^2_{k-1}U_k+C\right),
\qquad\text{ where }U_k\sim\chi^2_p,\;V_k\sim\chi^2_{n+p},
\easn
and where the $U_k$ and $V_k$ are all independent.
Note that the marginal posterior of~$\sigma^2$, and hence the stationary distribution of the marginal chain in~(\ref{marginal-rr}) above, is
\bas
\sigma^2\mid\bm Y&\sim\inversegamma\left(\frac n2,\frac{
C
}2\right)
\eas
by
elementary Bayesian computations \citep[see, e.g.,][]{ohagan2010}.

We first establish the lower bound.
By the results of \citet{liu1994w}, it suffices to show that
$r=p/(n+p-2)$ is an
eigenvalue of~$\bm F_{\sigma^2}$, the forward operator of
the marginal chain in~(\ref{marginal-rr})
on the space of mean-zero, finite-variance functions on the positive half-line.
Let $\psi(\sigma^2)=\sigma^2-C/(n-2)$, which has mean zero and finite variance under the stationary distribution (i.e., the true marginal posterior).
Then it is clear
from the form of the marginal chain in~(\ref{marginal-rr})
that
\bas
E\left[\psi\left(\sigma^2_k\right)\given\sigma^2_{k-1}\right]=\frac p{n+p-2}\,\psi\left(\sigma^2_{k-1}\right)
\eas
for every $k\ge1$.  Thus, $\psi$ is an eigenfunction of the marginal forward operator~$\bm F_{\sigma^2}$ with eigenvalue
$p/(n+p-2)$,
so the largest eigenvalue of~$\bm F_{\sigma^2}$ is at least $p/(n+p-2)$.
Thus, if the joint and marginal chains are geometrically ergodic, then their convergence rate is at least $p/(n+p-2)$, and the result
follows.

\bc{81}
We now establish the upper bound.
Make the transformation $\bm\theta=\bm A^{1/2}(\bm\beta-\tilde{\bm\beta})$ and (for every $k\ge1$) $\bm\theta_k=\bm A^{1/2}(\bm\beta_k-\tilde{\bm\beta})$, so that the Gibbs sampler in~(\ref{gibbs-rr}) may be written as
\bas
\bm\theta_k&=\sqrt{\sigma^2_{k-1}}\,\bm Z_k,&&\text{where }\bm Z_k\sim N_p(\bm0_p,\bm I_p),\\
\sigma^2_k&=\frac{\|\bm\theta_k\|_2^2+C}{V_k},
&&\text{where }V_k\sim\chi^2_{n+p},
\eas
for every $k\ge1$,
where all of the $\bm Z_k$ and $V_k$ are  independent.
Substituting for $\sigma^2_{k-1}$ yields
\basn
\bm\theta_k&=\sqrt{\frac{\|\bm\theta_{k-1}\|_2^2+C}{V_{k-1}}}\,\bm Z_k,&&\text{where }\bm Z_k\sim N_p(\bm0_p,\bm I_p),\;V_{k-1}\sim\chi^2_{n+p},
\label{theta-rr}
\easn
for every $k\ge2$,
where all of the $\bm Z_k$ and $V_k$ are independent.  (The fact that $\bm\theta_1$ differs will ultimately be irrelevant.)
Note that the marginal posterior of~$\bm\theta$, and hence the stationary distribution of the marginal chain in~(\ref{theta-rr}) above, is
\bas
\left.\sqrt{\frac nC}\;\bm\theta\given\bm Y\right.\sim t_{p,n},
\eas
where $t_{p,n}$ denotes the $p$-variate $t$~distribution with $n$ degrees of freedom,
by elementary Bayesian computations \citep[see, e.g.,][]{ohagan2010}.
By the results of \citet{liu1994w}, it suffices to work with the marginal chain in~(\ref{theta-rr}).
Although the specification of~$\bm\theta_1$ differs from that of~$\bm\theta_k$ for $k\ge2$, this issue clearly has no effect on the convergence rate.
Thus, we henceforth work with a modified version of the marginal $\bm\theta_k$ chain in which some initial value $\bm\theta_0\in\reals^p$ is provided and the specification in~(\ref{theta-rr}) is extended to $k=1$ as well (and where we take $V_0\sim\chi^2_{n+p}$ to be independent of the other $\bm Z_k$ and $V_k$).
For every $k\ge1$, let $G_k(\bm\theta_0)$ denote the distribution of the $k$th iterate of this modified version of the marginal $\bm\theta_k$ chain with starting value~$\bm\theta_0$.
Let $G$ denote the corresponding stationary distribution (i.e., $G$ is a scaled $t_{p,n}$ distribution).
\ec{81}


\bc{81}
Now define $\eta=\|\bm\theta\|_2^2/(\|\bm\theta\|_2^2+C)$ and (for every $k\ge0$) $\eta_k=\|\bm\theta_k\|_2^2/(\|\bm\theta_k\|_2^2+C)$.  Then
\basn
\eta_k=\frac{W_k}{W_k+1-\eta_{k-1}},&&\text{ where }W_k=\frac{\|\bm Z_k\|_2^2}{V_{k-1}}\sim\betaprime\left(\frac p2,\,\frac{n+p}2\right),
\label{eta-rr}
\easn
for every $k\ge1$, where all of the $W_k$ are independent and where $\betaprime(\cdot,\cdot)$ denotes the beta prime distribution or beta distribution of the second kind.
For every $k\ge1$, let $H_k(\eta_0)$ denote the distribution of the $k$th iterate of the $\eta_k$ chain in~(\ref{eta-rr}) with starting value $\eta_0\in[0,1)$, noting that this distribution depends on~$\bm\theta_0$ only through~$\|\bm\theta_0\|_2^2$ and hence only through~$\eta_0$.
Let $H$ denote the corresponding stationary distribution, noting that $H$ is the $\betad(p/2,\,n/2)$ distribution by elementary results.
Observe that for every $k\ge1$, the distribution of $\|\bm\theta_k\|_2^{-1}\bm\theta_k$ is uniform on the unit sphere in~$\reals^p$ and is therefore independent of~$\eta_k$.
Similarly, the posterior distribution of $\|\bm\theta\|_2^{-1}\bm\theta\mid\bm Y$ is uniform on the unit sphere in~$\reals^p$ and hence is independent (a~posteriori) of~$\eta\mid\bm Y$.
Then it follows that
$d_{\tv}[G_k(\bm\theta_0),G]=d_{\tv}[H_k(\eta_0),H]$
by the properties of total variation distance.
Thus, it suffices to show that the $\eta_k$ chain in~(\ref{eta-rr}) converges geometrically in total variation distance with rate constant no larger than $p/(n+p-2)$.
\ec{81}

\bc{81}
Now suppose that $\eta_0\sim H$, i.e., suppose that the chain is stationary.  Then the geometric convergence rate of the $\eta_k$ chain in general (i.e., not under stationarity) is equal to $\gamma(\eta_0,\eta_1)$, the maximal correlation of successive iterates under stationarity \citep{liu1994w}.
The application of standard transformational techniques to~(\ref{eta-rr}) shows that
the joint distribution of $\eta_0$ and $\eta_1$ under stationarity has density 
\bas
f^{(\eta_0,\eta_1)}(\eta_0,\eta_1)=\frac{\Gamma[(n+2p)/2]}{\left[\Gamma(p/2)\right]^2\,\Gamma(n/2)}\;
\frac{\eta_0^{(p-2)/2}(1-\eta_0)_{\phantom0}^{(n+p-2)/2}\eta_1^{(p-2)/2}(1-\eta_1)_{\phantom0}^{(n+p-2)/2}}{(1-\eta_0\eta_1)^{(n+2p)/2}}
\eas
with respect to Lebesgue measure on $(0,1)\times(0,1)$.
Then the joint distribution of $\eta_0$ and~$\eta_1$ is the bivariate beta distribution of \citet{olkin2003} with parameters $p/2$, $p/2$, and $n/2$.
By equation~(1.6) of \citet{olkin2003}, the distribution of $\eta_0$ and~$\eta_1$ may be written as
\bas
\eta_0,\eta_1\mid Q\sim\iid\betad\left(\frac p2+Q,\,\frac{n+p}2\right),\qquad
Q\sim\betanegbin\left(\frac p2,\frac p2,\frac n2\right),
\eas
where $\betanegbin(\cdot,\cdot,\cdot)$ denotes the beta--negative binomial distribution, i.e., the distribution of~$Q$ has density
\bas
f^{(Q)}(q)=\left\{\frac{\Gamma[(n+p)/2]\,\Gamma[(p+2q)/2]}{\Gamma(p/2)}\right\}^2\frac1{\Gamma(n/2)\,q!\;\Gamma[(n+2p+2q)/2]}
\eas
with respect to counting measure on~$\mathbb N_0$, where $\mathbb N_0$ denotes the set of nonnegative integers.
Then $\gamma(\eta_0,\eta_1)=[\gamma(\eta_0,Q)]^2$ \citep[][Lemma~2.1]{yu2008}.
We now write $\eta_0$ as $\tilde\eta$ to avoid an eventual conflict in notation. The joint distribution of $\tilde\eta$ and~$Q$ has density
\basn
f^{(\tilde\eta,Q)}(\tilde\eta,q)=\frac{\Gamma[(n+p)/2]}{\Gamma(n/2)\,[\Gamma(p/2)]^2}\,(\tilde\eta)^{(p+2q-2)/2}(1-\tilde\eta)^{(n+p-2)/2}\,\frac{\Gamma[(p+2q)/2]}{q!}
\label{joint-eta-q}
\easn
with respect to the product of Lebesgue measure on~$(0,1)$ and counting measure on~$\mathbb N_0$.  Since $\tilde\eta\sim\betad(p/2,\,n/2)$ marginally, the conditional density of $Q\mid\tilde\eta$ with respect to counting measure on~$\mathbb N_0$ is
\bas
f^{(Q\mid\tilde\eta)}(q\mid\tilde\eta)=\frac{(1-\eta)^{p/2}}{\Gamma(p/2)}\,\eta^q\,\frac{\Gamma[(p+2q)/2]}{q!},
\eas
i.e., $Q\mid\tilde\eta\sim\negbin(p/2,\,\tilde\eta)$, where $\negbin(\cdot,\cdot)$ denotes the negative binomial distribution.
Now consider an entirely separate Gibbs sampler that draws from the joint distribution of $\tilde\eta$ and~$Q$ in~(\ref{joint-eta-q}) by fixing a starting point $Q_0\in\mathbb N_0$ and drawing (for every $k\ge1$)
\basn
\tilde\eta_k\mid Q_{k-1}\sim\betad\left(\frac p2+Q_{k-1},\,\frac{n+p}2\right),
\qquad
Q_k\mid\tilde\eta_k\sim\negbin\left(\frac p2,\,\tilde\eta_{k}\right).
\label{gibbs-eta-q}
\easn
Then $[\gamma(\tilde\eta,Q)]^2$ under the joint distribution in~(\ref{joint-eta-q}) is equal to the convergence rate of the Gibbs sampler in~(\ref{gibbs-eta-q})
\citep{liu1994w}.
Therefore the original standard regression Gibbs sampler has the same convergence rate as the Gibbs sampler in~(\ref{gibbs-eta-q}).
Integrating out $\tilde\eta_k$ from the joint chain in~(\ref{gibbs-eta-q}) yields
\basn
Q_k\mid Q_{k-1}\sim\betanegbin\left(\frac p2,\,\frac p2+Q_{k-1},\,\frac{n+p}2\right)
\label{chain-q}
\easn
for every $k\ge1$.
The convergence rate of this marginal $Q_k$ chain equals that of the joint chain and hence equals that of the original standard regression Gibbs sampler.
Observe that
\bas
E(Q_1\mid Q_0=q_0)=\frac p2\left(\frac p2+q_0\right)\left(\frac{n+p}2-1\right)^{-1}
=\left(\frac p{n+p-2}\right)q_0+\frac{p^2}{2(n+p-2)}
\eas
by the basic properties of the beta--negative binomial distribution.
Then since the marginal $Q_k$ chain in~(\ref{chain-q}) is stochastically monotone, we may apply Theorem~2.1 of \citet{diaconis2010} with their monotone function~$f$ taken as the identity function, their $\lambda\in(0,1)$ taken as $\lambda=p/(n+p-2)$, and their $c>0$ taken as $c=1$.
It follows immediately that an upper bound for the convergence rate of the chain in~(\ref{chain-q}), and hence an upper bound for the convergence rate of the standard regression Gibbs sampler, is $p/(n+p-2)$.
\ec{81}
\end{proof}

\bc{81}
We
\ec{81}
now establish a sharp bound for the $d_W$-convergence rate of
the marginal chain in~(\ref{marginal-rr}).
Note that once more
we will use the behavior of the marginal chain in~(\ref{marginal-rr}) as a surrogate for the behavior of the overall chain in~(\ref{gibbs-rr}).
For
every $k\ge0$, let $G_k(\sigma^2_0)$ denote the distribution of $\sigma^2_k$ for the marginal chain in~(\ref{marginal-rr}) started with initial value~$\sigma^2_0$, and let $G$ denote the stationary distribution of this chain, i.e., the true marginal posterior of~$\sigma^2$. Note that $G$ is simply the $\inversegamma(n/2,\,C/2)$
distribution.
Then we have the following result.

\begin{thm}\label{thm:wasserstein-rr}
For the
marginal chain in~(\ref{marginal-rr}) of the standard Bayesian regression Gibbs sampler 
specified in~(\ref{gibbs-rr}),
\bas
M_1\left(\sigma^2_0\right)\,\left(\frac p{n+p-2}\right)^k
\le d_W\left[G_k(\sigma^2_0),G\right]\le
M_2\left(\sigma^2_0\right)\,\left(\frac p{n+p-2}\right)^k
\eas
for every $k\ge0$, where
\bas
M_1\left(\sigma^2_0\right)=\left|\sigma^2_0-\frac C{n-2}\right|,\qquad
M_2\left(\sigma^2_0\right)=\left|\sigma^2_0-\frac C{n-2}\right|+\frac C{n-2}\sqrt{\frac{2}{n-4}}.
\eas
\end{thm}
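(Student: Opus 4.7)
The plan is to prove the lower and upper bounds separately, exploiting the affine conditional expectation of $\sigma^2_k$ given $\sigma^2_{k-1}$ that the marginal recursion in~(\ref{marginal-rr}) induces. A direct computation using $E(U_k)=p$ and $E(1/V_k)=1/(n+p-2)$ gives $E(\sigma^2_k\mid\sigma^2_{k-1})=\frac{p}{n+p-2}\sigma^2_{k-1}+\frac{C}{n+p-2}$, whose fixed point is $C/(n-2)=E_G(\sigma^2)$. Iterating the recursion yields $E(\sigma^2_k\mid\sigma^2_0)-C/(n-2)=\bigl(\frac{p}{n+p-2}\bigr)^k(\sigma^2_0-C/(n-2))$, which is the key contraction identity driving both bounds.

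For the lower bound, I would invoke the Kantorovich--Rubinstein dual characterization of Wasserstein distance and test it on the $1$-Lipschitz function $h(x)=x$. Then $d_W[G_k(\sigma^2_0),G]\ge |E(\sigma^2_k\mid\sigma^2_0)-E_G(\sigma^2)|=M_1(\sigma^2_0)\bigl(\frac{p}{n+p-2}\bigr)^k$, delivering the lower bound exactly.

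For the upper bound, I would use a synchronous coupling: run two chains, one starting at $\sigma^2_0$ and the other at $\tilde\sigma^2_0\sim G$, driven by the \emph{same} sequence $(U_k,V_k)$ appearing in~(\ref{marginal-rr}). Then $\sigma^2_k-\tilde\sigma^2_k=\frac{U_k}{V_k}(\sigma^2_{k-1}-\tilde\sigma^2_{k-1})$, and iterating gives $\sigma^2_k-\tilde\sigma^2_k=\bigl(\prod_{j=1}^k U_j/V_j\bigr)(\sigma^2_0-\tilde\sigma^2_0)$. Since the innovations are mutually independent and independent of $\tilde\sigma^2_0$, I can factor expectations to obtain $E\,|\sigma^2_k-\tilde\sigma^2_k|=\bigl(\frac{p}{n+p-2}\bigr)^k\,E\,|\sigma^2_0-\tilde\sigma^2_0|$. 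The triangle inequality bounds the last expectation by $|\sigma^2_0-C/(n-2)|+E\,|\tilde\sigma^2_0-C/(n-2)|$, and Cauchy--Schwarz together with the known variance of the $\inversegamma(n/2,C/2)$ distribution (equal to $2C^2/[(n-2)^2(n-4)]$, valid for $n\ge 5$) gives $E\,|\tilde\sigma^2_0-C/(n-2)|\le \frac{C}{n-2}\sqrt{2/(n-4)}$. Summing the two terms reproduces $M_2(\sigma^2_0)$ exactly, and since the coupling witnesses a valid joint distribution with the correct marginals $G_k(\sigma^2_0)$ and $G$, the standard coupling inequality $d_W\le E\,|\sigma^2_k-\tilde\sigma^2_k|$ completes the argument.

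The main obstacles are essentially bookkeeping rather than conceptual: verifying that the stationary initialization $\tilde\sigma^2_0\sim G$ genuinely produces a stationary second chain under the synchronous coupling (immediate from the marginal recursion being a valid transition kernel), and ensuring that the constants $M_1,M_2$ in the statement arise precisely from the triangle-inequality decomposition above with no slack beyond the Cauchy--Schwarz step. The inverse-gamma variance computation is routine and is the only place the technical assumption $n\ge 5$ enters materially.
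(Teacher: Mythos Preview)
Your proposal is correct and takes essentially the same approach as the paper: the lower bound via the Kantorovich--Rubinstein dual with a $1$-Lipschitz linear test function, and the upper bound via a synchronous coupling (common $U_k,V_k$) combined with the triangle inequality and Cauchy--Schwarz on the inverse-gamma variance. The only cosmetic difference is that the paper packages the upper bound as an induction (establishing $d_W[G_{k+1},G]\le r\,d_W[G_k,G]$ by coupling one step from an optimal coupling at time~$k$), whereas you couple all $k$ steps at once from a stationary start; both routes yield the identical constant $M_2(\sigma^2_0)$.
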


\begin{proof}
We first establish the lower bound.
Let $\psi(\sigma^2)=\sigma^2-C/(n-2)$.
Then it is clear
from the form of the marginal chain in~(\ref{marginal-rr})
that
\bas
E\left[\psi\left(\sigma^2_k\right)\given\sigma^2_{k-1}\right]=\frac p{n+p-2}\,\psi\left(\sigma^2_{k-1}\right)
\eas
for every $k\ge1$, which by repeated application yields
\bas
E\left[\psi\left(\sigma^2_k\right)\right]=\left(\frac p{n+p-2}\right)^k\psi(\sigma^2_0)
\eas
for every $k\ge0$.
Now note that $\psi$ has Lipschitz constant~$1$.
Then
\bas
d_W\left[G_k(\sigma^2_0),G\right]\ge\left|E\left[\psi\left(\sigma^2_k\right)\right]-E\left[\psi\left(\sigma^2\right)\given\bm Y\right]\vphantom{\frac{}{}}\right|
&=\left|\sigma^2_0-\frac C{n-2}\right|\left(\frac p{n+p-2}\right)^k
\eas
for every $k\ge0$ since $E[\psi(\sigma^2)\mid\bm Y]=0$, which establishes the lower bound.

To establish the upper bound, let $\xi$ be a random variable such that $\xi\sim G$.
Then
\bas
d_W\left[G_0\left(\sigma^2_0\right),G\right]=E\left(\left|\xi-\sigma^2_0\right|\right)
&\le\left|\sigma^2_0-\frac C{n-2}\right|+E\left(\left|\xi-\frac C{n-2}\right|\right)\\
&\le\left|\sigma^2_0-\frac C{n-2}\right|+\sqrt{\var(\xi)}
=M_2\left(\sigma^2_0\right),
\eas
noting that $E(\xi)=C/(n-2)$.
Hence
the upper bound is established for $k=0$.  Now assume
as an inductive hypothesis
that the upper bound holds for some arbitrary $k\ge0$.  Then there exists a random variable~$\xi_k$ such that $\xi_k\sim G_k(\sigma^2_0)$ and
\bas
E\left(\left|\xi_k-\xi\right|\right)=d_W\left[G_k\left(\sigma^2_0\right),G\right]
\le M_2\left(\sigma^2_0\right)\,\left(\frac p{n+p-2}\right)^k,
\eas
noting that the existence of a coupling that attains the Wasserstein distance is well known \citep[e.g.,][]{rachev1984,givens1984}.
Now let $U\sim\chi^2_p$ and $V\sim\chi^2_{n+p}$ be independent of $\xi$, $\xi_k$, and each other, and define random variables $\xi_{k+1}$ and $\xi_\star$ according to
\bas
\xi_{k+1}=\frac1{V}\left(U\xi_k+C\right),\qquad
\xi_\star=\frac1{V}\left(U\xi+C\right),
\eas
noting that $\xi_{k+1}\sim G_{k+1}(\sigma^2_0)$ and $\xi_\star\sim G$ by construction.
Then
\bas
d_W\left[G_{k+1}\left(\sigma^2_0\right),G\right]\le
E\left(\left|\xi_{k+1}-\xi_\star\right|\right)=E\left(\frac UV\left|\xi_k-\xi\right|\right)
&=\frac p{n+p-2}\,d_W\left[G_k\left(\sigma^2_0\right),G\right]\\
&\le M_2\left(\sigma^2_0\right)\,\left(\frac p{n+p-2}\right)^{k+1},
\eas
establishing the upper bound for every $k\ge0$ by induction.
\end{proof}

\begin{rem}
The method of proof of the upper bound in Theorem~\ref{thm:wasserstein-rr} relies upon the establishment of a \emph{coupling} between the distributions of the iterates of the Markov chain and the stationary distribution.  (Note that the word \emph{coupling} here refers to a joint distribution that yields some specified distributions as its marginals, which differs from its usual meaning in the context of Markov chain analysis.)
To our knowledge, this approach has not been
previously
used to construct quantitative results for Markov chain convergence.
\end{rem}

\begin{rem}
Note that the expression for the geometric rate constant~$r$ is sharp since the upper and lower bounds both lead to $r=p/(n+p-2)$.
\end{rem}

Theorem~\ref{thm:wasserstein-rr} applies for any particular fixed values of $n$ and $p$.  
This sharp result allows us to
analyze the
standard Bayesian regression Gibbs sampler in~(\ref{gibbs-rr})
as the values of $n$ and~$p\equiv p_n$ grow.
Thus, we can understand the convergence of the chain in various $n$ and~$p$ regimes.
To do so, suppose we have a sequence of $n\times p_n$ covariate matrices~$\bm X_n$ and a sequence of $n\times1$ response vectors~$\bm Y_n$.
For the sake of complete rigor, we also impose the following
very mild
assumption
for the remainder of the results in this subsection.

\begin{assum}\label{assum:rr}
$\|\bm Y_n\|_2^2=O(n)$.
\end{assum}

Now
also note that we will write $\bm A_n$, $C_n$, $M_{1,n}(\sigma^2_0)$, and $M_{2,n}(\sigma^2_0)$ to denote the dependence of these quantities on~$n$.
Finally, for every $n\ge5$ and $k\ge0$, let $G_{n,k}(\sigma^2_0)$ denote the distribution of~$\sigma^2_k$ for the marginal chain in~(\ref{marginal-rr}) started with initial value~$\sigma^2_0$, and let $G_n$ denote the stationary distribution of this chain.
The following result now
allows us to understand the convergence behavior of the Markov chain corresponding to the Bayesian analysis of the classical regression model in~(\ref{model-rr}) in various $n$ and~$p$ regimes.

\begin{cor}\label{cor:rates-rr}
For the marginal chain in~(\ref{marginal-rr}) of the standard Bayesian regression Gibbs sampler
specified in~(\ref{gibbs-rr}),
\bas
m_1(\sigma^2_0)\,r_n^k\le d_W\left[G_{n,k}\left(\sigma^2_0\right),G_n\right]\le m_2(\sigma^2_0)\,r_n^k
\eas
for all $k$ and~$n$,
where
\bas
r_n=\frac{p_n}{n+p_n-2},\qquad
m_1(\sigma^2_0)=\inf_{n\ge5}M_{1,n}(\sigma^2_0)\ge0,\qquad
m_2(\sigma^2_0)=\sup_{n\ge5}M_{2,n}(\sigma^2_0)<\infty.
\eas
Moreover,
$r_n$ is bounded away from~$1$ if and only if $p_n=O(n)$.
\end{cor}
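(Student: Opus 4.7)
The plan is to extract Corollary~\ref{cor:rates-rr} from Theorem~\ref{thm:wasserstein-rr} by taking a uniform-in-$n$ envelope of the constants $M_{1,n}(\sigma^2_0)$ and $M_{2,n}(\sigma^2_0)$, and then to unpack the condition $p_n = O(n)$ from the explicit form $r_n = p_n/(n+p_n-2)$. Since Theorem~\ref{thm:wasserstein-rr} applies for each fixed $n \ge 5$, I immediately obtain
\[
M_{1,n}(\sigma^2_0)\,r_n^k \;\le\; d_W\!\left[G_{n,k}(\sigma^2_0),G_n\right] \;\le\; M_{2,n}(\sigma^2_0)\,r_n^k
\]
for every $k \ge 0$. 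Passing to $m_1(\sigma^2_0) = \inf_{n \ge 5} M_{1,n}(\sigma^2_0)$ on the left and $m_2(\sigma^2_0) = \sup_{n \ge 5} M_{2,n}(\sigma^2_0)$ on the right preserves the inequalities; the nonnegativity $m_1(\sigma^2_0) \ge 0$ follows directly because $M_{1,n}(\sigma^2_0) = |\sigma^2_0 - C_n/(n-2)| \ge 0$.

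The real content is showing $m_2(\sigma^2_0) < \infty$, and this is where Assumption~\ref{assum:rr} enters. Recall $C_n = \bm Y_n^T(\bm I_n - \bm X_n \bm A_n^{-1}\bm X_n^T)\bm Y_n$. I would first observe that $\bm I_n - \bm X_n \bm A_n^{-1}\bm X_n^T$ is positive semidefinite with eigenvalues in $[0,1]$ (diagonalize $\bm X_n$ via SVD, noting $\bm A_n = \bm X_n^T \bm X_n + \lambda \bm I_p$, so each nonzero singular value $\omega_i^2$ of $\bm X_n \bm X_n^T$ contributes an eigenvalue $\lambda/(\omega_i^2 + \lambda) \in (0,1]$). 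Consequently $0 \le C_n \le \|\bm Y_n\|_2^2$. Under Assumption~\ref{assum:rr}, $\|\bm Y_n\|_2^2 = O(n)$, so $C_n/(n-2) = O(1)$, and hence both $|\sigma^2_0 - C_n/(n-2)|$ and $(C_n/(n-2))\sqrt{2/(n-4)}$ are bounded uniformly in $n \ge 5$. Adding these bounded quantities gives $\sup_{n \ge 5} M_{2,n}(\sigma^2_0) < \infty$.

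It remains to characterize when $r_n = p_n/(n+p_n-2)$ is bounded away from~$1$. Writing $1 - r_n = (n-2)/(n+p_n-2)$, I see that $r_n$ is bounded away from $1$ iff $1 - r_n$ is bounded away from $0$, iff the ratio $p_n/(n-2)$ is bounded above, iff $p_n = O(n)$. The converse direction is equally immediate: if $p_n/n \to \infty$, then $(n-2)/(n+p_n-2) \to 0$ and $r_n \to 1$.

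The main (mild) obstacle is the uniform control of $C_n/(n-2)$; everything else is bookkeeping. This step is where Assumption~\ref{assum:rr} is used, and it is essential because without some growth control on $\|\bm Y_n\|_2^2$ the constant $M_{2,n}(\sigma^2_0)$ could blow up, rendering the uniform-in-$n$ bound vacuous even though the geometric rate $r_n$ itself remains well behaved.
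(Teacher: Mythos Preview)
Your proof is correct and follows essentially the same approach as the paper: invoke Theorem~\ref{thm:wasserstein-rr}, control $C_n$ via $0\le C_n\le\|\bm Y_n\|_2^2=O(n)$ (the paper notes only that $\bm X_n\bm A_n^{-1}\bm X_n^T$ is positive semidefinite, whereas you give the fuller SVD argument), and conclude $m_2(\sigma^2_0)<\infty$. Your explicit unpacking of the ``$r_n$ bounded away from~$1$ iff $p_n=O(n)$'' step is more detailed than the paper's terse ``the rest follows immediately,'' but the logic is the same.
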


\begin{proof}
Note that
$0\le C_n=\bm Y_n^T(\bm I_n-\bm X_n\bm A_n^{-1}\bm X_n^T)\bm Y_n\le\|\bm Y_n\|_2^2=O(n)$
by Assumption~\ref{assum:rr}
and since the matrix $\bm X_n\bm A_n^{-1}\bm X_n^T$ is positive semidefinite.
Then clearly $m_2(\sigma^2_0)<\infty$, and
the rest follows immediately from Theorem~\ref{thm:wasserstein-rr}.
\end{proof}

The
message of Corollary~\ref{cor:rates-rr} is that although the convergence to the stationary distribution is geometric, the rate constant of the geometric convergence tends to~$1$ if the number of
parameters (i.e., the number of regression
coefficients, or equivalently,
the number of predictor variables)
grows faster than the sample size.
%
%
%
More practically, the results of Theorem~\ref{thm:wasserstein-rr} and Corollary~\ref{cor:rates-rr} may be understood by considering the number of iterations
required for approximate convergence.
Let $\epsilon>0$ be given, and let
$K_{n,\epsilon}\left(\sigma^2_0\right)$
denote the number of iterations
required for the Markov chain to be within~$\varepsilon$ of the stationary distribution in Wasserstein distance,
i.e.,
\bas
K_{n,\epsilon}\left(\sigma^2_0\right)=\inf\left\{K\ge0:d_W\left[G_{n,k}\left(\sigma^2_0\right),G_n\right]\le\epsilon\text{ for every }k\ge K\right\}.
\eas
The
following result asserts
that
if the dimension of the parameter grows faster than the sample size,
then the number of iterations required (to obtain approximate convergence to within some desired distance~$\epsilon>0$) grows without bound.

\begin{cor}\label{cor:rates-rr-alt}
For the marginal chain in~(\ref{marginal-rr}) of the standard Bayesian regression Gibbs sampler
specified in~(\ref{gibbs-rr}),
suppose that $0<\epsilon<m_1(\sigma^2_0)$.  Then
$K_{n,\epsilon}(\sigma^2_0)=O(1)$ as $n\to\infty$
if and only if $p_n=O(n)$.
\end{cor}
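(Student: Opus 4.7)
The plan is to extract $K_{n,\epsilon}(\sigma^2_0)$ from the two-sided bound in Corollary~\ref{cor:rates-rr} and then analyze the resulting expression as $n\to\infty$. The key observation is that the sandwich
\bas
m_1(\sigma^2_0)\,r_n^k \le d_W\left[G_{n,k}(\sigma^2_0),G_n\right] \le m_2(\sigma^2_0)\,r_n^k
\eas
converts the ``time to $\epsilon$-tolerance'' question into elementary inequalities involving $\log(1/r_n)$.

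First, I would apply the upper bound to conclude that $d_W\le\epsilon$ whenever $k\ge\log(m_2(\sigma^2_0)/\epsilon)/\log(1/r_n)$, which gives
\bas
K_{n,\epsilon}(\sigma^2_0)\le\left\lceil\frac{\log(m_2(\sigma^2_0)/\epsilon)}{\log(1/r_n)}\right\rceil.
\eas
Symmetrically, the lower bound forces $d_W>\epsilon$ whenever $k<\log(m_1(\sigma^2_0)/\epsilon)/\log(1/r_n)$; here the hypothesis $\epsilon<m_1(\sigma^2_0)$ is precisely what ensures $\log(m_1(\sigma^2_0)/\epsilon)>0$, yielding
\bas
K_{n,\epsilon}(\sigma^2_0)\ge\frac{\log(m_1(\sigma^2_0)/\epsilon)}{\log(1/r_n)}.
\eas
Thus $K_{n,\epsilon}(\sigma^2_0)$ is of the exact order of $1/\log(1/r_n)$, and the entire problem reduces to the behavior of $\log(1/r_n)=\log(1+(n-2)/p_n)$.

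For the ``if'' direction, assume $p_n=O(n)$, so there exists $C>0$ with $p_n\le Cn$ for all large $n$. Then $(n-2)/p_n\ge(n-2)/(Cn)$ is bounded below by a positive constant, so $\log(1/r_n)$ is likewise bounded below by a positive constant, and the upper sandwich gives $K_{n,\epsilon}(\sigma^2_0)=O(1)$. For the ``only if'' direction, suppose $p_n$ is not $O(n)$; then there is a subsequence $n_j\to\infty$ along which $p_{n_j}/n_j\to\infty$, so $(n_j-2)/p_{n_j}\to0$ and hence $\log(1/r_{n_j})\to0$ (by the standard asymptotic $\log(1+x)\sim x$ as $x\to 0$). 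The lower sandwich then forces $K_{n_j,\epsilon}(\sigma^2_0)\to\infty$, contradicting $K_{n,\epsilon}(\sigma^2_0)=O(1)$.

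I do not foresee a real obstacle: the substantive work was done in establishing Theorem~\ref{thm:wasserstein-rr} and Corollary~\ref{cor:rates-rr}. The only subtleties are (i) using the assumption $\epsilon<m_1(\sigma^2_0)$ in a nontrivial way to get a useful lower bound on $K_{n,\epsilon}$, and (ii) correctly interpreting ``$p_n$ is not $O(n)$'' as the existence of a subsequence along which $p_n/n\to\infty$ so that the lower-bound argument applies.
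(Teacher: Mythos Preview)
Your proposal is correct and follows essentially the same approach as the paper: both arguments feed the two-sided bound from Corollary~\ref{cor:rates-rr} into the definition of $K_{n,\epsilon}(\sigma^2_0)$ and then reduce the question to whether $r_n$ is bounded away from~$1$. The only cosmetic difference is that for the ``only if'' direction the paper argues directly (assuming $K_{n,\epsilon}=O(1)$, it bounds $r_n\le[\epsilon/m_1(\sigma^2_0)]^{1/K}<1$ and converts this to $p_n/n\le r_n/(1-r_n)$), whereas you argue by contrapositive via a subsequence; these are equivalent and equally clean.
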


\begin{proof}
Assume that $p_n=O(n)$. Then there exists $r<1$ such that $r_n\le r$ for all~$n$.
Now let $K=\log[\epsilon/m_2(\sigma^2_0)]/\log(r)$, and note that for every $k\ge K$ and all~$n$,
\bas
d_W\left[G_{n,k}\left(\sigma^2_0\right),G_n\right]\le m_2(\sigma^2_0)\,r_n^k\le m_2(\sigma^2_0)\,r^k\le m_2(\sigma^2_0)\,r^K=\epsilon.
\eas
Thus, $K_{n,\epsilon}(\sigma^2_0)\le K$ for all~$n$, so $K_{n,\epsilon}(\sigma^2_0)=O(1)$.

Now assume instead that $K_{n,\epsilon}(\sigma^2_0)=O(1)$.  Then there exists an integer $K\ge0$ such that
\bas
m_1(\sigma^2_0)\,r_n^K\le d_W\left[G_{n,K}\left(\sigma^2_0\right),G_n\right]\le\epsilon\quad\text{ for all }n,
\eas
which implies that $r_n\le[\epsilon/m_1(\sigma^2_0)]^{1/K}<1$ for all~$n$.
It follows that
\bas
\frac{p_n}n\le\frac{p_n}{n-2}=\frac{r_n}{1-r_n}\le\frac{[\epsilon/m_1(\sigma^2_0)]^{1/K}}{1-[\epsilon/m_1(\sigma^2_0)]^{1/K}}
\eas
for all~$n$, and thus $p_n=O(n)$.
\end{proof}

To express
the idea of Corollary~\ref{cor:rates-rr-alt}
somewhat more finely, we can consider the rate at which
$K_{n,\epsilon}(\sigma^2_0)$ grows with $p_n$ and~$n$.
Note from Corollary~\ref{cor:rates-rr} and the definition of $K_{n,\epsilon}(\sigma^2_0)$ that
\bas
\left\lceil\frac{\log m_1\left(\sigma^2_0\right)-\log\epsilon}{\log(n+p_n-2)-\log p_n}\right\rceil
\le K_{n,\epsilon}\left(\sigma^2_0\right)\le
\left\lceil\frac{\log m_2\left(\sigma^2_0\right)-\log\epsilon}{\log(n+p_n-2)-\log p_n}\right\rceil,
\eas
where $\lceil\,\cdot\,\rceil$ denotes the ceiling function.
Thus,
$K_{n,\epsilon}(\sigma^2_0)$ is proportional to
\bas
\left[\log(n+p_n-2)-\log p_n\right]^{-1}=\left[\log\left(1+\frac{n-2}{p_n}\right)\right]^{-1}\approx\left(\frac{n}{p_n}\right)^{-1}=\frac{p_n}n
\eas
for large $n$ and~$p_n$
with $n<p_n$.
Thus, the rate of growth of $K_{n,\epsilon}(\sigma^2_0)$ is asymptotically linear in the ratio $p_n/n$.
In particular, an increase in the parameter dimension~$p$ increases the number of iteratons required for approximate convergence, while an increase in the sample size~$n$ reduces it.
As a concrete example,
a hundredfold increase in the parameter dimension implies a hundredfold increase in the required number of iterations (holding the sample size constant).
Hence, an especially large number of iterations may be required in the
modern ``large~$p$, small~$n$'' setting.
The above result thus
questions the validity of
high-dimensional Bayesian inference that relies on regression-type Gibbs
samplers.

\begin{proof}[%
Proof of Lemma~\ref{lem:post-corr-rr}%
]
The result is obtained by a straightforward calculation of the posterior correlation using
the conditional and marginal
posteriors
as well as standard properties of the $\chi^2$, $F$, and
inverse-gamma
distributions.
\end{proof}

\begin{proof}[%
Proof of Lemma~\ref{lem:autocorr-rr}%
]
Note that $\sigma^2_k\sim G$ and $\sigma^2_{k+1}\sim G$ since $\sigma^2_0\sim G$.
Then $\var(\sigma^2_k)=\var(\sigma^2_{k+1})$, and
\bas
\cov\left(\sigma^2_k,\sigma^2_{k+1}\right)
=\cov\left[\sigma^2_k,\,\frac1{V_{k+1}}\left(\sigma^2_kU_{k+1}+C\right)\right]
=E\left(\frac{U_{k+1}}{V_{k+1}}\right)
\var\left(\sigma^2_k\right)
=\frac{p}{n+p-2}\var\left(\sigma^2_k\right).
\eas
Similarly,
$\var(\|\bm\theta_k\|_2^2)=\var(\|\bm\theta_{k+1}\|_2^2)$, and
\bas
\cov\left(\|\bm\theta_k\|_2^2,\|\bm\theta_{k+1}\|_2^2\right)
=\cov\left[\|\bm\theta_k\|_2^2,\,\frac{U_{k+1}}{V_k}\left(\|\bm\theta_k\|_2^2+C\right)\right]
&=E\left(\frac{U_{k+1}}{V_k}\right)\,\var\left(\|\bm\theta_k\|_2^2\right)\\
&=\frac{p}{n+p-2}\var\left(\|\bm\theta_k\|_2^2\right).
\eas
The result then follows immediately.
\end{proof}

\begin{proof}[%
Proof of Lemma~\ref{lem:directions-iid}%
]
For all $k\ge1$, we have
$\bm\theta_k/\|\bm\theta_k\|_2=
\bm Z_k
/\|\bm Z_k\|_2$,
and the $\bm Z_k$ are independent.
\end{proof}

\begin{proof}[%
Proof of Theorem~\ref{thm:tv-dag}%
]
For each $j\in\{1,\ldots,m\}$,
Theorem~\ref{thm:tv-rr}
immediately establishes the existence of constants
$0\le \tilde M_{j,1}\le\tilde M_{j,2}$
such that
\bas
{
\tilde M_{j,1}
}
\left(\frac{\delta_j}{n+\delta_j-2}\right)^k\le d_{
\tv
}\left[\Xi_{j,k}\left(D_{jj;0}\right),\,\Xi_j\right]\le
{
\tilde M_{j,2}
}
\left(\frac{\delta_j}{n+\delta_j-2}\right)^k
\eas
for every $k\ge0$.
Now observe that $d_{
\tv
}
[\Xi_k(\bm D_0),\,\Xi]\le\sum_{j=1}^m d_{
\tv
}[\Xi_{j,k}(D_{jj;0}),\,\Xi_j]$
due to the mutual independence
of the $(D_{jj},\,\bm L_{
\pa(j),j
})$.
Then
\bas
d_{\tv}[\Xi_k(\bm D_0),\,\Xi]\le\sum_{j=1}^m\tilde M_{j,2}\left(\frac{\delta_j}{n+\delta_j-2}\right)^k\le m\left(\max_{1\le j\le m}\tilde M_{j,2}\right)\left(\frac{\delta_{\max}}{n+\delta_{\max}-2}\right)^k,
\eas
establishing the upper bound with $\tilde M_2=m\max_{1\le j\le m}\tilde M_{j,2}$.
Now note that there exists $J\in\{1,\ldots,m\}$ such that $\delta_J=\delta_{\max}$. Then
\bas
d_{\tv}[\Xi_k(\bm D_0),\,\Xi]\ge\sum_{j=1}^m\tilde M_{j,1}\left(\frac{\delta_j}{n+\delta_j-2}\right)^k\ge\tilde M_{J,1}\left(\frac{\delta_{\max}}{n+\delta_{\max}-2}\right)^k,
\eas
etablishing the lower bound with $\tilde M_1=\tilde M_{J,1}$.
\end{proof}

\section{%
Bayesian Model Selection
}
\label{sec:supp-lasso}

\begin{proof}[%
Proof of Theorem~\ref{thm:autocorr-lasso}%
]
Begin by noting that in the Gibbs sampler in~(\ref{gibbs-lasso}), $\sigma^2_k$ may be expressed as
\basn
\sigma^2_k
=\frac{\sigma^2_{k-1}U_k+C_{\bm\tau_k}}{V_k},&&\text{ where }
U_k
\sim\chi^2_p,
\label{recur-sigma}
\easn
and where $U_k$ is independent of~$\sigma^2_{k-1}$.
Then
\begin{align}
\cov(\sigma^2_k,\sigma^2_{k+1})
&=\cov\left(\sigma^2_k,\,\frac{\sigma^2_{k}U_{k+1}+C_{\bm\tau_{k+1}}}{V_{k+1}}\right)\notag\\
&=\cov\left(\sigma^2_k,\,\frac{U_{k+1}}{V_{k+1}}\sigma^2_{k}\right)+\cov\left(\sigma^2_k,\,\frac{C_{\bm\tau_{k+1}}}{V_{k+1}}\right).\label{cov-split}
\end{align}
Recall that $U_{k+1}$ and~$V_{k+1}$ are independent of each other and of~$\sigma^2_k$, so
\begin{align}
\cov\left(\sigma^2_k,\,\frac{U_{k+1}}{V_{k+1}}\sigma^2_{k}\right)
&=E\left[\frac{U_{k+1}}{V_{k+1}}\left(\sigma^2_k\right)^2\right]-E\left(\sigma^2_k\right)\,E\left(\frac{U_{k+1}}{V_{k+1}}\sigma^2_k\right)\notag\\
&=E\left(\frac{U_{k+1}}{V_{k+1}}\right)\left\{E\left[\left(\sigma^2_k\right)^2\right]-\left[E\left(\sigma^2_k\right)\right]^2\right\}
=\frac p{n+p-2}\var\left(\sigma^2_k\right).\label{cov-var}
\end{align}
Since $V_{k+1}$ is also independent of~$\bm\tau_{k+1}$, we have
\begin{align}
\cov\left(\sigma^2_k,\,\frac{C_{\bm\tau_{k+1}}}{V_{k+1}}\right)
&=E\left(\frac{C_{\bm\tau_{k+1}}}{V_{k+1}}\sigma^2_k\right)-E\left(\frac{C_{\bm\tau_{k+1}}}{V_{k+1}}\right)\,E\left(\sigma^2_k\right)\notag\\
&=\frac1{n+p-2}\left[E\left(C_{\bm\tau_{k+1}}\sigma^2_k\right)-E\left(C_{\bm\tau_{k+1}}\right)\,E\left(\sigma^2_k\right)\right]\notag\\
&=\frac1{n+p-2}\cov\left(\sigma^2_k,\,C_{\bm\tau_{k+1}}\right)
\ge-\frac1{n+p-2}\sqrt{\var\left(\sigma^2_k\right)\,\var\left(C_{\bm\tau_{k+1}}\right)}
\label{cov-cs-c}
\end{align}
by the
Cauchy--Schwarz
inequality. 
Now observe that
\bas
\var\left(C_{\bm\tau_{k+1}}\right)
\le E\left(C_{\bm\tau_{k+1}}^2\right)
=E\left\{\left[\bm Y^T\left(\bm I_n-\bm X\bm A_{\bm\tau_{k+1}}^{-1}\bm X^T\right)\bm Y\right]^2\right\}\le\left(\bm Y^T\bm Y\right)^2,
\eas
noting once again that $\bm Y$ is nonrandom from the point of view of the Gibbs sampling Markov chain.  Combining this result
with the inequality in~(\ref{cov-cs-c})
yields
\bas
\cov\left(\sigma^2_k,\,\frac{C_{\bm\tau_{k+1}}}{V_{k+1}}\right)
\ge
-\frac{\bm Y^T\bm Y}{n+p-2}\sqrt{\var\left(\sigma^2_k\right)},
\eas
which in turn may be combined with
the results in (\ref{cov-split}) and~(\ref{cov-var})
to obtain
\bas
\cov(\sigma^2_k,\sigma^2_{k+1})\ge\frac p{n+p-2}\var\left(\sigma^2_k\right)
\left[1-\frac{\bm Y^T\bm Y}{p\,\sqrt{\var\left(\sigma^2_k\right)}}\right].
\eas
The desired result then follows from the fact that
\bas
\var\left(\sigma^2_k\right)=\var\left(\sigma^2_{k+1}\right)=\var\left(\sigma^2\mid\bm Y\right)
\eas
since $\sigma^2_k\sim\pi(\sigma^2\mid\bm Y)$ and $\sigma^2_k\sim\pi(\sigma^2\mid\bm Y)$.
\end{proof}

\section{%
Multivariate Location Models
}
\label{sec:supp-mean}

\begin{proof}[%
Proof of Theorem~\ref{thm:tv-mean}%
]
Begin by writing the Gibbs sampler in~(\ref{gibbs-mean}) as
\bas
\bm\mu_k&=\tilde{\bm\mu}+\sqrt{\frac{\sigma^2_{k-1}}{n+\lambda}}\,\bm Z_k,&&\text{ where }\bm Z_k\sim N_p(\bm0_p,\bm I_p),\\
\sigma^2_k&=\frac{(n+\lambda)\left\|\bm\mu_k-\tilde{\bm\mu}\right\|_2^2+C}{V_k},&&\text{ where }V_k\sim\chi^2_{np+p},
\eas
and where all of the $\bm Z_k$ and $V_k$ are independent.
Substituting for~$\bm\mu_k$ yields
\basn
\sigma^2_k=\frac1{V_k}\left(\sigma^2_{k-1}U_k+C\right),&&\text{ where }U_k\sim\chi^2_p,\;V_k\sim\chi^2_{np+p},
\label{marginal-mean}
\easn
and where the $U_k$ and $V_k$ are all independent.
This marginal chain is the same as the marginal chain in~(\ref{marginal-rr}) of the standard Bayesian regression Gibbs sampler, except with the degrees of freedom of $V_k$ changed from $n+p$ to~$np+p$.
Thus, the
proof is essentially identical to that of Theorem~\ref{thm:tv-rr}.
\end{proof}

We now establish a sharp bound for the $d_W$-convergence rate of the marginal $\sigma^2_k$ chain in~(\ref{marginal-mean}) of the Gibbs sampler for the multivariate mean model.
For every $k\ge0$, let $G_k(\sigma^2_0)$ denote the distribution of~$\sigma^2_k$ for the marginal chain
in~(\ref{marginal-mean}).
Let $G$ denote the stationary distribution of this chain, i.e., the true marginal posterior of~$\sigma^2$.
Then we have the following result.

\begin{thm}\label{thm:wasserstein-mean}
For the marginal chain in~(\ref{marginal-mean}) of the
Gibbs sampler for the multivariate mean model,
\bas
M_1\left(\sigma^2_0\right)\,\left(\frac p{np+p-2}\right)^k
\le d_W\left[G_k(\sigma^2_0),G\right]\le
M_2\left(\sigma^2_0\right)\,\left(\frac p{np+p-2}\right)^k
\eas
for every $k\ge0$, where
\bas
M_1\left(\sigma^2_0\right)=\left|\sigma^2_0-\frac C{np-2}\right|,\qquad
M_2\left(\sigma^2_0\right)=\left|\sigma^2_0-\frac C{np-2}\right|+\frac C{np-2}\sqrt{\frac{2}{np-4}}.
\eas
\end{thm}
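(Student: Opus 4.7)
The plan is to mirror the proof of Theorem~\ref{thm:wasserstein-rr} essentially verbatim, exploiting the fact that the marginal chain in~(\ref{marginal-mean}) has the same functional form as the marginal chain in~(\ref{marginal-rr}) but with $V_k \sim \chi^2_{np+p}$ replacing $V_k \sim \chi^2_{n+p}$. First, I would identify the stationary distribution $G$ as $\inversegamma(np/2,\,C/2)$ by a standard Bayesian calculation under~(\ref{model-mean}), so the posterior mean is $C/(np-2)$ and the posterior variance is $2C^2/[(np-2)^2(np-4)]$; these two quantities are precisely what will drive the forms of $M_1(\sigma^2_0)$ and $M_2(\sigma^2_0)$ in the statement.

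For the lower bound, I would take the $1$-Lipschitz test function $\psi(\sigma^2) = \sigma^2 - C/(np-2)$, which is mean-zero under~$G$. Writing
\bas
E[\psi(\sigma^2_k) \mid \sigma^2_{k-1}] = E(U_k/V_k)\,\sigma^2_{k-1} + C\,E(1/V_k) - C/(np-2)
\eas
and using $E(U_k) = p$ together with $E(1/V_k) = 1/(np+p-2)$, one verifies both that $E(U_k/V_k) = r := p/(np+p-2)$ and that $C/(np+p-2) - C/(np-2) = -rC/(np-2)$, so $\psi$ is an eigenfunction of the one-step conditional expectation operator with eigenvalue~$r$. Iterating gives $E[\psi(\sigma^2_k)] = r^k\,\psi(\sigma^2_0)$, and since $\psi$ is $1$-Lipschitz, Kantorovich--Rubinstein duality supplies
\bas
d_W[G_k(\sigma^2_0),G] \ge \left|E[\psi(\sigma^2_k)]\right| = \left|\sigma^2_0 - C/(np-2)\right|\,r^k,
\eas
which is the claimed lower bound with the advertised $M_1(\sigma^2_0)$.

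For the upper bound, I would reuse the coupling induction from Theorem~\ref{thm:wasserstein-rr}. The base case $k=0$ follows by letting $\xi \sim G$ and bounding $d_W[G_0(\sigma^2_0),G] \le E|\xi - \sigma^2_0| \le |\sigma^2_0 - C/(np-2)| + \sqrt{\var(\xi)}$, with $\var(\xi) = 2C^2/[(np-2)^2(np-4)]$ matching $M_2(\sigma^2_0)$. For the inductive step, assume a coupling $(\xi_k,\xi)$ that attains the Wasserstein distance at step~$k$, draw independent $U \sim \chi^2_p$ and $V \sim \chi^2_{np+p}$, and define
\bas
\xi_{k+1} = \frac{U\xi_k + C}{V}, \qquad \xi_\star = \frac{U\xi + C}{V},
\eas
so that marginally $\xi_{k+1} \sim G_{k+1}(\sigma^2_0)$ and $\xi_\star \sim G$. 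Then $E|\xi_{k+1} - \xi_\star| = E(U/V)\,E|\xi_k - \xi| = r\,d_W[G_k(\sigma^2_0),G]$, closing the induction. The main obstacle is purely algebraic: confirming that the new centering $C/(np-2)$, variance $2C^2/[(np-2)^2(np-4)]$, and eigenvalue $r = p/(np+p-2)$ all emerge cleanly when the degrees of freedom of $V_k$ are inflated from $n+p$ to $np+p$. Once these elementary identities are checked, the argument is a drop-in replacement of the regression case.
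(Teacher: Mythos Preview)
Your proposal is correct and matches the paper's own proof, which simply states that the argument is essentially identical to that of Theorem~\ref{thm:wasserstein-rr} with the degrees of freedom of~$V_k$ changed from $n+p$ to $np+p$. The eigenfunction, coupling induction, and algebraic identities you spell out are exactly the ones implicit in that reference.
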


\begin{proof}
The proof is essentially identical to that of Theorem~\ref{thm:wasserstein-rr}.
\end{proof}

\section{%
Normal Hierarchical Model
}
\label{sec:supp-nh}

\begin{proof}[%
Proof of Theorem~\ref{thm:tv-nh}%
]
Begin by writing the Gibbs sampler in~(\ref{gibbs-nh}) as (for every $k\ge1$)
\bas
\bm\psi_{k,i}&=\frac{\tau^2}{\sigma^2+\tau^2}\bm X_i+\frac{\sigma^2}{\sigma^2+\tau^2}\bm\mu_{k-1}+\sqrt{\frac{\sigma^2\tau^2}{\sigma^2+\tau^2}}\;\bm Y_{k,i},
&&\text{ where }\bm Y_{k,i}\sim N_p(\bm0_p,\bm I_p),\\
\bm\mu_k&=\frac1n\sum_{i=1}^n\bm\psi_{k,i}+\sqrt{\frac{\tau^2}n}\;\bm Z_k,&&\text{ where }\bm Z_k\sim N_p(\bm0_p,\bm I_p),
\eas
where $i\in\{1,\ldots,n\}$, and where all of the $\bm Y_{k,i}$ and $\bm Z_k$ are independent.
Substituting for~$\bm\psi_{k,i}$ yields (for every $k\ge1$)
\basn
\bm\mu_k&=\frac{\tau^2}{\sigma^2+\tau^2}\left(\frac1n\sum_{i=1}^n\bm X_i\right)+\frac{\sigma^2}{\sigma^2+\tau^2}\bm\mu_{k-1}+\sqrt{\frac{\sigma^2\tau^2}{\sigma^2+\tau^2}}\left(\frac1n\sum_{i=1}^n\bm Y_{k,i}\right)+\sqrt{\frac{\tau^2}n}\;\bm Z_k\notag\\
&=\frac{\tau^2}{\sigma^2+\tau^2}\left(\frac1n\sum_{i=1}^n\bm X_i\right)+\frac{\sigma^2}{\sigma^2+\tau^2}\bm\mu_{k-1}+
\sqrt{\frac{(\tau^2)^2+2\sigma^2\tau^2}{n(\sigma^2+\tau^2)}}\;\bm W_k,
\easn
where $\bm W_k\sim N_p(\bm0_p,\bm I_p)$ and where the $\bm W_k$ are all independent.
Note
that the stationary distribution of this chain 
(i.e., the true marginal posterior of~$\bm\mu$)
is the $p$-dimensional multivariate normal distribution with mean vector $n^{-1}\sum_{i=1}^n\bm X_i$ and covariance matrix $n^{-1}(\sigma^2+\tau^2)\bm I_p$.
Now define
$\tilde{\bm\mu}=\bm\mu-n^{-1}\sum_{i=1}^n\bm X_i$ and $\tilde{\bm\mu}_k=\bm\mu_k-n^{-1}\sum_{i=1}^n\bm X_i$.
It is clear that the
total variation
distance between the distribution of~$\tilde{\bm\mu}_k$ and the marginal posterior of~$\tilde{\bm\mu}$ is the same as that between the distribution of~$\bm\mu_k$ and the marginal posterior of~$\bm\mu$.
Thus, it suffices
to prove
the result in the case where $n^{-1}\sum_{i=1}^n\bm X_i=\bm0_p$, which we will henceforth assume.
Then for every $k\ge1$,
\bas
\bm\mu_k=\frac{\sigma^2}{\sigma^2+\tau^2}\bm\mu_{k-1}+
\sqrt{\frac{(\tau^2)^2+2\sigma^2\tau^2}{n(\sigma^2+\tau^2)}}\;\bm W_k,
\eas
which implies that
\bas
\bm\mu_k
&=\left(\frac{\sigma^2}{\sigma^2+\tau^2}\right)^k\bm\mu_0
+\left[\frac{(\tau^2)^2+2\sigma^2\tau^2}{n(\sigma^2+\tau^2)}\sum_{i=1}^k\left(\frac{\sigma^2}{\sigma^2+\tau^2}\right)^{i-1}\right]^{1/2}\tilde{\bm W}_k\\
&=r^k\bm\mu_0
+\left[\frac{\sigma^2+\tau^2}{n}
\left(1-r^{2k}\right)\right]^{1/2}\tilde{\bm W}_k,
\eas
where $\tilde{\bm W}_k\sim N_p(\bm0_p,\bm I_p)$ and $r=\sigma^2/(\sigma^2+\tau^2)$.  Hence,
\basn
\bm\mu_k\sim N_p\left[
r^k\bm\mu_0,\;\frac{\sigma^2+\tau^2}{n}\left(1-r^{2k}\right)\bm I_p
\right]
\label{marginal-nh}
\easn
for every $k\ge0$.
Now note that $d_{\tv}[H_k(\bm\mu_0),H]$ is at least as large as the $d_{\tv}$-distance between the $N_p[r^k\bm\mu_0,\,n^{-1}(\sigma^2+\tau^2)\bm I_p]$ and $N_p[\bm0_p,\,n^{-1}(\sigma^2+\tau^2)\bm I_p]$
distributions.
Then by elementary properties of the total variation distance between multivariate normal distributions, we have
\bas
d_{\tv}\left[H_k(\bm\mu_0),H\right]
\ge\sqrt{\frac{n}{2(\sigma^2+\tau^2)}}\|\bm\mu_0\|_2\,
{
r^k
}
\eas
for all sufficiently
large~$k$.
To establish the upper bound,
let $\tilde H_k(\bm\mu_0)$ denote the
$p$-variate normal distribution with mean~$r^k\bm\mu_0$ and covariance matrix $n^{-1}(\sigma^2+\tau^2)\bm I_p$.
Then
\bas
d_{\tv}\left[H_k(\bm\mu_0),H\right]&\le
{
d_{\tv}\left[\tilde H_k(\bm\mu_0),H\right]+d_{\tv}\left[H_k(\bm\mu_0),\tilde H_k(\bm\mu_0)\right]
}
\\
&\le\left[\frac{n}{2(\sigma^2+\tau^2)}\right]^{1/2}\left\|\bm\mu_0\right\|_2\,r^k\\
&\qquad+{
p
}\left[
\frac2{\pi(1-r^{2k})}
\log\left(\frac1{1-r^{2k}}\right)
\left(1-r^{2k}\right)^{1/r^{2k}}
\right]^{1/2}\left(1-\sqrt{1-r^{2k}}\right)\\
&\le\sqrt{\frac n{2(\sigma^2+\tau^2)}}\left\|\bm\mu_0\right\|_2\,r^k+p\,r^{2k}
\le\sqrt{\frac n{\sigma^2+\tau^2}}\left\|\bm\mu_0\right\|_2\,r^k
\eas
for all sufficiently large~$k$.
\end{proof}

\begin{rem}
The chain $\bm\mu_k$ is linear in the previous iterate.  Thus, the sharp bound above can also be obtained by evaluating the maximal correlation.  (Recall that Theorem~\ref{thm:tv-rr} for obtaining sharp rates for the standard regression model was obtained using the maximal correlation method.)
\end{rem}

We now establish a sharp bound for the $d_W$-convergence rate of the marginal $\bm\mu_k$ chain in~(\ref{gibbs-nh}) of the Gibbs sampler for the normal hierarchical model.

\begin{thm}\label{thm:wasserstein-nh}
Consider the Gibbs sampler for the normal hierarchical model in~(\ref{gibbs-nh}).  Then
\bas
M_1(\bm\mu_0)\,
{
r^k
}
\le\frac1p\,d_W\left[H_k(\bm\mu_0),H\right]\le
M_2(\bm\mu_0)\,
{
r^k
}
\eas
for every $k\ge0$, where
$r=\sigma^2/(\sigma^2+\tau^2)$ and
\bas
M_1(\bm\mu_0)=\frac1p\left\|\bm\mu_0-\frac1n\sum_{i=1}^n\bm X_i\right\|_1,
\qquad
M_2(\bm\mu_0)=\frac1p\left\|\bm\mu_0-\frac1n\sum_{i=1}^n\bm X_i\right\|_1+\sqrt{\frac{2(\sigma^2+\tau^2)}{n\pi}}.
\eas
\end{thm}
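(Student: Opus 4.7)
The plan is to exploit the fact, already established in the proof of Theorem~\ref{thm:tv-nh}, that the Gibbs sampler yields a marginal $\bm\mu_k$ chain whose distribution is available in closed form. Specifically, by translating both the chain and the data by $n^{-1}\sum_i\bm X_i$ I reduce to the case $n^{-1}\sum_i\bm X_i=\bm0_p$, under which
\bas
\bm\mu_k\sim N_p\left[r^k\bm\mu_0,\;\frac{\sigma^2+\tau^2}{n}(1-r^{2k})\bm I_p\right],
\qquad H=N_p\left[\bm0_p,\;\frac{\sigma^2+\tau^2}{n}\bm I_p\right],
\eas
where $r=\sigma^2/(\sigma^2+\tau^2)$. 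Since $d_W$ is invariant under the joint translation of the two marginals, the general case follows by replacing $\bm\mu_0$ with $\bm\mu_0-n^{-1}\sum_i\bm X_i$ in the final bound.

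For the \emph{lower bound}, I would invoke the dual representation of Wasserstein distance. Any function of the form $h(\bm x)=\sum_{j=1}^p s_j x_j$ with $s_j\in\{-1,+1\}$ satisfies $|h(\bm x)-h(\bm y)|\le\|\bm x-\bm y\|_1$, hence is admissible in the supremum. Taking $s_j=\sign((\bm\mu_0)_j)$ and computing the two expectations under $H_k(\bm\mu_0)$ and $H$ gives
\bas
d_W\left[H_k(\bm\mu_0),H\right]\ge\bigl|E_{H_k}h-E_{H}h\bigr|=r^k\|\bm\mu_0\|_1,
\eas
which after dividing by $p$ and undoing the translation produces $M_1(\bm\mu_0)\,r^k$.

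For the \emph{upper bound}, I would construct an explicit coupling using a single common $N_p(\bm0_p,\bm I_p)$ vector~$\bm Z$. Setting
\bas
\bm X=r^k\bm\mu_0+\sqrt{\frac{(\sigma^2+\tau^2)(1-r^{2k})}{n}}\,\bm Z,
\qquad
\bm Y=\sqrt{\frac{\sigma^2+\tau^2}{n}}\,\bm Z,
\eas
the marginals are correct, and by the triangle inequality
\bas
E\|\bm X-\bm Y\|_1\le r^k\|\bm\mu_0\|_1+\sqrt{\frac{\sigma^2+\tau^2}{n}}\,\bigl(1-\sqrt{1-r^{2k}}\bigr)\,E\|\bm Z\|_1.
\eas
Then I would use $E\|\bm Z\|_1=p\sqrt{2/\pi}$ together with the elementary inequality $1-\sqrt{1-x}\le x$ for $x\in[0,1]$ (applied with $x=r^{2k}$, noting $r^{2k}\le r^k$) to conclude
\bas
E\|\bm X-\bm Y\|_1\le r^k\left[\|\bm\mu_0\|_1+p\sqrt{\frac{2(\sigma^2+\tau^2)}{n\pi}}\right],
\eas
which after dividing by $p$ and reinstating the translation yields the desired $M_2(\bm\mu_0)\,r^k$.

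There is no serious obstacle here: unlike the regression setting of Theorem~\ref{thm:wasserstein-rr}, the linearity of the hierarchical Gibbs update in $\bm\mu_{k-1}$ collapses the marginal chain to a Gaussian AR(1)-type process with an explicit stationary distribution, so both bounds reduce to a one-line computation (dual form) and a one-line coupling. The only mild subtlety is keeping track of the constants so that the multiplicative factor in the upper bound is sharp up to the additive $\sqrt{2(\sigma^2+\tau^2)/(n\pi)}$ term, which is what the $E|Z_1|=\sqrt{2/\pi}$ moment identity delivers.
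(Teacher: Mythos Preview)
Your proof is correct. The lower bound is essentially the same as the paper's: the paper simply says it ``follows immediately from a comparison of the means of the distributions $H_k(\bm\mu_0)$ and~$H$,'' which is precisely what your dual-representation argument with $h(\bm x)=\sum_j s_j x_j$ formalizes.

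For the upper bound, however, your route genuinely differs from the paper's. The paper does \emph{not} use the closed-form Gaussian for $H_k(\bm\mu_0)$ directly; instead it mimics the inductive one-step coupling argument of Theorem~\ref{thm:wasserstein-rr}: it first bounds $d_W[H_0(\bm\mu_0),H]$ by $p\,M_2(\bm\mu_0)$, then assumes an optimal coupling $(\bm\xi_k,\bm\xi)$ attaining $d_W[H_k(\bm\mu_0),H]$ exists, pushes both through the same one-step update with a common noise $\bm W$, and obtains the contraction factor $r$ by induction. You instead couple $H_k(\bm\mu_0)$ and $H$ directly at time~$k$ via a single shared $\bm Z$, then control the variance mismatch using $1-\sqrt{1-r^{2k}}\le r^{2k}\le r^k$. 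Your approach is shorter and exploits the explicit Gaussian AR(1) structure that is special to this model; the paper's inductive coupling is more template-driven (it parallels the regression proof) and would still work even if only the one-step transition, rather than the $k$-step marginal, were tractable. Both yield exactly the same constants $M_1(\bm\mu_0)$ and $M_2(\bm\mu_0)$.
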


\begin{proof}
Begin by noting that $H_k(\bm\mu_0)$ was derived in~(\ref{marginal-nh}) in the proof of Theorem~\ref{thm:tv-nh}.
The lower bound then follows immediately from a comparison of the means of the distributions $H_k(\bm\mu_0)$ and~$H$.

To establish the upper bound, let $\bm\xi$ be a random variable such that $\bm\xi\sim H$.
Then
\bas
d_W\left[H_0\left(\bm\mu_0\right),H\right]=E\left(\left\|\bm\xi-\bm\mu_0\right\|_1\right)
\le\left\|\bm\mu_0\right\|_1+E\left(\left\|\bm\xi\right\|_1\right)
=\left\|\bm\mu_0\right\|_1+p\,\sqrt{\frac{2(\sigma^2+\tau^2)}{n\pi}}
=p\,M_2(\bm\mu_0),
\eas
establishing the upper bound for $k=0$.  Now assume
as an inductive hypothesis
that the upper bound holds for some arbitrary $k\ge0$.  Then there exists a random variable $\bm\xi_k$ such that $\bm\xi_k\sim H_k(\bm\mu_0)$ and
\bas
E\left(\left\|\bm\xi_k-\bm\xi\right\|_1\right)=d_W\left[H_k(\bm\mu_0),H\right]
\le p\,M_2(\bm\mu_0)\,r^k,
\eas
noting that the existence of a coupling that attains the Wasserstein distance is well known \citep[e.g.,][]{rachev1984,givens1984}.
Now let $\bm W\sim N_p(\bm0_p,\bm I_p)$ be independent of $\bm\xi$ and $\bm\xi_k$, and define random variables $\bm\xi_{k+1}$ and $\bm\xi_\star$ according to
\bas
\bm\xi_{k+1}=
r\,\bm\xi_{k}+
\sqrt{\frac{(\tau^2)^2+2\sigma^2\tau^2}{n(\sigma^2+\tau^2)}}\;\bm W,
\qquad
\bm\xi_{\star}=
r\,\bm\xi+
\sqrt{\frac{(\tau^2)^2+2\sigma^2\tau^2}{n(\sigma^2+\tau^2)}}\;\bm W,
\eas
noting that $\bm\xi_{k+1}\sim H_{k+1}(\bm\mu_0)$ and $\bm\xi_\star\sim H$ by construction.  Then
\bas
d_W\left[H_{k+1}(\bm\mu_0),H\right]\le E\left(\left\|\bm\xi_{k+1}-\bm\xi_\star\right\|_1\right)=r\,E\left(\left\|\bm\xi_{k}-\bm\xi\right\|_1\right)\le p\,M_2(\bm\mu_0)\,r^{k+1},
\eas
establishing the upper bound for every $k\ge0$ by induction.
\end{proof}

\begin{rem}
Note that Theorem~\ref{thm:wasserstein-nh} is stated with the Wasserstein distance and the
$\ell_1$~norms
multiplied by a factor
of~$1/p$.
This factor is introduced
to adjust for the fact that the
$\ell_1$~norm
(on which the Wasserstein distance is based) in $p$ dimensions is a sum of $p$ terms.
\end{rem}

%

\section{%
Bounded Geometric Convergence Rates for High-Dimensional Regression
}
\label{sec:supp-ir}

\begin{proof}[%
Proof of Theorem~\ref{thm:rosenthal}%
]
We shall use
Theorem~12 of \citet{rosenthal1995}, which requires the establishment of a drift condition and an associated minorization condition.
We use a subscript~$R$ to correspond to the notation of \citet{rosenthal1995} when necessary to avoid a conflict with the notation in the remainder of the present work.
Also note that some expressions below require the existence of a $\sigma^2_0>0$ in addition to~$\bm\beta_0\in\reals^p$, but this $\sigma^2_0$ may be taken arbitrarily since
doing so
does not affect the chain in any way.

Let $V_R(\sigma^2,\bm\beta)=\|\bm Y-\bm X\bm\beta\|_2^2$,
and observe that
\bas
\left.\bm Y-\bm X\bm\beta_1\vphantom{0^0_0}\given\sigma^2_1\right.
&\sim N_n\left(\bm Y-\bm X\tilde{\bm\beta}_{\sigma^2_1},\;\sigma^2_1\bm X\bm A_{\sigma^2_1}^{-1}\bm X^T\right).
\eas
(It
is to be understood throughout the proof that all distributions and expectations are conditional on~$\bm Y$.)
Now let $\bm X=\bm U\bm\Omega\bm V^T$,
where $\bm U$ and $\bm V$ are orthogonal with columns
$\bm u_1,\ldots,\bm u_n$ and
$\bm v_1,\ldots,\bm v_{
p
}$ (respectively),
and where $\bm\Omega$ is $n\times p$ rectangular-diagonal with $\bm\Omega=\diag_{n\times p}(\omega_1,\ldots,\omega_n)$.
(Note that these matrices depend on~$p$, although we do not indicate this dependence explicitly in the notation.)
Then
\bas
\left.\bm Y-\bm X\bm\beta_1\vphantom{0^0_0}\given\sigma^2_1\right.
&\sim N_n\left[\bm U\left(\bm I_n-\bm\Psi_{\sigma^2_1}\right)\bm U^T\bm Y,\;
\sigma^2_1\bm U\bm\Psi_{\sigma^2_1}\bm U^T\right],
\eas
where
\bas
\bm\Psi_{\sigma^2_1}=\diag\left(\frac{\omega_1^2}{\omega_1^2+\lambda\sigma^2_1},\ldots,\frac{\omega_n^2}{\omega_n^2+\lambda\sigma^2_1}\right).
\eas
Then for all $\sigma^2_1>0$,
\bas
E\left[V_R\left(\sigma^2_1,\bm\beta_1\right)\given\sigma^2_1\right]
&=\left\|\bm U\left(\bm I_n-\bm\Psi_{\sigma^2_1}\right)\bm U^T\bm Y\right\|_2^2
+\tr\left(\sigma^2_1\bm U\bm\Psi_{\sigma^2_1}\bm U^T\right)\\
&=\left\|\left(\bm I_n-\bm\Psi_{\sigma^2_1}\right)\bm U^T\bm Y\right\|_2^2
+\sigma^2_1\tr\left(\bm\Psi_{\sigma^2_1}\right)\le\bm Y^T\bm Y+n\sigma^2_1,
\eas
and hence
\bas
E\left[V_R\left(\sigma^2_1,\bm\beta_1\right)\right]
&=E\left\{E\left[V_R\left(\sigma^2_1,\bm\beta_1\right)\given\sigma^2_1\right]\right\}\\
&\le\bm Y^T\bm Y+n\,E\left(\sigma^2_1\right)
=\bm Y^T\bm Y+\frac n{n+a-2}\left(\left\|\bm Y-\bm X\bm\beta_0\right\|_2^2+s\right).
\eas
Thus, the drift condition of \citet{rosenthal1995} holds with $V_R(\sigma^2,\bm\beta)$ as given above, with constants $\lambda_R=n/(n+a-2)$ and $b_R=\bm Y^T\bm Y+ns/(n+a-2)$.

We now establish an associated minorization condition.  Let $d_R>2b_R/(1-\lambda_R)$,
and suppose $V_R(\sigma^2_0,\bm\beta_0)=\|\bm Y-\bm X\bm\beta_0\|_2^2\le d_R$.
Let $f(\sigma^2,\bm\beta\mid\sigma^2_0,\bm\beta_0)$ denote the density with respect to Lebesgue measure of the joint distribution of the
$(k+1)$st
iterate given that the $k$th iterate takes the value $(\sigma^2_0,\bm\beta_0)$.
This density may be expressed as
\bas
f(\sigma^2,\bm\beta\mid\sigma^2_0,\bm\beta_0)=f_{\bm\beta\mid\sigma^2}(\bm\beta\mid\sigma^2)\,f_{\sigma^2\mid\bm\beta_0}(\sigma^2\mid\bm\beta_0).
\eas
Now let $Q_R$ be the $\inversegamma[(n+a)/2,\,(d_R+s)/2]$ distribution,
and let $q_R$ be its density with respect to Lebesgue measure.
Then
\bas
f_{\sigma^2\mid\bm\beta_0}(\sigma^2\mid\bm\beta_0)
&=\frac{\left[\left(\|\bm Y-\bm X\bm\beta_0\|_2^2+s\right)/2\right]^{(n+a)/2}}{\Gamma\left[\left(n+a\right)/2\right]}
\left(\sigma^2\right)^{-(n+a+2)/2}
\exp\left(-\frac{\|\bm Y-\bm X\bm\beta_0\|_2^2+s}{2\sigma^2}\right)\\
&\ge\left(\frac{\|\bm Y-\bm X\bm\beta_0\|_2^2+s}{d_R+s}\right)^{(n+a)/2}q_R(\sigma^2)
\ge\left(\frac s{d_R+s}\right)^{(n+a)/2}q_R(\sigma^2).
\eas
Thus,
the minorization condition of \citet{rosenthal1995} holds with $Q_R$ and $d_R$ as given above, with $\epsilon_R=[s/(d_R+s)]^{(n+a)/2}$.
The result then follows immediately from Theorem~12 of \citet{rosenthal1995}, noting also that the quantity
$\alpha^{-(1-r)}$ of \citet{rosenthal1995}
may simply be omitted while still preserving an upper bound.
\end{proof}

\begin{proof}[%
Proof of Corollary~\ref{cor:rates-ir}%
]
For any $0<\alpha<1$, let $r_1(\alpha)=(1-\epsilon_R)^\alpha$, and let $r_2(\alpha)$ equal the quantity in square brackets on the right-hand side of Theorem~\ref{thm:rosenthal}.
Note that $(1+2b_R+\lambda_Rd_R)/(1+d_R)<1$ since $d_R>2b_R/(1-\lambda_R)$,
so it follows that there exists $0<\alpha_\star<1$ such that $r_2(\alpha_\star)<1$.
Then the result clearly holds with $r=\max\{r_1(\alpha_\star),r_2(\alpha_\star)\}$, noting that this quantity does not depend on~$p$, $\bm X_p$, or~$\bm\beta_{0,p}$.
\end{proof}

\begin{proof}[%
Proof of Theorem~\ref{thm:tv-dd}%
]
Begin by writing the Gibbs sampler in~(\ref{gibbs-dd}) as (for every $k\ge1$)
\bas
\bm\beta_k&=\tilde{\bm\beta}+\sqrt{\sigma^2_{k-1}}\;\bm A^{-1/2}\bm Z_k,
&&\text{where }\bm Z_k\sim N_p(\bm0_p,\bm I_p),\\
\sigma^2_k&=\frac1{V_k}\left[\left(\bm\beta_{k}-\tilde{\bm\beta}\right)^T\bm A\left(\bm\beta_{k}-\tilde{\bm\beta}\right)+C+s\right],
&&\text{where }V_k\sim\chi^2_{n+p+\lceil p\epsilon\rceil},
\eas
and where all of the $\bm Z_k$ and $V_k$ are independent.
Substituting for $\bm\beta_k$ yields
\basn
\sigma^2_k=\frac1{V_k}\left(\sigma^2_{k-1}U_k+C+s\right)
{
,
}
&&\text{ where }U_k\sim\chi^2_p,\;V_k\sim\chi^2_{n+p+\lceil p\epsilon\rceil},
\easn
and where the $U_k$ and $V_k$ are all independent.
This marginal chain is the same as the marginal chain in~(\ref{marginal-rr}) of the standard Bayesian regression Gibbs sampler, except with the degrees of freedom of $V_k$ changed from $n+p$ to $n+p+\lceil p\epsilon\rceil$.
Thus, 
the proof is essentially identical to that of Theorem~\ref{thm:tv-rr}.
\end{proof}

\section{%
Summary of Convergence Results
}
\label{sec:supp-table}

\begin{center}
\begin{tabular}{lc@{}cc@{}c}
\toprule
\multicolumn{1}{c}{Family}&Model&Gibbs Steps&Dimension&Convergence Rate\\\midrule[\heavyrulewidth]
&Standard&2&$p$, $1$&$=p/(n+p-2)$\\\cmidrule{2-5}
\multirow{2}{*}[-3.5pt]{Regression}&Independent-Prior&2&$p$, $1$&does not depend on~$p$\\\cmidrule{2-5}
&
Dimensionally-Dependent
&
2
&
$p$, $1$
&
$=p/(n+p+\lceil p\epsilon\rceil-2)$
\\\cmidrule{2-5}
&Lasso-Type&3&$p$, $1$, $p$&$\approx p/(n+p-2)$\\\midrule[\heavyrulewidth]
Location&Location&2&$np$, $1$&$\approx p/(np+p-2)$\\\midrule[\heavyrulewidth]
&Known Variances&2&$np$, $p$&$=\sigma^2/(\sigma^2+\tau^2)\nrightarrow1$\\\cmidrule{2-5}
Hierarchical
&Unknown Variances&4&$np$, $p$, $1$, $1$&$\to1$ as $n,p\to\infty$\\\cmidrule{2-5}
&
Unknown Variances,
&
\multirow{2}{*}{4}
&
\multirow{2}{*}{$np$, $p$, $1$, $1$}
&
\multirow{2}{*}{$\nrightarrow1$ as $n,p\to\infty$}
\\
&
Dimensionally-Dependent
&&&\\\bottomrule
\end{tabular}
\end{center}

\section{%
Details of Numerical Results
}
\label{sec:supp-numerical-details}

Each point in the plots of
Figures~\ref{fig:autocorr}~and~\ref{fig:autocorr-nhuv}
represents the average lag-one autocorrelation over
10 Gibbs sampling runs of 10,000 iterations each.
The quantities, vectors, and matrices used in each model are described separately below.

For the regression-type models in Figure~\ref{fig:autocorr}, chains were executed for each combination of values of $n\in\{10,30,100\}$ and $p\in\{10,30,100\}$.  For each of the 10~runs at each $n$ and $p$ setting, the $np$ elements of the $n\times p$ covariate matrix~$\bm X$ were drawn as independent $N(0,1)$ random variables.  Also, for each run, the $n\times1$ response vector~$\bm Y$ was generated as
$\bm Y=\bm X\bm\beta_\star+\bm\epsilon$,
where $\bm\beta_\star$ is a $p\times1$ vector with its first $p/2$ elements drawn independently as $\pm1$ with probability $1/2$ each and its remaining $p/2$ elements set to zero, and where $\bm\epsilon$ is an $n\times1$ vector of independent $t_4$ random variables multiplied by $1/2$.
The initial values were set as $\bm\beta_0=\bm1_p$ and $\sigma^2_0=1$.
For the Bayesian lasso, the regularization parameter~$\lambda$ was set to $\lambda=1$.  
For the elastic net, both regularization parameters~$\lambda_1$ and $\lambda_2$ were set to $\lambda_1=\lambda_2=1$.
The spike-and-slab prior used
$\zeta_j=1/n$
and $\kappa_j=10$ for all $j\in\{1,\ldots,p\}$.

For the hierarchical models in
the left side of
Figure~\ref{fig:autocorr-nhuv}, chains were executed for each combination of values of $n\in\{10,30,60,100,150,210\}$ and $p\in\{3,10,30,100,300\}$.
For the hierarchical models in
the center and right side of
Figure~\ref{fig:autocorr-nhuv}, chains were executed for each combination of values of $n\in\{5,15,25,35,45\}$ and $p\in\{5,15,25,35,45\}$.
For each of the
10~runs
at each $n$ and $p$ setting, the $np$ elements of the matrix~$\bm X$ were drawn as independent $t_4$ random variables multiplied by~$1/2$.
The initial values were set as $\bm\mu_0=\bm1_p$ and $\sigma^2_0=\tau^2_0=1$.

\end{document}